\declaretheorem[name=Lemma, numberwithin = section]{lemma}
\declaretheorem[name=Theorem,sibling = lemma]{theorem}
\declaretheorem[name=Proposition, sibling=lemma]{proposition}
\declaretheorem[name=Corollary, sibling=lemma]{corollary}
\declaretheorem[name=Conjecture, sibling=lemma]{conjecture}
\declaretheorem[name=Question, sibling=lemma]{question}
\declaretheorem[name=Claim]{claim}
\crefname{claim}{Claim}{Claims}
\crefname{lemma}{Lemma}{Lemmas}
\crefname{theorem}{Theorem}{Theorems}
\crefname{proposition}{Proposition}{Propositions}
\crefname{question}{Question}{Questions}
\crefname{conjecture}{Conjecture}{Conjectures}
\crefname{figure}{Figure}{Figures}
\crefname{corollary}{Corollary}{Corollaries} 
\newenvironment{subproof}[1][\proofname]{%
  \begin{proof}[#1]%
}{%
  \end{proof}%
}
\newcommand\abs[1]{\lvert#1\rvert}
\renewcommand{\epsilon}{\varepsilon}  
\newcommand{\Beta}{\mathcal B}
\DeclareMathOperator{\ord}{ord}
\DeclareMathOperator{\tw}{tw}
\title{Linear bounds on treewidth \\
in terms of excluded planar minors} 
\author{J. Pascal Gollin\thanks{Supported by the Institute for Basic Science (IBS-R029-Y3).}}
\author{Kevin Hendrey\thanks{Supported by the Institute for Basic Science (IBS-R029-C1).}}
\author[$\dagger$1,2]{Sang-il Oum}
\affil[1]{Discrete Mathematics Group,
Institute for Basic Science (IBS),
Daejeon,~South~Korea}
\affil[2]{Department of Mathematical Sciences, KAIST, Daejeon,~South~Korea}
\author[3]{Bruce~Reed\thanks{Supported by  NSTC Grant 112-2115-M-001 -013 -MY3}}
\affil[3]{Mathematical Institute, Academia Sinica, Taiwan.} 
\affil[ ]{Email: 
\texttt{pascalgollin@ibs.re.kr},
\texttt{kevinhendrey@ibs.re.kr},
\texttt{sangil@ibs.re.kr}, \texttt{bruce.al.reed@gmail.com}}
\date{February 27, 2024}
\begin{document}
\maketitle

\begin{abstract} 
    One of the fundamental results in graph minor theory is that for every planar graph~$H$,
    there is a minimum integer~$f(H)$ such that graphs with no minor isomorphic to~$H$ have treewidth at most~$f(H)$. 
    A lower bound for~${f(H)}$ can be obtained by considering the maximum integer~$k$ such that~$H$ contains~$k$ vertex-disjoint cycles. 
    There exists a graph of treewidth~${\Omega(k\log k)}$ which does not contain~$k$ vertex-disjoint cycles, from which it follows that~${f(H) = \Omega(k\log k)}$. 
    In particular, if~${f(H)}$ is linear in~${\abs{V(H)}}$ for graphs~$H$ from a subclass of planar graphs, it is necessary that $n$-vertex graphs from the class contain at most~${O(n/\log(n))}$ vertex-disjoint cycles. 
    We ask whether this is also a sufficient condition, and demonstrate that this is true for classes of planar graphs with bounded component size. 
    For an $n$-vertex graph~$H$ which is a disjoint union of~$r$ cycles, we show that~${f(H) \leq 3n/2 + O(r^2 \log r)}$, and improve this to~${f(H) \leq n + O(\sqrt{n})}$ when~${r = 2}$. 
    In particular this bound is linear when~${r=O(\sqrt{n}/\log(n))}$. 
    We present a linear bound for~${f(H)}$ when~$H$ is a subdivision of an $r$-edge planar graph for any constant~$r$.
    We also improve the best known bounds for~${f(H)}$ when~$H$ is the wheel graph or the ${4 \times 4}$ grid, obtaining a bound of~$160$ for the latter.
\end{abstract}

 \section{Introduction}

A \emph{tree decomposition} of a graph~$G$ consists of a tree~$T$ and a subtree~$S_v$ of~$T$ for each vertex~$v$ of~$G$.
For each node~$t$ of the tree~$T$, we let~${X_t = \{v \mid t \in V(S_v)\}}$ 
and define the \emph{width} of the tree decomposition as the maximum of~${\abs{X_t}-1}$ over the nodes~$t$ of the tree. 
The \emph{treewidth} of~$G$, denoted by~$\tw(G)$, is the minimum of the widths of its tree decompositions. 

One of the fundamental results in graph minor theory, proved by Robertson and Seymour~\cite{RS1991}, is that for every planar graph~$H$,
there is a minimum integer~$f(H)$ such that graphs which do not contain~$H$ as a minor have treewidth at most~$f(H)$. 
When~$H$ is a ${k \times k}$ grid, the best known upper bound on~$f(H)$, obtained by Chuzhoy and Tan~\cite{CT2020}, is~${O(k^9 \operatorname{poly~log} k)}$. 
This implies that~${f(H) = O(\abs{V(H)}^9 \operatorname{poly~log} \abs{V(H)})}$ for arbitrary planar graphs~$H$, as Robertson, Seymour, and Thomas~\cite{RST1994} proved that every planar~$H$ is a minor of 
a ${k \times k}$ grid for~${k = O(\abs{V(H)})}$. 

It is natural to ask for a better bound on~$f(H)$ for~$H$ in various classes of planar graphs. 
Any bound must be~${\Omega(\abs{V(H)})}$ as the complete graph on~${\abs{V(H)}-1}$ vertices has treewidth~${\abs{V(H)}-2}$ and does not contain~$H$ as a minor. 
This paper focuses on~$H$ for which~$f(H)$ is~${O(\abs{V(H)})}$. 

Several authors have presented results showing that~${f(H) = O(\abs{V(H)})}$ for various special~$H$. 
Bienstock, Robertson, Seymour, and Thomas~\cite{BRST1991} showed that when~$H$ is a forest, $f(H)$ is~${\abs{V(H)}-2}$. 
Fellows and Langston~\cite{FL1989} showed that if~$H$ is a cycle, then~$f(H)$ is again~${\abs{V(H)}-2}$. 
Bodlaender, van Leeuwen,  Tan, and Thilikos~\cite{BLTT1997} showed that~${f(K_{2,t}) \leq 2t-2}$ for every integer~${t \geq 2}$. 
Raymond and Thilikos~\cite[Theorem 5.1]{RT2017a} proved that 
\begin{equation} 
    f(H) \leq 36 \abs{V(H)} - 39 \label{eq:wheel}
\end{equation}
for every wheel graph $H$. 
Leaf and Seymour~\cite[4.4]{LS2015} proved that for an apex forest $H$ with at least two vertices, which is a graph that becomes a forest by deleting some vertex, ${f(H) \leq \frac{3}{2} \abs{V(H)} - 3}$.
Liu and Yoo~\cite{LY2024} informed us that, in a manuscript under preparation, they proved $f(H)\leq \abs{V(H)}-2$ for an apex forest~$H$, improving the bound of Leaf and Seymour. 

Not every planar graph~$H$ has the property that~${f(H) = O(\abs{V(H)})}$. 
Robertson, Seymour, and Thomas~\cite{RST1994} pointed out that~${f(H) = \Omega(g^2 \log g)}$ for the ${g \times g}$ grid~$H$. 
For this, they use a probabilistic argument to show that for some fixed positive~$\epsilon$ and every sufficiently large integer~$n$, 
there are $n$-vertex graphs~$G$ with treewidth exceeding~$\epsilon n$ and girth at least~${\epsilon \log n}$. 
Thus for large~$g$, if we choose~${n = \lceil \frac{1}{9}\epsilon g^2 \log g \rceil}$, then~$G$ has no~${g \times g}$ grid~$H$ as a minor because $H$ contains~${\lfloor g^2/9\rfloor }$ vertex-disjoint cycles
so~${f(H) \geq \tw(G) \geq \epsilon n \geq \frac{1}{9}\epsilon^2 g^2 \log g}$. 

As was implicitly pointed out by Cames van Batenburg, Huynh, Joret, and Raymond~\cite{CHJR2019}, 
by the same method, we deduce the following generalization. 
\begin{proposition}
    \label{prop:fewcycles}
    For every~$c>0$, there is~$d>0$ such that for every graph $H$ with at least two vertices, if~${f(H) \leq c\abs{V(H)}}$, then~$H$ contains at most~${\frac{d\abs{V(H)}}{\log \abs{V(H)}}}$ vertex-disjoint cycles. 
\end{proposition}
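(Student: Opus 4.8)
The plan is to run the Robertson--Seymour--Thomas probabilistic argument quantitatively, exactly as it is carried out for grids in the paragraph preceding the statement, but now choosing the size of the host graph optimally in terms of the number of vertex-disjoint cycles of~$H$. Write $m := \abs{V(H)}$ and let $k$ be the maximum number of vertex-disjoint cycles in~$H$. We may assume $k \geq 1$ (otherwise the conclusion is trivial) and that $f(H)$ is finite (otherwise the hypothesis $f(H) \leq cm$ fails and there is nothing to prove). It is cleanest to prove the contrapositive: we will choose $d = d(c) > 0$ and show that if $k > dm/\log m$ then $f(H) > cm$, by exhibiting a graph $G$ with no $H$-minor and $\tw(G) > cm$.

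The first ingredient is the elementary observation that makes large girth an obstruction to containing~$H$: if $G$ contains $H$ as a minor, then $G$ contains $k$ vertex-disjoint cycles. Indeed, fix a minor model consisting of pairwise disjoint connected branch sets $(B_v)_{v \in V(H)}$ together with one edge of $G$ joining $B_u$ to $B_v$ for each edge $uv$ of~$H$, and fix vertex-disjoint cycles $C_1, \dots, C_k$ of~$H$. For each~$i$, the union of the branch sets $B_v$ over $v \in V(C_i)$ together with the chosen connecting edges for the edges of $C_i$ is a connected subgraph of $G$ that contracts onto $C_i$, hence contains a cycle; and for distinct~$i$ these subgraphs use disjoint collections of branch sets, so they are vertex-disjoint in~$G$. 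Consequently, if $G$ has $n$ vertices and girth at least~$\gamma$, then $k$ vertex-disjoint cycles of $G$ occupy at least $k\gamma$ vertices, so $G$ has fewer than $k$ vertex-disjoint cycles — and therefore no $H$-minor — whenever $n < k\gamma$.

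Now invoke the probabilistic construction of Robertson, Seymour, and Thomas~\cite{RST1994} quoted above: there exist $\epsilon > 0$ and $n_0$ such that for every integer $n \geq n_0$ there is an $n$-vertex graph $G_n$ with $\tw(G_n) > \epsilon n$ and girth at least $\epsilon \log n$. Given $H$ with $k > dm/\log m$ and with $m$ at least some constant depending on $c$ and~$\epsilon$, set $n := \lceil cm/\epsilon \rceil + 1$, so that $\epsilon n > cm$ and $n \geq n_0$. Since $n \leq (c/\epsilon + 1)m$ for $m \geq 2$ and $\log n \geq \tfrac{1}{2}\log m$ once $m$ is large enough, we get $n/(\epsilon \log n) \leq \tfrac{2}{\epsilon}(c/\epsilon + 1)\, m/\log m < k$, provided $d$ is chosen larger than $\tfrac{2}{\epsilon}(c/\epsilon + 1)$; hence $n < \epsilon k \log n \leq k\gamma$, where $\gamma$ is the girth of~$G_n$. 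By the previous paragraph $G_n$ has no $H$-minor, so $f(H) \geq \tw(G_n) > \epsilon n > cm$, a contradiction. Finally, there are only finitely many graphs $H$ whose number of vertices lies below the constant threshold, and each of them has at most $m$ vertex-disjoint cycles, so enlarging $d$ if necessary makes $k \leq dm/\log m$ hold for every graph $H$ with at least two vertices.

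I do not anticipate a genuine obstacle; the only care needed is the parameter bookkeeping. To match the factor $\log m$ in the target bound one must push the host graph to $n = \Theta(m)$ vertices, rather than $n = \Theta(m\log m)$ as in the grid case, and then verify that $G_n$ still fails to contain~$H$: this is exactly where girth $\epsilon\log n = \Theta(\log m)$ combines with $k = \omega(m/\log m)$ to force more than $n$ vertices among the $k$ disjoint cycles. Everything else is the argument already used for grids, with the constants tracked through; it may help to isolate the ``minor implies many vertex-disjoint cycles'' observation and the quantitative form of the construction of~\cite{RST1994} as separate statements to keep the bookkeeping transparent.
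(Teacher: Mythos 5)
Your proof is correct and takes essentially the same approach as the paper: you use the Robertson--Seymour--Thomas high-girth, high-treewidth construction with a host graph of size $n=\Theta(m)$, together with the observation that a minor cannot have more vertex-disjoint cycles than its host. The only stylistic difference is that you argue the contrapositive (many disjoint cycles forces $f(H)>cm$) and make the minor-to-disjoint-cycles step explicit, whereas the paper assumes $f(H)\leq cm$, deduces that the constructed $G$ contains $H$ as a minor, and then bounds the cycle packing number of $H$ by that of $G$.
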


\begin{proof}
    There is~${\epsilon \in (0,1)}$ and an integer~${n_0 > 1}$ such that for all integers~${n \geq n_0}$, there is an $n$-vertex graph~$G$ with~${\tw(G) > \epsilon n}$ and girth at least~${\epsilon \log n}$.
    We may assume that~${c > 1}$. 
    We set~${d := \max(\frac32 c\epsilon^{-2}, \lceil \log n_0 \rceil)}$. 
    Since~$H$ has at most $\abs{V(H)}$ vertex-disjoint cycles, 
    we may assume that~${\abs{V(H)} \geq n_0}$. 

    Let~${n := \lceil c\epsilon^{-1}\abs{V(H)} \rceil\ge n_0}$. 
    Let $G$ be an $n$-vertex graph which has treewidth more than~${\varepsilon n \geq c\abs{V(H)}}$ and girth at least~${\epsilon \log n}$. 
    Then~$G$ has at most~${n/(\epsilon \log n)}$ vertex-disjoint cycles.
    Since~$H$ is a minor of~$G$, the maximum number of vertex-disjoint cycles in~$H$ is at most
    \[
        \frac{n}{\epsilon \log n} 
        \leq \frac{\frac32 c\varepsilon^{-1} \abs{V(H)} }{\epsilon\log n}
        \leq \frac{d\abs{V(H)}}{\log \abs{V(H)}}.\qedhere
    \]     
\end{proof}

Our first theorem is a partial converse to this result. 

\begin{restatable}{theorem}{smallcomponents}
    \label{smallcomponents}
    Let~$r$ be a fixed positive integer and~$H$ be a planar graph with at least two vertices. 
    If every component of~$H$ is a tree or has at most~$r$ vertices, then~${f(H) = O(\abs{V(H)})}$ 
    precisely if~$H$ has at most~${O(\frac{\abs{V(H)}}{\log \abs{V(H)}})}$ components having cycles. 
\end{restatable}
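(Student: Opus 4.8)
The statement is an equivalence, and one direction is immediate from \cref{prop:fewcycles}: if $f(H)\le c\abs{V(H)}$ for all such $H$, then every such $H$ has $O(\abs{V(H)}/\log\abs{V(H)})$ vertex-disjoint cycles, and since distinct components with cycles contribute distinct such cycles, at most that many components of $H$ have a cycle. So only the converse needs an argument; fix $r$, put $n:=\abs{V(H)}$, and assume $H$ has at most $dn/\log n$ components with cycles for a constant $d$.

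First I would reduce to a clean normal form. Write $H=F\sqcup D$, where $F$ is the union of the acyclic components (a forest on $n_1\le n$ vertices) and $D$ is the union of the at most $q:=\lceil dn/\log n\rceil$ components that contain a cycle, each on at most $r$ vertices. By \cite{RST1994}, every planar graph on at most $r$ vertices is a minor of a single fixed grid $W_r$ of side $O(r)$; and joining the components of $F$ into one tree $T$ by adding edges only enlarges the target. Hence $H$ is a minor of $T\sqcup(q\cdot W_r)$ (here $q\cdot W_r$ denotes $q$ disjoint copies of $W_r$, and $F$ is a subgraph of $T$ while $D$ is a minor of $q\cdot W_r$), so $f(H)\le f\bigl(T\sqcup(q\cdot W_r)\bigr)$. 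Since $q\log q=O(n)$ and $n_1\le n$, it therefore suffices to prove $f\bigl(T\sqcup(q\cdot W_r)\bigr)=O_r(n)$ for an arbitrary tree $T$ on at most $n$ vertices.

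Two ingredients feed into this: (a) Bienstock--Robertson--Seymour--Thomas~\cite{BRST1991}, that $\tw(G')\ge\abs{V(T)}-1$ implies $G'\succeq T$; and (b) the Erd\H{o}s--P\'osa theorem for the fixed planar graph $W_r$, which (via its $O_r(q\log q)$ hitting-set bound together with $f(W_r)=O_r(1)$) yields a constant $c_r$ such that $\tw(G)\ge c_r q\log q$ forces $q$ disjoint $W_r$-minors in $G$. I would combine them through the following treewidth-preserving strengthening of (b): \emph{if $\tw(G)\ge c_r q\log q+w$, then $G$ has $q$ disjoint $W_r$-minors $M_1,\dots,M_q$ with $\tw\bigl(G-\bigcup_i V(M_i)\bigr)\ge w$.} Applying this with $w=n_1-1$ and then (a) to $G-\bigcup_i V(M_i)$ yields a $T$-minor disjoint from $M_1\sqcup\cdots\sqcup M_q$, hence a $\bigl(T\sqcup(q\cdot W_r)\bigr)$-minor of $G$; so if $G$ avoids that minor then $\tw(G)<c_r q\log q+n_1=O_r(n)$, and $f(H)=O_r(n)$ follows.

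The hard part is the treewidth-preserving strengthening. I would prove it by induction on $q$, extracting one $W_r$-minor at a time while paying only $O_r(\log q)$ in treewidth per step. The difficulty is that one cannot control the number of vertices of a $W_r$-minor model: a graph of arbitrarily large treewidth can have all of its $W_r$-minor models arbitrarily large, for instance if one replaces every vertex of a large wall by a long path. What has to be shown is that, relative to a bramble of $G$ of maximum order, a $W_r$-minor can be chosen so that its model meets only $O_r(1)$ of the bramble's elements; then a sub-bramble of order $\tw(G)+1-O_r(1)$ survives in the complement of the model and certifies that the treewidth fell by only $O_r(1)$. I expect this localization to be the technically demanding step, and it is essential that each extracted minor costs only a bounded amount of treewidth: an argument that first extracts a large grid minor of $G$ would incur a polynomial loss and wreck the linear bound, so one must work directly with brambles (or tangles) and the exact result~\cite{BRST1991}.
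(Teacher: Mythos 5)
Your reduction and the easy direction are fine, and the normal form $T\sqcup(q\cdot W_r)$ is a sensible target. But the converse direction hinges entirely on the ``treewidth-preserving strengthening'' you state in italics, and you never prove it --- you explicitly flag it as the technically demanding step. That is a genuine gap, not a deferred detail: it is precisely the content of the theorem. Your sketch (find a $W_r$-model meeting only $O_r(1)$ elements of a max-order bramble) is plausible but far from obvious; a minimal $W_r$-model can still be forced to sprawl across the graph, so the claim that one can always localize the model relative to the bramble needs a real argument, and you give none.

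The paper avoids the need for any such statement by reorganizing the proof so that ``what's left over after extraction'' never has to be controlled. The tree components are peeled off first, one at a time, via Diestel's separation lemma (\cref{diestel}, packaged as \cref{removingtree}): if $T$ is a tree component, then $f(H)\le f(H-V(T))+\abs{V(T)}$, because Diestel's lemma gives a separation $(A,B)$ with $\abs{A\cap B}=\abs{V(T)}$, a $T$-minor inside $G[A]$, and a path decomposition of $G[A]$ ending in the separator; one glues this onto a tree decomposition of $G-A$. This charges $\abs{V(T)}$ per tree component and never touches a bramble. What remains is the graph $H'$ of cycle-containing components, and here the paper cites Cames van Batenburg--Huynh--Joret--Raymond~\cite[Corollary~2.2]{CHJR2019} (\cref{eplem}): for every $r$ there is $s$ such that treewidth $\ge sk\log(k+1)$ forces $k$ vertex-disjoint subgraphs each of treewidth $\ge r$. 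Taking $R:=\max\{f(J):J$ planar, $\abs{V(J)}\le r\}$ and applying this with $k$ the number of cycle-components immediately gives $f(H')<sk\log(k+1)=O(\abs{V(H)})$. No control over the complement's treewidth is ever required, because the tree part was already handled by the separation argument rather than by searching for a model in the leftover graph.

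So the structural difference is this: you try to extract models for all of $H$ from a single bramble, which forces you to invent a new ``bramble-preserving extraction'' lemma; the paper handles the forest part and the cyclic part by two different mechanisms that compose additively, and each mechanism is an off-the-shelf citation. If you want to salvage your route you would essentially have to reprove something equivalent to \cref{eplem}, and there is no reason to do so when it is already available. I would also note a small internal tension in your sketch: you first say you pay $O_r(\log q)$ treewidth per extraction step, and then that the model should hit only $O_r(1)$ bramble elements; these are different budgets, and you should decide which one you are actually aiming for before attempting the induction.
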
 

Our second result shows that~$f(H)$ is~$O(\abs{V(H)})$ whenever~$G$ is the disjoint union of~${O(\frac{\sqrt{\abs{V(H)}}}{\log \abs{V(H)}})}$ disjoint cycles: 

\begin{restatable}{theorem}{fewcycles}
    \label{fewcycles}
    There is an absolute constant~$c$ such that for every~${r \geq 3}$, 
    if~$H$ is the disjoint union of~$r$ cycles, then 
    \[
        {f(H) \leq \frac{3\abs{V(H)}}{2} + cr^2 \log r}.
    \]
    If~$H$ is the disjoint union of two cycles, then 
    \[
        {f(H) < \abs{V(H)} + \frac{9}{2} \left\lceil \sqrt{4 + \abs{V(H)}} \right\rceil + 2}.
    \]
\end{restatable}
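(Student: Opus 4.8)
The plan is to reduce both statements to a packing problem for subgraphs of prescribed treewidth. Write $H = C_{\ell_1}\cup\dots\cup C_{\ell_r}$ with $\ell_1\ge\dots\ge\ell_r\ge 3$ and $n=\sum_i\ell_i$. If $S_1,\dots,S_r$ are pairwise disjoint vertex sets of $G$ with $\tw(G[S_i])\ge\ell_i-1$ for every $i$, then by the theorem of Fellows and Langston ($f(C_\ell)=\ell-2$) each $G[S_i]$ contains a cycle of length at least $\ell_i$; contracting that cycle, inside $S_i$, down to length exactly $\ell_i$ produces pairwise disjoint copies of $C_{\ell_1},\dots,C_{\ell_r}$, so $H\preceq G$. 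Thus it suffices to produce such a family, and the whole difficulty is doing so economically in terms of treewidth. I would separate the cycles of $H$ into a ``long'' regime (the large $\ell_i$, of which there are at most $n/\ell_r\le r$) and a ``short'' regime, treated by different mechanisms.

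The short cycles are cheap once enough treewidth is left over: by the Erd\H{o}s--P\'osa theorem for cycles, a graph of treewidth $\Omega(k\log k)$ contains $k$ vertex-disjoint cycles --- complementary to the high-girth, high-treewidth construction recalled before \cref{prop:fewcycles} --- and moreover, as soon as some $\ell_i$ falls below the girth of the leftover graph, \emph{any} disjoint cycle found there contracts down to $C_{\ell_i}$. So once the pieces handling the long cycles have been removed, the leftover treewidth need only be $\Omega(r\log r)$; accounting for this over the up-to-$r$ peeling steps is what produces the $O(r^2\log r)$ surplus of the first bound.

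For the first bound I would then run a greedy peeling along the recursion $H\mapsto H-C_1$. With a suitable treewidth budget, one seeks a cycle $D$ with $\ell_1\le |V(D)|\le\tw(G)-\bigl(\tfrac32(n-\ell_1)+c(r-1)^2\log(r-1)\bigr)$; since $\tw(G-V(D))\ge\tw(G)-|V(D)|$, deleting $V(D)$, contracting it to $C_{\ell_1}$, and recursing closes the induction. Fellows--Langston supplies a cycle of length at least $\ell_1$; the real point is to find one that is also short --- lying in a window of width $\tfrac12\ell_1+O(r\log r)$ above $\ell_1$. A shortest cycle $D^\ast$ of length at least $\ell_1$ is induced once $|V(D^\ast)|\ge 2\ell_1-3$ (split off any chord), and inflating every bag of a tree decomposition of $G-V(D^\ast)$ by $V(D^\ast)$ shows $\tw(G)\le|V(D^\ast)|+\tw(G-V(D^\ast))$, so if $D^\ast$ is long then $G-V(D^\ast)$ itself has large treewidth and we revert to the short-cycle regime. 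I expect the bookkeeping here --- threading the treewidth budget through the recursion so that each long cycle costs only $\tfrac32$ of its length while the $O(r^2\log r)$ term absorbs everything else, including the slack in ``induced once $|V(D^\ast)|\ge 2\ell_1-3$'' --- to be the main obstacle.

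For the sharper two-cycle bound ($r=2$, $\ell_1\le\ell_2$) I would argue directly. If $\tw(G)\ge n+\tfrac92\lceil\sqrt{4+n}\rceil+2$, then Fellows--Langston gives a cycle $C$ with $|V(C)|\ge\tw(G)+1\ge n+\Omega(\sqrt n)$. Since $G$ has treewidth far exceeding what a cycle attached to the rest of $G$ only through a sparse system of chords and bridges could support, $C$ must carry a rich family of chords (or else a bridge of $C$ is itself complex enough to recurse into). The goal is to locate two ``parallel'' chords of $C$ --- or two short handles routed through $G-V(C)$ --- cutting off two vertex-disjoint arcs of lengths at least $\ell_1-1$ and at least $\ell_2-1$: each arc together with its chord is then a cycle of the required length, and the two are disjoint. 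The two arcs use at least $n-2$ of the at least $n+\Omega(\sqrt n)$ vertices of $C$, leaving $\Omega(\sqrt n)$ of slack for the gap between them; the square root enters because one finds the chords by a pigeonhole/averaging argument over $\Theta(\sqrt n)$ ``slots'' around $C$, and the constant $\tfrac92$ together with the ceiling is the slack that argument wastes. I expect the main difficulty to be exactly the extraction of a near-longest cycle whose chord structure is usable --- ruling out configurations in which every chord of every near-longest cycle is nested or crossing in a way forbidding two disjoint derived cycles --- after which the remaining estimates, and the precise form $n+\tfrac92\lceil\sqrt{4+n}\rceil+2$, are routine.
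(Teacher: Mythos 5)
Your proposal identifies the right high-level ingredients for the $r\geq 3$ case (a peeling recursion plus an application of the Erd\H{o}s--P\'osa theorem, with the $O(r^2\log r)$ term accumulating over $r$ steps), but the mechanism you propose for carrying out a single peeling step has a genuine gap, and in one place a logical error. You want to find a shortest cycle~$D^*$ of length at least~$\ell_1$, and argue that either it is short enough to delete and recurse, or, ``if $D^*$ is long then $G-V(D^*)$ itself has large treewidth and we revert to the short-cycle regime.'' The inequality $\tw(G)\leq |V(D^*)|+\tw(G-V(D^*))$ gives a \emph{weaker} lower bound on $\tw(G-V(D^*))$ as $|V(D^*)|$ grows, so this dichotomy does not close. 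More fundamentally, nothing in the Fellows--Langston theorem bounds from above the length of a shortest cycle of length at least $\ell_1$, and the ``window of width $\tfrac12\ell_1 + O(r\log r)$'' you need is exactly what is not supplied. The paper obtains the $\tfrac32$ factor by a quite different mechanism: it takes a single cycle~$C$ that hits a maximum-order bramble (\cref{cyclebramble}), partitions $C$ into arcs $P,P_1,P_2$ whose associated subbrambles have prescribed orders (\cref{lem:pathpartition}), and applies \cref{menger} to obtain $\Omega(r\log r)$ vertex-disjoint $P_1$--$P_2$ paths in $G-V(P)$. If some path is short (at most $\tfrac12 c_1-2$ internal vertices), it closes $P$ into a cycle~$O$ of length at least $c_1$ whose deletion costs only $\tfrac32 c_1 + O(r\log r)$ in bramble order --- crucially, this cost bound comes from bramble bookkeeping, not from a bound on $|V(O)|$. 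If every path is long, the subcubic graph formed by $P_1\cup P_2$ and the paths has girth at least $c_1$ and, by an edge count plus Erd\H{o}s--P\'osa, contains $r$ disjoint cycles all of length at least $c_1$, finishing immediately.

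For the $r=2$ case your approach (pigeonhole over chords and handles of a near-longest cycle) is genuinely different from the paper's, and the step you flag as the ``main difficulty'' --- ruling out configurations in which every chord of every near-longest cycle is nested or crossing badly --- is precisely the step you do not supply. The paper avoids this entirely via Seymour's two-disjoint-paths theorem, packaged as \cref{twisted-or-planar}: one either finds two vertex-disjoint cycles, each hitting a subbramble of order at least $\ell_i$ (hence of length at least $\ell_i$), or one obtains a planar minor whose treewidth is large enough that \cref{planartw} yields a $g\times g$ grid with $g = 2 + \lceil\sqrt{4+\ell}\,\rceil$, inside which two disjoint cycles of lengths exceeding $\ell_1$ and $\ell_2$ are read off from disjoint bands of rows. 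The $\sqrt{n}$ and the constant $\tfrac92$ come from the planar grid-minor theorem, not from an averaging argument over chord slots. Without a concrete argument replacing \cref{twisted-or-planar}, your chord-based route remains a sketch of a conjecture rather than a proof.
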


Our third result shows that~$f(H)$ is~$O(\abs{V(H)})$ whenever~$G$ is the subdivision of a planar graph with~$O(1)$ edges. 

\begin{restatable}{theorem}{outerplanar}
    \label{outerplanar}
    For every integer~${r \geq 2}$, there is a constant~$b_r$ such that if~$H$ is a subdivision of a planar graph with at most~$r$ edges, then~${f(H) \leq \frac{r+1}{2} \abs{V(H)} + b_r}$. 
\end{restatable}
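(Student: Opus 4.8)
The plan is to prove the contrapositive: there is a constant $b_r$ such that if $\tw(G) > \frac{r+1}{2}\abs{V(H)} + b_r$ then $G$ contains $H$ as a minor.

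\textbf{Set-up.} Let $H_0$ be the graph obtained from $H$ by suppressing every vertex of degree two; then $H_0$ is planar with at most $r$ edges and at most $2r$ vertices of positive degree, and $H$ is recovered from $H_0$ by replacing each edge $e=uv$ with an internally disjoint path $P_e$ of some length $\ell_e\ge 1$ (having $\ell_e-1$ interior vertices). Write $L:=\sum_e\ell_e=\abs{E(H)}$; since subdividing an edge adds one vertex and one edge, $\abs{V(H)}=\abs{V(H_0)}-\abs{E(H_0)}+L$, hence $L\le\abs{V(H)}+r$. Isolated vertices of $H$ are harmless since $\abs{V(G)}>\tw(G)$ is large, and graphs $H$ with few vertices can be absorbed into $b_r$. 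Thus it suffices to exhibit in $G$ pairwise disjoint connected subgraphs $B_v$ ($v\in V(H_0)$) and internally disjoint paths $R_e$ ($e\in E(H_0)$) such that each $R_e$ has its ends in $B_u$ and $B_v$ (for $e=uv$), is internally disjoint from every $B_w$, and has at least $\ell_e$ interior vertices; contracting each $B_v$ and partitioning the interior of each $R_e$ into $\ell_e-1$ connected pieces then yields a model of $H$.

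\textbf{Host structure.} Since a graph with no path on $k$ vertices has treewidth at most $k-2$, and $\tw(G)$ is large, $G$ contains a path $Q$ on more than $\tw(G)>\frac{r+1}{2}\abs{V(H)}+b_r$ vertices. The feature forced by the demand of a \emph{linear} bound is that we cannot afford to pass to a grid or wall minor, which costs a super-linear factor; instead I would work directly in $G$, using additionally that large treewidth produces a well-linked set $W$ of size $\Omega(\tw(G))$, which I want located inside a cohesive part of $G$ — say a bag of a lean tree-decomposition of $G$ together with the disjoint paths joining it to $W$ — and of size large compared with $r$. Well-linkedness then supplies, on demand, the connecting paths needed below, as long as only $O(r)$ of its vertices have been spent.

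\textbf{Routing.} First carve $Q$ into $\abs{E(H_0)}$ consecutive disjoint subpaths, one per edge $e$ of $H_0$ and with at least $\ell_e$ vertices; this is possible because $L\le\abs{V(H)}+r$, which for $\abs{V(H)}$ large lies well below the length of $Q$. These carved subpaths will be the interiors of the $R_e$. It then remains to place the at most $2r$ branch sets $B_v$ (each near the relevant subpath ends along $Q$ when possible) and to join the two ends of each carved subpath to the appropriate $B_u,B_v$ by connectors that are pairwise disjoint and avoid everything already chosen; each connector comes from a single use of the well-linkedness of $W$ after deleting the $O(r)$ objects routed so far. The quantitative heart of the argument is to show that, although any one connector could a priori be long, the total length used by the $O(r)$ connectors together with the $O(r^2)$ vertices in the branch sets fits inside the surplus $\tw(G)-L>\frac{r-1}{2}\abs{V(H)}+b_r-r$; this is precisely where the coefficient $\frac{r+1}{2}$ is spent — the bulk of $H$ (a subdivided graph) is carried by one long path of length $L\approx\abs{V(H)}$, and an essentially linear further reserve accommodates the $O(r)$ connectors that realise the branching of $H_0$. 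Once this is done, the carved subpaths, their connectors and the branch sets together form a model of $H$ in $G$.

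\textbf{Main obstacle.} The crux is carrying out this routing at the stated linear rate: one must avoid the grid/wall detour entirely and instead exploit that $H$ has only $O(r)$ branch vertices, routing the at most $r$ long subdivision paths directly in $G$ and bounding the total cost of the $O(r)$ connectors and the branch sets by the linear surplus with leading constant $\frac{r+1}{2}$. A secondary technical point is that $G$ need not be highly connected, so the $B_v$ must be genuine connected subgraphs and the well-linked set must be placed in a cohesive region; making the long path $Q$ and this region coexist and interact as required, and tracking every lower-order term into $b_r$, is where most of the remaining work lies.
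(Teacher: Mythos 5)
Your high-level plan — route the $\leq r$ subdivision paths of $H$ along one long linear structure in $G$ and then connect their ends using the surrounding high connectivity — is the same as the paper's, but several of the concrete choices you made are the wrong ones, and the step you flag as the ``main obstacle'' is exactly the step you have not supplied.

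\begin{itemize}
\item \textbf{A bare long path is the wrong host.} You start from a path $Q$ on $> \tw(G)$ vertices. Such a path certainly exists, but it carries no connectivity information: there is no reason the ends of the subpaths you carve out of $Q$ can be linked to anything else. The paper instead takes a maximum-order bramble $\Beta$, applies \cref{cyclebramble} to get a \emph{cycle} $C$ hitting every element of $\Beta$, and then uses \cref{lem:pathpartition} to cut $C$ into pieces $R_1,\dots,R_d$ each of which still hits a subbramble of controlled order. It is the bramble order carried by each piece that makes the later Menger/linkage arguments work. Your well-linked set $W$ plays a similar role in spirit, but you have not explained how to make $W$ and $Q$ interact — and in general a longest path need not meet a given well-linked set at all.

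\item \textbf{You reject the grid detour; the paper needs it.} You write that one ``cannot afford to pass to a grid or wall minor, which costs a super-linear factor.'' This is true for the general grid theorem, but the paper does not use the general one. It first applies \cref{coro} (built on a rooted-linkage theorem of Robertson–Seymour, \cref{rDRP}) to get a dichotomy: either a $K_{2d}$-model in $G' = G - \bigcup P_i$ with each endpoint of the $P_i'$ in a distinct branch tree — which yields $H$ immediately — or a minor $G''$ of $G'$ that has the \emph{same} bramble order but \emph{no $K_{8r}$ minor}. In the second case, the Demaine–Hajiaghayi theorem (\cref{Klfreetw}) for $K_a$-minor-free graphs gives a $g\times g$ grid from treewidth $c_{8r}g$, i.e.\ a grid of side \emph{linear} in treewidth, and by \cref{corollary} a grid of side $\Theta(\sqrt{\abs{V(H)}})$ already contains $H$. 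So the grid step is affordable, and without the dichotomy you have no fallback when $G$ is not highly connected enough for the routing to succeed directly. You would need to either discover this dichotomy or invent a replacement; neither appears in your outline.

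\item \textbf{One-at-a-time connectors do not obviously work, and the constant is unaccounted for.} You propose to find each of the $O(r)$ connectors by ``a single use of well-linkedness after deleting the $O(r)$ objects routed so far.'' This greedy scheme can be blocked: earlier connectors can separate later endpoints from the well-linked set. The paper gets all the connections simultaneously from the rooted $K_{2d}$-model. Separately, you assert that the connector and branch-set cost fits in the surplus $\tw(G)-L$ but give no mechanism for the specific leading coefficient $\frac{r+1}{2}$. In the paper this coefficient comes out of a concrete Chebyshev-sum calculation: the pieces $R_i$ are sized so that $R_i$ has a subbramble of order $\geq \sum_{j\ge i}(\ell_j-1) + \lfloor b_r/d\rfloor$ with $\ell_1 \ge \dots \ge \ell_d$, and the total cost is $\leq \frac{1}{d}\bigl(\sum(\ell_i-1)\bigr)\bigl(\sum i\bigr) + b_r \leq \frac{r+1}{2}\abs{V(H)}+b_r$. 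That arithmetic is where the constant is earned, and nothing in your sketch reproduces it.
\end{itemize}

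In short, the reduction ``subdivision paths along a long linear object, then connect the branch vertices'' is the right picture and matches the paper, but you would need: (i) a cycle hitting a high-order bramble rather than a bare long path, (ii) a rooted-linkage dichotomy in the style of \cref{coro} to handle the case where the direct routing fails, which in turn uses the $K_a$-minor-free grid theorem you wanted to avoid, and (iii) a precise budget computation to get the coefficient $\frac{r+1}{2}$. As written, the ``main obstacle'' you identify is the proof; it is not a secondary technicality.
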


\begin{figure}
    \centering
    \begin{tikzpicture}
        \tikzstyle{v}=[circle, draw, solid, fill=black, inner sep=0pt, minimum width=3pt]
        \foreach \i in {1,...,5} {
            \node[v] (a\i) at (0,0.5*\i) {};
            \node[v] (b\i) at (2,0.5*\i) {};
            \draw (a\i) to (b\i);
        }
        \draw (a1) to (a5);
        \draw [out=180,in=180] (a1) to (a5);
        \draw (b1) to (b5);
        \draw [out=0,in=0] (b1) to (b5);
    \end{tikzpicture}
    \quad
    \begin{tikzpicture}
        \tikzstyle{v}=[circle, fill, inner sep=0pt, minimum size=3pt]
        \foreach \i in {1,...,8} {
            \node[v] (a\i) at (0,0.3*\i) {};
            \node[v] (b\i) at (2.5,0.3*\i) {};
        }
        \draw (a1) to (a8);
        \draw [out=180,in=180] (a1) to (a8);
        \draw (b1) to (b8);
        \draw [out=0,in=0] (b1) to (b8);
        \draw (a1) to (b3);
        \draw (a2) to (b5);
        \draw (a3) to (b2);
        \draw (a4) to (b1);
        \draw (a5) to (b8);
        \draw (a6) to (b7);
        \draw (a7) to (b4);
        \draw (a8) to (b6);
    \end{tikzpicture}
    \caption{The $5$-prism and an instance of a twisted $8$-prism.}
    \label{fig:twistedprism}
\end{figure}
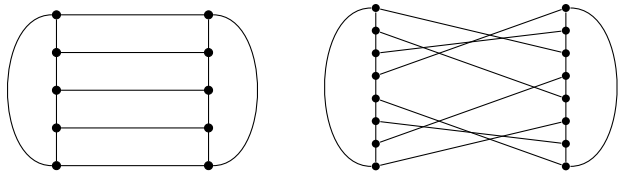

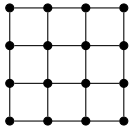
\begin{figure}
    \centering
    \begin{tikzpicture}
        \tikzstyle{v}=[circle, draw, solid, fill=black, inner sep=0pt, minimum width=3pt]

        \foreach \i in {1,...,4} {
            \foreach \j in {1,...,4} {
                \node[v] (a\i\j) at (0.5*\i,0.5*\j) {};
            }
            \draw (a\i1) to (a\i4);
        }
        \foreach \j in {1,...,4} {
            \draw (a1\j) to (a4\j);
        }
    \end{tikzpicture}    
    \caption{The ${4 \times 4}$ grid.}
    \label{fig:4x4grid}
\end{figure}

The \emph{$\ell$-prism} is a graph that is the Cartesian product of~$K_2$ and the cycle of length~$\ell$. 
A twisted $\ell$-prism is a graph that consists of two vertex-disjoint cycles of length $\ell$ joined by a matching of size~$\ell$. 
See \cref{fig:twistedprism} for an illustration of an $\ell$-prism and a twisted $\ell$-prism. 
Birmel\'e, Bondy, and Reed~\cite{BBR2007a}  showed that~${f(H) \leq 60\ell^2-120\ell+62}$ for the $\ell$-prism~$H$. 
They used this to show that the treewidth of any graph without a ${4 \times 4}$ grid minor was at most~$7262$. 
See \cref{fig:4x4grid} for an illustration of the ${4 \times 4}$ grid. 

Their approach was to show that if a graph~$G$ does not contain a minor isomorphic to a twisted $\ell$-prism~$H$, then its treewidth is at most~${60 \ell - 58}$. 
They then combine this with a well-known theorem of Erd\H{o}s and Szekeres \cite{ES1935}, which immediately implies that a graph with a twisted ${((\ell-1)^2+1)}$-prism minor contains an $\ell$-prism minor. 

We prove the following theorem, which is tight up to a~$o(1)$ factor. 

\begin{restatable}{theorem}{twistedprismone}
    \label{twistedprism1}
        Every graph without a twisted $\ell$-prism as a minor has treewidth at most~${2 \ell + 18 \lceil \frac{1 + \sqrt{2 \ell + 1}}{4} \rceil - 8}$. 
\end{restatable}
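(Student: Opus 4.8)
The plan is to prove the contrapositive: if ${\tw(G) > 2\ell + 18\lceil \tfrac{1+\sqrt{2\ell+1}}{4}\rceil - 8}$, then $G$ has a twisted $\ell$-prism minor. The first step is a reformulation that exposes the structure to be built. A twisted $\ell$-prism has maximum degree~$3$, so it is a minor of $G$ exactly when it is a topological minor of~$G$, and a short contraction argument shows that this happens precisely when $G$ contains two vertex-disjoint cycles $D_1$ and $D_2$, each of length at least~$\ell$, together with~$\ell$ pairwise vertex-disjoint paths, each having one end on $D_1$, the other end on $D_2$, all ends distinct, and no internal vertex on $D_1 \cup D_2$ (contract each arc of $D_i$ between consecutive attachment vertices, and each connecting path, to a single edge). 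So the task reduces to finding this ``two rings and $\ell$ rungs'' configuration, and I would hunt for it inside a counterexample~$G$ minimising $\abs{V(G)}$. Since treewidth is the maximum of the treewidths of the blocks, and since contracting a connected side of a separation of order at most~$2$ onto a clique on its (at most two) boundary vertices creates no twisted $\ell$-prism minor — that boundary re-links through the other side, which is connected — minimality forces $G$ to be $3$-connected; treating $3$-separations with more care lets me further assume $G$ is internally $4$-connected.

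The main argument then finds first the two long cycles and then the rungs. If $G$ has no two vertex-disjoint cycles both of length at least~$\ell$, then, by (a sharpening of) the argument behind \cref{fewcycles} in the balanced case — essentially an Erd\H{o}s--P\'osa statement for two long cycles — there is a set~$X$ of at most ${\ell + O(\sqrt\ell)}$ vertices meeting every cycle of length at least~$\ell$; then $G-X$ has no $C_\ell$-minor, so ${\tw(G-X) \le \ell-2}$ by~\cite{FL1989}, and since deleting $\abs{X}$ vertices drops treewidth by at most $\abs{X}$ we obtain ${\tw(G) \le \ell - 2 + \abs{X} \le 2\ell - 2 + O(\sqrt\ell)}$, below the threshold — a contradiction. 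So fix vertex-disjoint cycles $D_1, D_2$ of length at least~$\ell$, chosen extremally. If no~$\ell$ pairwise disjoint rungs can be routed between them, Menger's theorem yields a cut of size less than~$\ell$ separating $D_1$ from $D_2$; I would then argue that, over all extremal choices of $D_1$ and $D_2$, these small cuts organise $G$ into a tree-decomposition-like structure whose pieces each contain no two disjoint cycles of length at least~$\ell$ — returning us to the previous case and again pushing $\tw(G)$ below the threshold. Hence $D_1$, $D_2$, and the~$\ell$ rungs coexist, which is a twisted $\ell$-prism minor.

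The genuine difficulty is the leading constant. The clique $K_{2\ell-1}$ has treewidth $2\ell-2$ and, having only $2\ell-1$ vertices, has no minor with as many as $2\ell$ vertices, so the bound is optimal up to an additive~$O(\sqrt\ell)$: every inequality above must be spent with essentially no waste, and a detour through well-linked sets or the grid-minor theorem would destroy the constant~$2$ outright. In particular the Erd\H{o}s--P\'osa bound for two long cycles has to be sharpened to ${\ell + O(\sqrt\ell)}$ — tight at $K_{2\ell-1}$, where a minimum hitting set for the cycles of length at least~$\ell$ has size exactly~$\ell$ — and the $O(\sqrt\ell)$ term reflects an extremal, Erd\H{o}s--Szekeres-type phenomenon, namely the relation ${2\ell+1 \le (4k-1)^2}$ that pins down $k = \lceil\tfrac{1+\sqrt{2\ell+1}}{4}\rceil$. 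Making the cut-decomposition in the second step lose nothing in width, and disposing of the low-connectivity base cases in which $G$ is forced to be nearly complete, are where the argument has to work hardest; by contrast the reformulation and the extraction of the two long cycles are comparatively routine.
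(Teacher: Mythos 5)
Your proposal is the right shape at the highest level (two long cycles plus $\ell$ rungs), but both halves of your plan have genuine gaps, and it diverges from the mechanism the paper actually uses. First, the Erd\H{o}s--P\'osa-type claim --- that a graph with no two disjoint cycles of length at least $\ell$ admits a hitting set of size $\ell + O(\sqrt\ell)$ for long cycles --- is not a known theorem with these constants (known Erd\H{o}s--P\'osa bounds for long cycles are far weaker), and you do not prove it; invoking ``the argument behind \cref{fewcycles}'' is circular, since in the paper that argument (\cref{fewcycles2}) is itself derived from the same key lemma (\cref{twisted-or-planar}) that drives the proof of \cref{twistedprism1}. Second, the rung-finding step is unproven: if you fix two disjoint long cycles $D_1,D_2$ by some extremal choice and find a cut of order $<\ell$ between them, the claim that these cuts ``organise $G$ into a tree-decomposition-like structure whose pieces have no two disjoint long cycles'' is the hard content and is not established. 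The paper's essential trick, which your plan misses, is \emph{not to look for arbitrary long cycles}: starting from a bramble $\Beta$ of maximum order, it takes a single cycle $C$ hitting every member of $\Beta$ (\cref{cyclebramble}), partitions $C$ into arcs $P_1,P_2$ each carrying a subbramble of order $\ell$, and then closes each $P_i$ into a cycle using the two-disjoint-rooted-paths theorem. Because the resulting cycles each hit every member of a subbramble of order $\ell$, \cref{menger} immediately gives $\ell$ disjoint rungs. This bramble certificate is exactly what guarantees the rungs exist; your ``extremal'' cycles carry no such certificate, so Menger can fail against you.

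There is also a misconception worth flagging: you write that ``a detour through well-linked sets or the grid-minor theorem would destroy the constant~$2$ outright.'' In fact the paper's proof \emph{does} go through the grid-minor theorem, but only for planar graphs (Gu--Tamaki, \cref{planartw}, which is linear with leading constant $\frac92$), and only in the branch where the two-disjoint-paths attempt fails, which by Seymour's theorem (\cref{2DRP}) produces a planar minor carrying a residual bramble of order only $O(\sqrt\ell)$. So the grid extraction is applied to a bramble of order $O(\sqrt\ell)$, yielding a $4r\times 4r$ grid with $r = \lceil\frac{1+\sqrt{2\ell+1}}{4}\rceil$, and then one packs an $(8r^2-4r)$-prism inside that grid; the quadratic $r^2\approx \ell/8$ recovers all $\ell$ rungs. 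The additive $O(\sqrt\ell)$ in the theorem is precisely the bramble budget handed to the planar subcase, and your proposed relation $2\ell+1\le(4r-1)^2$ is indeed where it comes from --- but it lives in a grid-packing step, not in an Erd\H{o}s--Szekeres step; Erd\H{o}s--Szekeres appears in the paper only in \cref{75prism}. In short, your plan identifies the target structure correctly, but the key tools the proof actually needs --- Seymour's 2-DRP theorem to produce a planar minor, and the bramble-based construction of the two cycles so that \cref{menger} applies --- are both absent from your outline and are not replaceable by the two sketches you give.
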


Since the wheel graph on~${\ell+1}$ vertices is a minor of a twisted $\ell$-prism, we deduce the following corollary, which improves the bound~\eqref{eq:wheel} for the wheel by Raymond and Thilikos~\cite[Theorem 5.1]{RT2017a}. 
\begin{corollary}
    \label{ourwheel}
    Every graph without the $k$-vertex wheel graph  as a minor has treewidth at most ${2 k + 18 \lceil \frac{1 + \sqrt{2k -1}}{4} \rceil - 10}$. \qed
\end{corollary}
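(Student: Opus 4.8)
The plan is to derive the corollary directly from \cref{twistedprism1}, exploiting the fact—stated just before the corollary—that the wheel on $\ell+1$ vertices is a minor of the twisted $\ell$-prism. First I would spell out that minor relation. Write the twisted $\ell$-prism as two vertex-disjoint $\ell$-cycles $C$ and $D$ together with a perfect matching $M$ between $V(C)$ and $V(D)$ (it is perfect because $\abs{V(C)} = \abs{V(D)} = \ell = \abs{M}$). Contracting every edge of $D$ identifies $V(D)$ into a single vertex~$x$, and since $M$ is perfect each matching edge becomes an edge from $x$ to a distinct vertex of~$C$; after deleting the loops created by the edges of $D$ and suppressing parallel edges, $x$ is adjacent to every vertex of~$C$. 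The resulting simple graph is the wheel with hub~$x$ and rim~$C$, which has $\ell+1$ vertices, so the $(\ell+1)$-vertex wheel is a minor of the twisted $\ell$-prism.

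Next I would invoke transitivity of the minor relation: a graph containing a twisted $(k-1)$-prism as a minor also contains the $k$-vertex wheel as a minor, so a graph $G$ with no $k$-vertex wheel minor has no twisted $(k-1)$-prism minor. Applying \cref{twistedprism1} with $\ell = k-1$ then yields
\[
    \tw(G) \;\le\; 2(k-1) + 18\left\lceil \frac{1 + \sqrt{2(k-1)+1}}{4} \right\rceil - 8 \;=\; 2k + 18\left\lceil \frac{1 + \sqrt{2k-1}}{4} \right\rceil - 10 ,
\]
which is exactly the claimed bound.

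I do not expect any real obstacle: \cref{twistedprism1} does all the work. The only points to be careful about are checking that the graph obtained by the contraction above coincides with the paper's intended ``$k$-vertex wheel'' (a hub together with a cycle on the remaining $k-1$ vertices), and performing the index substitution $\ell = k-1$ together with the arithmetic simplification of the bound correctly.
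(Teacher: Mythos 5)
Your proposal is correct and is exactly the paper's (implicit) argument: the paper states that the wheel on $\ell+1$ vertices is a minor of the twisted $\ell$-prism and immediately deduces the corollary from \cref{twistedprism1} via the substitution $\ell = k-1$, which is precisely what you do. You have simply spelled out the contraction argument and the arithmetic that the paper leaves to the reader.
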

We also obtain the following. 

\begin{restatable}{theorem}{twistedprismtwo}
    \label{twistedprism2}
        Every graph without a twisted $\ell$-prism or a ${4 \times 4}$ grid as a minor has treewidth at most~${2 \ell + 10}$. 
\end{restatable}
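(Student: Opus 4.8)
The plan is to run the proof of \cref{twistedprism1} essentially verbatim, but to locate the one place where it concedes an additive $\Theta(\sqrt\ell)$ — a pigeonhole step that extracts a \emph{clean} twisted $\ell$-prism out of a larger but tangled prism-like configuration — and to replace it by an argument that costs only a constant once a $4\times 4$ grid minor is forbidden. It is convenient to recall the reformulation that a graph has a twisted $\ell$-prism as a minor if and only if it contains two vertex-disjoint cycles $C_1,C_2$ together with $\ell$ pairwise vertex-disjoint paths, each with one end on $C_1$, one end on $C_2$, and no internal vertex on $C_1\cup C_2$: contracting the arcs of $C_1$ and of $C_2$ between consecutive ends of these paths yields a twisted $\ell$-prism model, the twisting being the permutation comparing the two cyclic orders of the ends (and any twisting is permitted). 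So it suffices to produce such a linkage of $\ell$ paths, or else a $4\times 4$ grid minor.

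Assume for contradiction that $\tw(G)\ge 2\ell+11$ while $G$ has neither a twisted $\ell$-prism minor nor a $4\times 4$ grid minor. Running the extraction in the proof of \cref{twistedprism1} with this treewidth bound produces its intermediate object: two vertex-disjoint cycles $C_1,C_2$ and $\ell$ pairwise disjoint $C_1$--$C_2$ paths of the above kind, whose ends on $C_1$ and on $C_2$ may, however, interleave in a complicated pattern, so that the resulting twisted-$\ell$-prism model is as yet \emph{not} clean enough for \cref{twistedprism1}'s purposes. (In \cref{twistedprism1} the remaining $\Theta(\sqrt\ell)$ is exactly the price of pigeonholing a clean sub-linkage out of a still larger family, and the matching construction shows this price is unavoidable there.) The point is that an interleaving pattern which obstructs cleanness is itself rich enough to carry a $4\times 4$ grid minor: two crossing families of nested arcs, or a constant-length ``staircase'' of pairwise crossing linking paths, already contains a model of the $16$-vertex grid of \cref{fig:4x4grid}. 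Since that minor is forbidden, no such pattern occurs, the extracted linkage is already clean, and we obtain a twisted $\ell$-prism minor, a contradiction. Hence every graph with no twisted $\ell$-prism minor and no $4\times 4$ grid minor has treewidth at most $2\ell+10$.

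The main obstacle is the last step: pinning down precisely which configurations obstruct cleanness and verifying that each of them contains a $4\times 4$ grid minor, while every configuration avoiding all of them still directly yields the twisted $\ell$-prism. Because the target grid has only $16$ vertices, a bounded amount of crossing or nesting suffices, which is exactly why the $\Theta(\sqrt\ell)$ overhead of \cref{twistedprism1} collapses to the constant $10=18\cdot 1-8$; but making the right definition and carrying out the routing is a technical case analysis that must be threaded through the structure of the proof of \cref{twistedprism1}.
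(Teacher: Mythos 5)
Your plan rests on a misreading of where the $\Theta(\sqrt{\ell})$ overhead in the proof of \cref{twistedprism1} comes from. You describe it as the price of ``pigeonholing a clean sub-linkage out of a still larger family'' --- i.e., disentangling an interleaved set of $C_1$--$C_2$ paths --- but no such step exists in that proof, and indeed none is needed: as you yourself note when setting up the reformulation, \emph{any} twisting is permitted, so the moment \cref{menger} produces $\ell$ disjoint $C_1$--$C_2$ paths between two suitable cycles you already have a twisted $\ell$-prism minor, interleaving or not. The actual source of the $\sqrt{\ell}$ lies in the other branch of \cref{twisted-or-planar}: when the two-cycle alternative fails, one obtains a planar minor $J$, and \cref{twistedprism1} needs $\tw(J)\ge 18r-5$ so that \cref{planartw} yields a $4r\times 4r$ grid, which must in turn contain an $(8r^2-4r)$-prism; forcing $8r^2-4r\ge\ell$ is what makes $r=\Theta(\sqrt{\ell})$. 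Consequently, your proposed repair --- showing that any obstruction to ``cleanness'' already contains a $4\times 4$ grid --- is aimed at a nonexistent obstacle and does not engage with the step that actually costs $\sqrt{\ell}$; the ``technical case analysis'' you defer is not merely unfinished but misdirected.

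The paper's proof of \cref{twistedprism2} is much simpler and does not touch the linkage at all. Run the same machinery: if $\tw(G)\ge 2\ell+11$ then a maximum bramble $\Beta$ has order at least $2\ell+12$; by \cref{menger} two vertex-disjoint cycles each hitting a subbramble of order at least $\ell$ would give a twisted $\ell$-prism minor; so by \cref{twisted-or-planar} with $\ell_1=\ell_2=\ell$, $G$ has a planar minor $J$ with a bramble of order at least $14$, hence $\tw(J)\ge 13=\tfrac{9}{2}\cdot 4-5$, hence by \cref{planartw} a $4\times 4$ grid minor. Once the $4\times 4$ grid is itself a permitted target, the planar minor need only have \emph{constant} treewidth, so the parameter $r$ in the proof of \cref{twistedprism1} is simply set to $1$; no analysis of interleaving patterns is required or performed.
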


We use the latter result to show a new upper bound on the treewidth of graphs without a ${4 \times 4}$ grid minor, 
improving the previous bound $7262$ by 
Birmel\'e, Bondy, and Reed~\cite{BBR2007a}.

\begin{restatable}{theorem}{nofourbyfours}
    \label{no4by4s}
    Every graph without a ${4 \times 4}$ grid minor has treewidth at most~$160$. 
\end{restatable}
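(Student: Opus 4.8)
The plan is to derive a contradiction using \cref{twistedprism2} as the main engine. Suppose $G$ has no $4\times 4$ grid minor but $\tw(G)\ge 161$, and choose such a $G$ minimal with respect to the minor order. Since $161>160=2\cdot 75+10$, \cref{twistedprism2} forces $G$ to contain a twisted $75$-prism as a minor; fix a minor model of it, consisting of two vertex-disjoint $75$-cycles $C$ and $D$ together with a perfect matching $M$ between them whose pattern is encoded by a permutation $\pi$ of $\{1,\dots,75\}$. Since in fact $\tw(G)>2\ell+10$ for every $\ell\le 75$, $G$ contains a twisted $\ell$-prism minor for every such $\ell$, which I would keep in reserve for the degenerate case below.

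The next step is to analyse $\pi$ via the Erdős–Szekeres theorem. As $75>8^2$, the sequence $\pi$ has a monotone subsequence of length $9$; contracting, for each pair of consecutive chosen coordinates, the arc of $C$ and the arc of $D$ strictly between them turns the twisted $75$-prism into a $9$-prism in which the $66$ matching edges outside the chosen subsequence survive as chords joining the two $9$-cycles. I would then apply Erdős–Szekeres a second time to the coordinates of $\pi$ not yet consumed to locate a further long monotone block, this time of the opposite orientation; the two resulting transverse monotone structures, after further arc-contractions, furnish the four mutually adjacent degree-$4$ branch sets and the surrounding cyclic frame that together model the $4\times 4$ grid. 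A short finite verification then shows that any such richly chorded prism does contain a $4\times 4$ grid minor — contradicting the choice of $G$.

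I expect the main obstacle to be exactly this last reduction, together with making the constants line up: the whole point of the bound $2\cdot 75+10=160$ is that a twisted prism of length only $75$ already suffices, so one must arrange the monotone subsequences and the arc-contractions to waste essentially no vertices of the twisted $75$-prism. The second delicate point is the degenerate case in which $\pi$ is so close to monotone that the twisted $75$-prism is isomorphic to the plain $75$-prism, which by itself has treewidth $3$ and so contains no $4\times 4$ grid minor; here I would not work inside the prism but instead use the minimality of $G$ together with the slack in the inequality $\tw(G)\ge 161>2\cdot 75+10$, reapplying \cref{twistedprism2} to a suitable minor of $G$ to extract the crossing structure that the plain prism lacks.
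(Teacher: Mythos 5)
Your top-level plan coincides with the paper's: apply \cref{twistedprism2} to a graph of treewidth at least $161 = 2\cdot 75 + 11$ to extract a twisted $75$-prism minor, and then show that every twisted $75$-prism contains a $4 \times 4$ grid minor (this is exactly the paper's \cref{75prism}). The first step is carried out correctly. The second step, however, is where your proposal has a genuine gap, and moreover a key claim you make along the way is false.

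You worry about ``the degenerate case in which $\pi$ is so close to monotone that the twisted $75$-prism is isomorphic to the plain $75$-prism, which by itself has treewidth $3$ and so contains no $4\times 4$ grid minor.'' This premise is wrong: the plain $\ell$-prism for $\ell \geq 12$ \emph{does} contain a $4\times 4$ grid minor (its treewidth is $4$, not $3$). To see this, note that the $4\times 4$ grid is exactly a $12$-cycle (the boundary), a $4$-cycle (the inner square), and $8$ non-crossing edges between them; taking the outer $12$-cycle as is, contracting the inner $12$-cycle into four consecutive arcs $\{w_{11},w_{12},w_1\}$, $\{w_2,w_3,w_4\}$, $\{w_5,w_6,w_7\}$, $\{w_8,w_9,w_{10}\}$, and keeping the matching edges $u_1w_1,u_2w_2,u_4w_4,u_5w_5,u_7w_7,u_8w_8,u_{10}w_{10},u_{11}w_{11}$ produces the grid. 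So there is no degenerate case, and your proposed fix (going back to $G$, using minimality and the slack in the inequality, reapplying \cref{twistedprism2} to some unspecified minor) is both unnecessary and not actually a workable escape route, since it gives no mechanism for injecting ``crossing structure'' that is not already present.

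Beyond that, the central reduction you describe --- two applications of Erd\H{o}s--Szekeres to the coordinates of $\pi$, with ``arc-contractions'' producing ``four mutually adjacent degree-$4$ branch sets'' and a ``surrounding cyclic frame'' --- is not a proof but an outline, and you yourself flag it as the main obstacle. The paper's argument for \cref{75prism} avoids these difficulties by a different and cleaner case split: it first asks whether the twisted $75$-prism has a $4$-cycle. If it does, one application of Erd\H{o}s--Szekeres to the $73$ remaining matching edges yields (using the two edges of the $4$-cycle when the monotone subsequence is increasing) two disjoint $12$-cycles with a non-crossing matching of size $11$, which contains the $4\times 4$ grid as above. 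If there is no $4$-cycle, one application of Erd\H{o}s--Szekeres to $74$ coordinates yields a subdivision of the $10$-prism in which, by a short pigeonhole argument on subdivided edges of each of its two $10$-cycles (any length-$3$ path with its two outer edges subdivided already gives the grid), at most four edges per cycle can be subdivided, forcing a $4$-cycle and hence a contradiction. Your sketch would need to be replaced by something with this level of concreteness for the constant $75$ to be justified.
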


This paper is organized as follows. 
\cref{sec:prelim} recalls brambles.
In \cref{sec:smallcomponents}, we consider graphs $H$ having a few non-tree components where each non-tree component has at most~$r$ vertices, and prove \cref{smallcomponents}.
In \cref{sec:twistedprism}, we consider twisted $\ell$-prisms and prove \cref{twistedprism1,twistedprism2,no4by4s}.
In \cref{sec:fewcycles}, we consider graphs $H$ that are the disjoint union of a few cycles and prove \cref{fewcycles}.
In \cref{sec:outerplanar}, we consider graphs $H$ that are subdivisions of planar graphs with at most~$r$ edges, and prove \cref{outerplanar}.
We conclude this paper by presenting a few open problems in \cref{sec:open}.

\section{Preliminaries}\label{sec:prelim}

In proving these results, we often focus on the dual of treewidth, the \emph{bramble number}. 
A \emph{bramble}~$\Beta$ in a graph $G$ is a set of sets~${B \subseteq V(G)}$ that induce a connected subgraph~$G[B]$ such that for every two~${B, B' \in \Beta}$, we have that~${G[B \cup B']}$ is connected. 
A \emph{hitting set} for a bramble is a set of vertices intersecting all of its elements. 
The \emph{order} of a bramble~$\Beta$, denoted by~$\ord(\Beta)$, is the minimum size of a hitting set for~$\Beta$. 
The \emph{bramble number of~$G$} is the largest order of a bramble in~$G$. 
Any subset of a bramble $\Beta$ yields a new bramble, which is called a \emph{subbramble} of~$\Beta$. 
Seymour and Thomas~\cite{ST1993} showed the following duality theorem, see \cite{Reed1997}. 

\begin{theorem}[Seymour and Thomas~\cite{ST1993}]\label{duality}
    The treewidth of a graph is exactly one less than its bramble number. 
\end{theorem}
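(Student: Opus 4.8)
The plan is to prove \cref{duality}, the Seymour--Thomas duality theorem stating that $\tw(G) = \operatorname{bn}(G) - 1$, where $\operatorname{bn}(G)$ is the bramble number. I would break this into the two inequalities. The easy direction is $\operatorname{bn}(G) \le \tw(G) + 1$: given any tree decomposition of width $w$ with bags $(X_t)_{t \in V(T)}$ and any bramble $\Beta$, I would show $\Beta$ has a hitting set of size at most $w+1$, which gives $\ord(\Beta)\le w+1$ and hence $\operatorname{bn}(G)\le w+1$. For this I would use the standard fact that every bramble element $B$, inducing a connected subgraph, corresponds to a connected subtree $T_B := \{t : X_t \cap B \ne \emptyset\}$ of $T$, and that the subtrees $\{T_B : B \in \Beta\}$ pairwise intersect (since $G[B\cup B']$ is connected, the union $T_B \cup T_{B'}$ is connected in $T$, forcing $T_B \cap T_{B'} \ne \emptyset$). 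By the Helly property for subtrees of a tree, there is a common node $t_0$ lying in every $T_B$; then $X_{t_0}$, which has size at most $w+1$, meets every $B \in \Beta$, so it is a hitting set.

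The harder direction is $\tw(G) \le \operatorname{bn}(G) - 1$: if $G$ has no bramble of order $k+1$, then $G$ has a tree decomposition of width at most $k$. Here I would argue contrapositively/constructively. The cleanest route is a ``haven''-style or recursive argument: assuming $\tw(G) > k$, build a bramble of order $> k$. Concretely, I would use the characterization via the cops-and-robber game or, more directly, run the natural greedy tree-decomposition construction and extract a bramble from its failure. One standard approach: consider all separations $(A,B)$ of $G$ of order at most $k$; if for every such separation the robber has a ``side'' to escape to (i.e.\ the ``small'' side in a suitable sense can be consistently chosen), these choices assemble into a bramble of large order. I would make this precise by taking the bramble to consist of the vertex sets of all ``$k$-connected-like'' blobs — e.g.\ following the Seymour--Thomas proof, one looks at minimal sets that cannot be separated from a fixed large set by few vertices — and verify the two bramble axioms (connectivity of each element and pairwise touching) together with the order bound.

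The main obstacle will be this second inequality, and specifically producing the high-order bramble from the assumption $\tw(G) \ge k+1$ while keeping the argument self-contained. I would most likely organize it through the equivalent notion of a \emph{$(k+1)$-bramble-free} graph admitting a tree decomposition: prove by induction on $\abs{V(G)}$ that if $G$ has no bramble of order $k+1$ then $\tw(G) \le k$. The inductive step picks a separation $(A,B)$ with $\abs{A\cap B}\le k$ that is ``balanced'' in the sense of splitting off a bramble-poor piece, applies induction to $G[A]$ and $G[B]$ (each inheriting bramble-freeness, possibly after adding a clique on the separator — a point needing care, since adding edges can create brambles, so one argues that any new bramble would lift to one in $G$), and glues the two tree decompositions along a bag containing $A\cap B$. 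Showing such a useful separation always exists when $G$ itself has no $(k+1)$-order bramble is the crux; I would derive it by contradiction, showing that if no balanced low-order separation exists then the family of ``unbalanced sides'' forms a bramble of order $k+1$. I would also record the small reductions at the start: it suffices to prove both inequalities, $\tw$ and $\operatorname{bn}$ are both minor-monotone-ish under the relevant operations, and the trivial base cases (complete graphs, edgeless graphs) check out directly.

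Throughout I would cite \cite{ST1993} for the original proof and \cite{Reed1997} for the streamlined exposition, and present the write-up so that the reader who wants full details is pointed there, while the two inequalities above are stated with enough of the argument that the theorem is not a black box.
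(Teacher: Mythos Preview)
The paper does not prove \cref{duality} at all: it is stated as a known theorem of Seymour and Thomas~\cite{ST1993}, with a pointer to~\cite{Reed1997} for exposition, and then used as a black box throughout. So there is no ``paper's own proof'' to compare against; your proposal goes well beyond what the paper does.

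As for the sketch itself: the easy direction via the Helly property is correct and standard. Your outline of the hard direction is in the right spirit but is only a plan, not a proof; the delicate point you flag (finding a balanced separation of order at most~$k$, or else assembling a bramble of order~$k{+}1$ from the failure) is exactly where the real work lies, and your description does not yet carry it out. In particular, the induction you propose on $G[A]$ and $G[B]$ after making the separator a clique requires a careful formulation of the inductive hypothesis (typically one works with a designated set~$W$ that must lie in a single bag, rather than literally adding clique edges), and the argument that the ``big sides'' form a bramble needs the precise notion of which side is big and why any two such sides touch. Since the paper is content to cite the result, your write-up need only state the theorem and the references; if you want to include more, the Helly argument for $\operatorname{bn}(G)\le \tw(G)+1$ is short enough to give in full, while for the converse a citation is appropriate.
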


We will use the following result from Birmel\'e, Bondy, and Reed~\cite{BBR2007a}.
\begin{lemma}[Birmel\'e, Bondy, and Reed~{\cite[Theorem 2.4]{BBR2007a}}]
    \label{cyclebramble}
    Let~$G$ be a graph having a bramble~$\Beta$ of order at least three. 
    Then, there is a cycle~$C$ meeting every element of~$\Beta$. 
\end{lemma}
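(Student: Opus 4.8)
The plan is to exhibit a cycle whose vertex set is a hitting set for~$\Beta$, which then automatically meets every element of~$\Beta$; the engine is to first find a minimal ``transversal tree'' for~$\Beta$ and then close it into a cycle.

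First I would choose a subtree~$T$ of~$G$ with $V(T)$ a hitting set for~$\Beta$ and $\abs{V(T)}$ as small as possible. Such a~$T$ exists: all elements of~$\Beta$ lie in a single component of~$G$ (since pairwise unions induce connected subgraphs), and a spanning tree of that component works; moreover $\abs{V(T)}\ge 3$ because $V(T)$ is a hitting set and $\ord(\Beta)\ge 3$. Minimality then yields two facts. Every leaf~$\ell$ of~$T$ is \emph{private}: since $T-\ell$ is a strictly smaller subtree it fails to meet some $B_\ell\in\Beta$, and as $B_\ell$ does meet $V(T)$ we get $B_\ell\cap V(T)=\{\ell\}$. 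And every leaf~$\ell$ of~$T$ has degree at least~$2$ in~$G$: otherwise the unique neighbour~$u$ of~$\ell$ in~$G$ is also its (only) neighbour in~$T$, hence lies in~$V(T)$, and then connectedness of~$G[B_\ell]$ forces $B_\ell=\{\ell\}$; but now $\{\ell,u\}$ is a hitting set of size~$2$ (every $B\in\Beta$ makes $G[\{\ell\}\cup B]$ connected, so $B$ contains~$\ell$ or its only neighbour~$u$), contradicting $\ord(\Beta)\ge 3$.

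The heart of the argument is the case where~$T$ is a path, with endpoints~$x$ and~$y$; let $B_x,B_y$ be the private elements above. Since $G[B_x\cup B_y]$ is connected, it contains an $x$--$y$ path~$P$, and since $(B_x\cup B_y)\cap V(T)=\{x,y\}$, every internal vertex of~$P$ lies off~$V(T)$ (if $xy\in E(G)$ take $P=xy$). Hence $C:=T\cup P$ is a cycle with $V(T)\subseteq V(C)$, so $V(C)$ is a hitting set and~$C$ meets every element of~$\Beta$. This is exactly where the pairwise-connectedness axiom of a bramble is essential: for a merely ``connected'' family of sets the conclusion would fail.

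It remains to reduce to the path case, i.e.\ to show that a minimal transversal tree~$T$ has no branch vertex; I expect this to be the main obstacle. The plan is to pick three leaves $\ell_1,\ell_2,\ell_3$ of~$T$ with private sets $B_1,B_2,B_3$ and median vertex~$b$, so that the $b$--$\ell_i$ paths~$Q_i$ in~$T$ are internally disjoint and $b\notin B_i$; applying minimality to the proper subtrees obtained by deleting one leg of~$T$ supplies, for each~$i$, an element of~$\Beta$ whose trace on~$V(T)$ lies entirely in $V(Q_i)\setminus\{b\}$. Because $G[B_i\cup B_j]$ is connected and meets~$V(T)$ only in $\{\ell_i,\ell_j\}$, a detour path through $B_i\cup B_j$ avoids the rest of~$T$, in particular~$b$ and the remaining leg; the idea is to splice such detours with legs of~$T$ to reroute two of the three branches through a single path. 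This should yield either a cycle whose vertex set contains~$V(T)$, or a transversal tree on the same vertex set with no branch vertex (reducing to the path case), or a transversal tree on fewer vertices (contradicting minimality). Making this surgery precise — keeping the rerouted graph connected when~$T$ has four or more leaves, and verifying that no element of~$\Beta$ escapes the new vertex set — is the delicate part, and is where the real work lies.
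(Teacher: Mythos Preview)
The paper does not prove this lemma --- it is quoted from \cite{BBR2007a} without argument --- so there is no in-paper proof to compare against. Your path case is correct and is the standard endgame. The gap is the reduction to that case, and it is a genuine one rather than missing bookkeeping.

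The trichotomy you aim for in step~5 --- a cycle through all of $V(T)$, a transversal path on the same vertex set, or a strictly smaller transversal tree --- is not known to be exhaustive, and the detours $R_{ij}$ you build do not clearly deliver any of the three. If $T$ has three legs $Q_1,Q_2,Q_3$ meeting at~$b$, a closed walk from two legs and one detour omits the interior of the third leg, and splicing in a second detour either revisits~$b$ or reuses internal vertices lying in the shared private set~$B_i$; so a cycle through $V(T)$ is not produced. Replacing leg~$Q_1$ by $R_{12}$ yields a tree $T'=R_{12}\cup Q_2\cup Q_3$ whose vertex set omits the interior of~$Q_1$; nothing prevents some $B\in\Beta$ from meeting $V(T)$ only in that interior and missing $V(R_{12})$ entirely, so $T'$ need not be transversal. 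You flag exactly this (``verifying that no element of~$\Beta$ escapes the new vertex set''), but there is no mechanism in the plan that forces it.

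The standard argument avoids trees: one proves directly that some \emph{path} hits every element of~$\Beta$. Taking $\Beta$ finite without loss of generality, choose a path $P$ minimising the number of elements of~$\Beta$ disjoint from $V(P)$, and subject to that minimising~$\abs{V(P)}$. If some $B^{\ast}$ is missed, then by the secondary minimisation each endpoint~$x$ of~$P$ has a private hit element $B_x$ with $B_x\cap V(P)=\{x\}$ (your leaf argument verbatim); since $G[B_x\cup B^{\ast}]$ is connected and meets $V(P)$ only at~$x$, one extends $P$ past~$x$ inside $B_x\cup B^{\ast}$ to a vertex of~$B^{\ast}$, strictly decreasing the number of missed elements --- a contradiction. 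Now apply your own closing argument to a shortest transversal path. The tree detour is simply unnecessary.
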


\section{%
\texorpdfstring{Excluding $H$ with Bounded Size Components}%
{Excluding H with Bounded Size Components}}\label{sec:smallcomponents}

In this short section, we prove \cref{smallcomponents}.
As we discussed in \cref{prop:fewcycles}, in order to have a linear bound on~$f(H)$, $H$ cannot have too many vertex-disjoint cycles. 
So if we limit our attention to graphs with at most~$r$ vertices in each non-tree component, then the number of non-tree components should be small to have a linear bound on~$f(H)$. 

To state the next lemma, we first define the pathwidth of graphs. 
A \emph{path decomposition} is a tree decomposition in which the underlying tree is a path. 
The \emph{pathwidth} of a graph~$G$ is the minimum width of its path decompositions. 
Clearly, the pathwidth of a graph is always greater than or equal to its treewidth. 

To deal with the tree components, we use the following lemma of Diestel~\cite{Diestel1995}, which was used in his short proof of the theorem of Bienstock, Robertson, Seymour, and Thomas~\cite{BRST1991} that every graph of pathwidth at least~${t-1}$ contains every tree on~$t$ vertices as a minor. 
A better presentation of its proof can be found in the proof of Theorem 12.4.5 in the first edition of the book by Diestel~\cite{Diestel1997}. 
Seymour~\cite{Seymour2023} wrote this more explicitly. 

\begin{lemma}[Diestel~\cite{Diestel1995,Diestel1997}]
    \label{diestel}
    Let~$T$ be a tree with~$t$ vertices and let~$G$ be a graph of pathwidth at least~${t-1}$. 
    Then there is a separation~${(A,B)}$ of~$G$ such that 
    \begin{enumerate}[label=(P\arabic*)]        
        \item \label{p1} ${\abs{A \cap B} = t}$, 
        \item \label{p2} ${G[A]}$ contains~$T$ as a minor where each vertex of~${A \cap B}$ appears as a distinct vertex of~$T$, and 
        \item \label{p3} ${G[A]}$ has a path decomposition of width~${t-1}$ with~${A \cap B}$ as the last bag. 
    \end{enumerate}    
\end{lemma}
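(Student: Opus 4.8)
The plan is to prove the statement by induction on $t=\abs{V(T)}$. For the base case $t=1$, the tree $T$ is a single vertex, and the hypothesis that $G$ has pathwidth at least $0$ guarantees $V(G)\neq\emptyset$; for any vertex $v$ the separation $(A,B):=(\{v\},V(G))$ has $\abs{A\cap B}=1$, $G[A]$ trivially contains $T$ as a minor, and the one-bag path decomposition of $G[\{v\}]$ has width $0$ with last bag $\{v\}=A\cap B$, so all three properties hold.

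For the inductive step, fix a leaf $\ell$ of $T$ with neighbour $\ell'$ and set $T':=T-\ell$, a tree on $t-1$ vertices. Since $G$ has pathwidth at least $t-1>(t-1)-1$, applying the induction hypothesis to $T'$ produces a separation $(A',B')$ of $G$ with $\abs{A'\cap B'}=t-1$, a minor model of $T'$ in $G[A']$ with connected branch sets $C_w$ ($w\in V(T')$) such that each $C_w$ contains exactly one vertex of $A'\cap B'$ (so $A'\cap B'$ consists of one branch vertex per branch set), and a path decomposition $\mathcal P'$ of $G[A']$ of width $t-2$ with last bag $A'\cap B'$. Among all such outputs of the induction hypothesis I would fix one with $\abs{A'}$ as large as possible (this maximality will be needed below). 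The target is a separation $(A,B)$ with $A\supseteq A'$ and $\abs{A\cap B}=t$, a path decomposition of $G[A]$ of width $t-1$ ending in the bag $A\cap B$, and a minor model of $T$ obtained from $(C_w)$ by adjoining a single connected branch set $C_\ell$ that is disjoint from the others and has an edge to $C_{\ell'}$, with $A\cap B$ again consisting of exactly one branch vertex of each of the $t$ branch sets.

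Two easy observations drive the extension. First, $B'\setminus A'\neq\emptyset$, since otherwise $A'=V(G)$ and $\mathcal P'$ would witness that $G$ has pathwidth at most $t-2$. Second, $G[B']$ has no path decomposition of width at most $t-2$ whose first bag is $A'\cap B'$: concatenating such a decomposition after $\mathcal P'$ would give a path decomposition of $G$ of width at most $t-2$, because the only edges of $G$ missed by the two halves would run between $A'\setminus B'$ and $B'\setminus A'$, of which there are none. Starting from the last bag $A'\cap B'$ of $\mathcal P'$, I would then extend $\mathcal P'$ step by step into $B'\setminus A'$, at each step keeping the current bag of size at most $t$ and keeping in it one branch vertex of each branch set — allowing the sets $C_w$ to grow forward and their branch vertices to migrate — and, as soon as the current bag reaches size $t$, using the spare slot to start growing a branch set attached to $C_{\ell'}$. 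The second observation together with the maximality of $\abs{A'}$ should ensure that this process neither continues indefinitely with every bag of size at most $t-1$ nor terminates having covered all of $B'$; so at some stage the current bag has size exactly $t$ and contains a branch vertex of a connected set adjacent to $C_{\ell'}$. Taking $A$ to be the union of the bags produced so far, $B$ the other side of the associated separation, and the grown family of branch sets together with this new set, yields the required $(A,B)$.

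The hard part will be making this extension rigorous: one must show that the extension can always be pushed forward until a bag of size $t$ appears, and that at that moment a connected set adjacent to $C_{\ell'}$, disjoint from all of the $C_w$, and with a branch vertex in the size-$t$ bag really does exist. This is where the maximality of $\abs{A'}$ and a careful case analysis come in, in particular to handle vertices of $B'\setminus A'$ whose neighbours in the part processed so far lie only in the interior of the current decomposition (forcing one to re-cut it) and the case where the sets $C_w$ already exhaust $A'$ (forcing the new branch set to lie entirely in $B'\setminus A'$). By contrast, the base case, the two observations, and the bookkeeping that the enlarged family of branch sets is still a valid minor model of $T$ should all be routine.
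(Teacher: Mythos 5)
The paper does not include its own proof of this lemma; it is cited directly from Diestel~\cite{Diestel1995,Diestel1997}, with a remark that Seymour wrote it out more explicitly. So there is no in-paper argument to compare against, and the proposal has to stand on its own.

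Your base case, your two preliminary observations (that $B'\setminus A'\neq\emptyset$, and that $G[B']$ admits no width-$(t-2)$ path decomposition whose first bag is $A'\cap B'$), and the overall plan of inducting on $t$ by removing a leaf $\ell$ of $T$ are all sound. The problem is that the extension step, which you correctly flag as the ``hard part,'' contains a genuine gap, and neither the maximality of $\abs{A'}$ nor the vague ``careful case analysis'' is enough as stated to close it. Concretely, when you push the path decomposition one bag forward from $Y_i$ to $Y_{i+1}$ and a vertex $u$ that was the branch vertex of some $C_w$ drops out, you assert that the branch vertex ``migrates'' to the vertex $v$ entering $Y_{i+1}$; but there is no reason $v$ should be adjacent to, or even reachable inside $A$ from, the current $C_w$. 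Nothing in the hypothesis or in the maximality of $\abs{A'}$ supplies the Menger-type linkage between consecutive bags that would let the branch sets follow the decomposition forward, and one can easily arrange a transition $Y_{i+1}=(Y_i\setminus\{u\})\cup\{v\}$ in which $v$ has no neighbour in the branch set $C_w$ that loses $u$. The standard way to obtain such a linkage is to work with a ``lean'' (linked) path decomposition in the sense of Thomas, or to run a Menger argument against the sequence of order-$(t-1)$ separators $Y_0,\dots,Y_{j^*-1}$; your write-up never invokes either.

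There is a second, independent hole in the same place: even granting that the branch sets can be grown and that at some step the current bag reaches size $t$ with a spare vertex $v$, you still need $v$ (or a connected set through $v$, disjoint from all grown branch sets) to be \emph{adjacent to the grown copy of} $C_{\ell'}$, not just to some branch set. Your sketch asserts this (``contains a branch vertex of a connected set adjacent to $C_{\ell'}$'') but offers no argument for it. Worse, if the original $z_{\ell'}\in A'\cap B'$ has no neighbour in $B'\setminus A'$ at all, then because there are no edges between $A'\setminus B'$ and $B'\setminus A'$ and the other $t-2$ slots of the boundary are consumed by the remaining branch sets, any connected $C_\ell$ meeting $B'\setminus A'$ that is adjacent to $C_{\ell'}$ is impossible, and one must instead argue that this configuration contradicts the maximal choice of $A'$ (or re-choose the leaf, or re-choose the $T'$-model). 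None of this is carried out, and it is exactly where the real content of Diestel's lemma lives.
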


By using \cref{diestel}, we deduce the following.

\begin{lemma}
    \label{removingtree}
   For any tree component~$T$ of~$H$, ${f(H) \leq f(H-V(T)) + \abs{V(T)}}$.  
\end{lemma}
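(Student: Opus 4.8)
The plan is to prove the equivalent statement that every graph $G$ with $\tw(G) \ge f(H-V(T)) + \abs{V(T)} + 1$ contains $H$ as a minor. Write $t := \abs{V(T)}$ and $H' := H - V(T)$, so that $H$ is the disjoint union of $T$ and $H'$; it then suffices to exhibit a model of $T$ and a model of $H'$ in $G$ that are vertex-disjoint, since no edges run between the two components of $H$. If $H'$ has no vertices then $H = T$ and the bound is immediate from $f(T) = \abs{V(T)} - 2$ (Bienstock, Robertson, Seymour, and Thomas~\cite{BRST1991}), so assume $H'$ is nonempty, whence $f(H') \ge -1$ and thus $\tw(G) \ge t$.

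First I would apply \cref{diestel}. Since pathwidth is at least treewidth, the pathwidth of $G$ is at least $\tw(G) \ge t \ge t-1$, so \cref{diestel}, applied to the $t$-vertex tree $T$, yields a separation $(A,B)$ of $G$ satisfying \ref{p1}, \ref{p2}, and \ref{p3}. By \ref{p2}, $G[A]$ has a model of $T$ all of whose branch sets are subsets of $A$.

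The core of the argument is the inequality $\tw(G[B \setminus A]) \ge \tw(G) - t$. To prove it, take a tree decomposition of $G[B \setminus A]$ of width $\tw(G[B \setminus A])$ and add the $t$ vertices of $A \cap B$ to every bag; because $(A,B)$ is a separation, every edge of $G[B]$ then has both endpoints in a common bag and every vertex still occupies a connected set of bags, so this is a tree decomposition of $G[B]$ of width at most $\tw(G[B \setminus A]) + t$ in which $A \cap B$ lies inside one of the bags. Now glue this to the path decomposition of $G[A]$ given by \ref{p3} — whose width is $t - 1$ and whose last bag is exactly $A \cap B$ — by adding a single edge between the last node of the path and a node of the tree whose bag contains $A \cap B$. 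Since $(A,B)$ is a separation, the result is a tree decomposition of $G = G[A] \cup G[B]$ of width at most $\max(t - 1, \tw(G[B \setminus A]) + t) = \tw(G[B \setminus A]) + t$, which gives the claimed inequality. Hence $\tw(G[B \setminus A]) \ge \tw(G) - t \ge f(H') + 1 > f(H')$, so $G[B \setminus A]$ contains $H'$ as a minor, witnessed by a model all of whose branch sets are subsets of $B \setminus A$.

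Finally, the model of $T$ lies in $A$ and the model of $H'$ lies in $B \setminus A$, and these two sets are disjoint, so their union is a model of $H = T \sqcup H'$ in $G$, as required. I expect the delicate point to be exactly the gluing step above: the naive route of first observing $\tw(G[B]) \ge \tw(G) - t$ and then deleting the $t$ vertices of $A \cap B$ from $G[B]$ charges the separator twice and only proves $f(H) \le f(H-V(T)) + 2\abs{V(T)}$; the improvement to $\abs{V(T)}$ comes from absorbing $A \cap B$ into the decomposition of $G[B \setminus A]$ and exploiting that, by \ref{p3}, $G[A]$ admits a path decomposition whose final bag is $A \cap B$, so the separator is paid for only once.
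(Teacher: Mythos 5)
Your proof is correct and follows essentially the same route as the paper's: apply \cref{diestel} to obtain the separation $(A,B)$, add $A\cap B$ to every bag of a tree decomposition of $G-A = G[B\setminus A]$, and glue to the path decomposition of $G[A]$ via its final bag $A\cap B$. The only difference is cosmetic — you argue in the contrapositive direction (large treewidth implies an $H$-minor) while the paper argues directly (no $H$-minor implies small treewidth) — and your closing remark about paying for the separator only once is exactly the point of invoking \ref{p3}.
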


\begin{proof}
    Let~$G$ be a graph that does not contain~$H$ as a minor and let~$T$ be a tree that is a component of~$H$. 
    If the pathwidth of~$G$ is less than~${\abs{V(T)} - 1}$, then so is its treewidth, and we are done. 
    Otherwise, we apply \cref{diestel} to obtain a separation~${(A,B)}$ of~$G$ satisfying \ref{p1}, \ref{p2}, and \ref{p3}. 
    We know~${G-A}$ does not contain~${H-V(T)}$ as a minor and therefore the treewidth of~${G-A}$ is at most~${f(H-V(T))}$. 
    We take a tree decomposition of~${G-A}$ having width at most~${f(H-V(T))}$, add~${A \cap B}$ to every bag, and combine it with the path decomposition of~${G[A]}$ by adding an edge from the endpoint~$x$ of the path decomposition to a node of the tree. 
    Thus we obtain a tree decomposition of~$G$ of width at most~${f(H-V(T)) + \abs{V(T)}}$. 
\end{proof}

So, we need only show that for any~$s$ and~$d$, there is an integer~$c$ such that if every component of~$H$ has size at most~$s$ and 
there are at most~$\frac{d\abs{V(H)}}{\log \abs{V(H)}}$ non-tree components then~${f(H) \leq c \abs{V(H)}}$. 
We will do so, with~$c$ defined implicitly, momentarily. 

So, we need only consider~$H$ that are the disjoint union of bounded size components, all of which contain a cycle. 
We apply the following lemma \cite[Corollary 2.2]{CHJR2019}. 

\begin{lemma}[{Cames van Batenburg, Huynh, Joret, and Raymond~\cite[Corollary 2.2]{CHJR2019}}]
    \label{eplem}
    For every integer~$r$, there is an integer~$s$ such that for every integer~$k$, every graph of treewidth at least~${s k \log (k+1)}$ contains~$k$ vertex-disjoint subgraphs of treewidth at least~$r$. 
\end{lemma}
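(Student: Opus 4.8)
The plan is to prove the lemma by a greedy argument: repeatedly delete a \emph{small} subgraph of treewidth at least~$r$ and iterate until $k$ such subgraphs have been removed. The heart of the matter is the following \emph{extraction lemma}, which I would isolate first: for every~$r$ there are constants~$c_r$ and~$T_r$ such that every graph~$G$ with~$\tw(G) \ge T_r$ contains a subgraph~$H$ with~$\tw(H) \ge r$ and~$\abs{V(H)} \le c_r \log \tw(G)$. Granting this, the lemma follows by a short iteration and a routine calculation sketched below; all of the genuine work, and the entire reason for the logarithmic factor in the statement, sits in the extraction lemma. (By \cref{duality} one may equivalently phrase everything in terms of brambles, extracting at each step a small vertex set carrying a bramble of order at least~$r+1$; this changes nothing essential.)

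To prove the extraction lemma I would use the polynomial grid-minor theorem (in the form of Chuzhoy and Tan~\cite{CT2020}): once~$\tw(G)$ exceeds a constant depending only on~$r$, $G$ contains an~${r \times r}$ grid as a minor, so in particular $G$ has a subgraph of treewidth at least~$r$ witnessed by pairwise-disjoint branch sets together with connecting paths. One then chooses such a witness as economically as possible, using BFS-trees for branch sets and shortest paths to connect them; the point is to bound the number of vertices used by~$O_r(\log \tw(G))$. The mechanism behind this bound is that a graph of large treewidth cannot be ``locally too sparse'': the extremal obstructions are the high-girth, high-treewidth graphs produced by the probabilistic construction recalled in the introduction, and even those contain a cycle --- hence a subgraph of treewidth at least~$2$ --- on only~$O(\log \tw)$ vertices (namely a shortest cycle), while more generally one can find $r-1$ independent cycles assembled into a treewidth-${\ge}\,r$ subgraph on~$O_r(\log\tw)$ vertices. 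This is where I expect the main obstacle to lie: upgrading ``finitely many vertices'' to the sharp bound~$O_r(\log\tw(G))$. It is also where the logarithm is unavoidable: a graph of treewidth~$\ge t$ and girth~$\ge \epsilon \log t$ has no treewidth-${\ge}\,2$ subgraph on fewer than~$\Omega(\log t)$ vertices, and, exactly as in the proof of \cref{prop:fewcycles}, forcing $k$ disjoint cycles in such a graph requires treewidth~$\Omega(k\log k)$, so the dependence~$k\log(k+1)$ in the statement cannot be improved.

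Given the extraction lemma, set~$t_0 := s k \log(k+1)$ with~$s = s(r)$ to be chosen. Put~$G_1 := G$; having defined~$G_i$ with~$\tw(G_i) \ge T_r$, apply the extraction lemma to obtain a subgraph~$H_i$ with~$\tw(H_i) \ge r$ and~$\abs{V(H_i)} \le c_r \log \tw(G_i) \le c_r \log t_0$, and set~$G_{i+1} := G_i - V(H_i)$. Since deleting~$m$ vertices decreases treewidth by at most~$m$, we get~$\tw(G_{i+1}) \ge \tw(G_i) - c_r \log t_0 \ge t_0 - i\, c_r \log t_0$, so the process produces at least~$k$ disjoint subgraphs provided~$t_0 - k c_r \log t_0 \ge T_r$. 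Using~$\log t_0 = \log(s k \log(k+1)) \le \log s + 2\log k$, this holds as soon as~$s k \log(k+1) \ge k c_r (\log s + 2\log k) + T_r$; taking~$s := 4 c_r$ makes the leading terms dominate for all sufficiently large~$k$, and enlarging~$s$ to absorb the finitely many remaining small values of~$k$ yields a value~$s = s(r)$ for which the inequality holds for every~$k \ge 1$. The resulting~$H_1, \dots, H_k$ are pairwise vertex-disjoint subgraphs of treewidth at least~$r$, which proves the lemma. (As an aside, a cruder route --- take one big grid minor from the grid-minor theorem and partition it into~$k$ disjoint subgrid minors --- also works but, with the currently known grid-minor bounds, only gives a polynomial dependence on~$k$ rather than the tight~$k\log(k+1)$.)
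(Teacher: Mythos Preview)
The paper does not prove this lemma; it quotes it as Corollary~2.2 of~\cite{CHJR2019}. So there is no in-paper argument to compare against, and your proposal has to stand on its own.

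It does not: your ``extraction lemma'' is false as stated. Take the ${m\times m}$ grid and subdivide every edge $n$ times. Subdivision does not change treewidth for graphs of treewidth at least~$2$, so the resulting graph still has treewidth exactly~$m$; but its girth is $4(n{+}1)$, so its smallest subgraph of treewidth at least~$2$ already has more than $4n$ vertices. For any fixed $T_r$ and $c_r$, choosing ${m=T_r}$ and $n$ large violates ${\abs{V(H)}\le c_r\log\tw(G)}$. The obstruction is structural: no function of $\tw(G)$ alone can bound the size of the smallest treewidth-$r$ subgraph, because padding with subdivisions leaves $\tw(G)$ fixed while pushing every such subgraph to arbitrary size. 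Your BFS/shortest-path sketch cannot rescue this, and you rightly flagged this step as the obstacle; in fact the obstacle is fatal for this formulation.

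What~\cite{CHJR2019} actually proves is the tight Erd\H{o}s--P\'osa bound for planar minors: for every planar~$H$ there is a constant~$c_H$ such that every graph either contains $k$ vertex-disjoint $H$-models or has a set of at most $c_H k\log(k+1)$ vertices meeting every $H$-model. The corollary you want then follows in two lines by taking~$H$ to be, say, the ${(r{+}1)\times(r{+}1)}$ grid: if no $k$ disjoint models exist, there is such a hitting set~$X$, the graph $G-X$ has no $H$-minor and hence treewidth at most~$f(H)$, so $\tw(G)\le f(H)+c_H k\log(k+1)$, and one chooses~$s$ accordingly. If you want to salvage an extraction-and-iterate argument, the correct extraction statement (which is essentially what underlies the proof in~\cite{CHJR2019}) is not ``a treewidth-$r$ subgraph on $O_r(\log\tw G)$ vertices'' but something like ``inside a wall of height~$h$ one can find an $H$-model on $O_H(\log h)$ vertices''; the wall, not the ambient graph, is what makes the logarithmic bound possible.
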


We now present the proof of \cref{smallcomponents}. 

\smallcomponents*

\begin{proof}
    By \cref{prop:fewcycles}, it is enough to prove the direction that if~$H$ has at most~${O(\frac{\abs{V(H)}}{\log \abs{V(H)}})}$ components having cycles, then~${f(H) = O(\abs{V(H)})}$. 
    
    Let~${R := \max\{ f(G) \mid G \textrm{ is a planar graph with $r$ vertices} \}}$. 
    Let~$H'$ be the induced subgraph of~$H$ consisting of all components of~$H$ which are not trees. 
    Let~$k$ be the number of components of~$H'$. 
    By \cref{eplem}, there is an integer~$s$ depending only on~$R$ such that every graph of treewidth at least~${s k \log (k+1)}$ contains~$k$ vertex-disjoint subgraphs of treewidth at least~$R$. 
    By the choice of~$R$, we deduce that~${f(H') < s k \log (k+1)}$. 
    Since~${k = O(\abs{V(H)}/\log\abs{V(H)})}$, we have~${f(H') = O(\abs{V(H)})}$. 
    By applying \cref{removingtree} to each tree component of~$H$, we deduce that~${f(H) \leq f(H') + \abs{V(H)-V(H')} = O(\abs{V(H)})}$. 
\end{proof}

\section{%
\texorpdfstring{Excluding a Twisted $\ell$-Prism}%
{Excluding a Twisted l-Prism}}\label{sec:twistedprism}

In this section, we prove \cref{twistedprism1,twistedprism2,no4by4s}. 
Our approach is to try to find two vertex-disjoint cycles to which we can build a twisted $\ell$-prism. 

Here is a lemma implicitly used by Birmel\'e, Bondy, and Reed~\cite{BBR2007a}. 

\begin{lemma}
    \label{menger}
    Let~$G$ be a graph with a bramble~$\Beta$. 
    Let~$S$ and~$T$ be two vertex-disjoint subgraphs 
    such that for each of~$V(S)$ and~$V(T)$ there is a subbramble of order at least~$\ell$ of~$\mathcal{B}$ each member of which intersects~$V(S)$ or~$V(T)$, respectively. 
    Then there are~$\ell$ vertex-disjoint paths from~$V(S)$ to~$V(T)$. 
    In particular, if both~$S$ and~$T$ are cycles, then~$G$ has a twisted $\ell$-prism as a minor. 
\end{lemma}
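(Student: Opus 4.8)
The plan is to deduce this directly from Menger's theorem applied to a suitable auxiliary graph, together with \cref{cyclebramble} to justify the ``in particular'' clause; the two subbrambles are used only as certificates that $V(S)$ and $V(T)$ cannot be cheaply separated from the rest of the bramble.

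First I would argue that there are $\ell$ vertex-disjoint paths from $V(S)$ to $V(T)$. Suppose not; then by Menger's theorem there is a set $Z$ of fewer than $\ell$ vertices separating $V(S)$ from $V(T)$ in $G$. Let $\mathcal{B}_S$ and $\mathcal{B}_T$ be the two subbrambles of order at least $\ell$ whose every member meets $V(S)$, respectively $V(T)$. Since $|Z| < \ell \le \ord(\mathcal{B}_S)$, the set $Z$ is not a hitting set for $\mathcal{B}_S$, so there is some $B_S \in \mathcal{B}_S$ disjoint from $Z$; similarly there is some $B_T \in \mathcal{B}_T$ disjoint from $Z$. Now $G[B_S]$ is connected and meets $V(S)$ but avoids $Z$, so $B_S$ lies entirely on the $V(S)$-side of the separation (more precisely, in the component structure of $G - Z$, the set $B_S$ lies in the union of components meeting $V(S)$); likewise $B_T$ lies on the $V(T)$-side. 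But $\mathcal{B}_S$ and $\mathcal{B}_T$ are subbrambles of the same bramble $\mathcal{B}$, so $B_S, B_T \in \mathcal{B}$ and hence $G[B_S \cup B_T]$ is connected — a connected subgraph of $G - Z$ meeting both sides of the separation, contradicting that $Z$ separates $V(S)$ from $V(T)$. Hence no such $Z$ exists and the $\ell$ disjoint paths follow.

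Second, for the ``in particular'' clause, suppose $S$ and $T$ are cycles. Take the $\ell$ vertex-disjoint $V(S)$--$V(T)$ paths $P_1, \dots, P_\ell$; by taking subpaths we may assume each $P_i$ meets $V(S) \cup V(T)$ exactly in its two endpoints and is otherwise disjoint from $S$, $T$, and the other paths. Contracting $S$ to a cycle, $T$ to a cycle, and each $P_i$ to a single edge exhibits a twisted $\ell$-prism as a minor of $G$: the two cycles of length $\ell$ come from subdividing (or contracting along) $S$ and $T$ at the $\ell$ attachment points of the paths, and the matching of size $\ell$ is given by the contracted paths. One should note that $S$ and $T$ each have at least $\ell$ vertices here; this is where the hypothesis implicitly forces $S$ and $T$ to be large enough, and it is guaranteed because the endpoints of the $\ell$ disjoint paths are distinct vertices on each cycle.

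The main obstacle — really the only place that needs care — is the first step: making rigorous that a small vertex cut cannot simultaneously ``isolate'' both a bramble element meeting $V(S)$ and one meeting $V(T)$ while separating $V(S)$ from $V(T)$. The key point to get right is that $\mathcal{B}_S$ and $\mathcal{B}_T$ are required to be subbrambles of the \emph{same} bramble $\mathcal{B}$ (so cross-connectivity $G[B_S \cup B_T]$ connected is available); if they were unrelated brambles the argument would break. A secondary, routine point is the bookkeeping in the minor step to ensure the attachment points on each cycle are distinct and that the cycles retain length exactly $\ell$ after contraction, but this is standard and I would state it briefly rather than belabor it.
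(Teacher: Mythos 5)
Your proof is correct and follows essentially the same approach as the paper's: Menger's theorem produces a small separator $Z$, and the cross-connectivity of bramble elements rules it out. The paper derives the contradiction slightly differently --- it picks a single element $B\in\Beta$ disjoint from $Z$ and shows $Z$ would then be a too-small hitting set for the subbramble of elements meeting $V(S)$, whereas you pick one witness from each subbramble and contradict connectedness of $G[B_S\cup B_T]$ directly --- but both hinge on the same property of brambles, and your explicit handling of the ``in particular'' clause is a correct elaboration of what the paper leaves implicit.
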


\begin{proof}
    Suppose for a contradiction that there are no~$\ell$ vertex-disjoint paths between~$V(S)$ and~$V(T)$. 
    By Menger's Theorem, there is a cutset~$X$ of size less than~$\ell$ separating~$V(S)$ from~$V(T)$.
    Then there exists an element~$B$ of~$\Beta$ disjoint from~$X$ because the order of~$\Beta$ is at least~$\ell$. 
    Since~$G[B]$ is connected, $V(S)$ or~$V(T)$ does not intersect the component of~${G-X}$ containing~$B$. 
    By symmetry, we may assume that~$S$ does not intersect the component of~${G-X}$ containing~$B$. 
    Let~$\Beta'$ be the subbramble of~$\Beta$ consisting of all elements intersecting~$S$. 
    Since all elements of~$\Beta'$  either intersect~$B$ or are joined by an edge to~$B$, they all intersect~$X$. 
    But then~$X$ is a hitting set for~$\Beta'$ and therefore~$\Beta'$ has order at most~${\abs{X} < \ell}$, which is a contradiction. 
\end{proof}

The following theorem due to Seymour~\cite{Seymour1980c} will allow us to specify the planar minor we should consider. 

\begin{theorem}[Seymour~{\cite[4.1]{Seymour1980c}}]
    \label{2DRP}
    Let~$G$ be a graph and let~${T = \{s_1,t_1,s_2,t_2\}}$ be a subset of~$V(G)$. 
    If~$G$ does not contain two vertex-disjoint paths, 
    one linking~$s_1$ to~$t_1$ and the other linking~$s_2$ to~$t_2$, 
    then there is a graph ~$J$ satisfying all of the following. 
    \begin{enumerate}[label=(S\arabic*)]
        \item \label{s1} ${T \subseteq V(J) \subseteq V(G)}$.
        \item \label{s2} For every component~$U$ of~${G-V(J)}$, the set~${S_U := N_G(V(U))}$ has at most three vertices. 
        \item \label{s3} $J$ is obtained from~${G[V(J)]}$ by adding edges minimally so that each~$S_U$ is a clique for every component~$U$ of~${G-V(J)}$.
        \item \label{s4} $J$ has an embedding in the plane with the vertices of~$T$ appearing around some face of the embedding in the order~$s_1$,~$s_2$,~$t_1$,~$t_2$. 
    \end{enumerate}
\end{theorem}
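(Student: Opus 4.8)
The plan is to prove this by induction on $\abs{V(G)}$, reducing to the case where $G$ is essentially $4$-connected; the reduction will simultaneously build~$J$. The key reduction step: suppose $(A,B)$ is a separation of~$G$ with $X := A\cap B$ of size at most three, with $T\subseteq A$, and with $B\setminus A$ nonempty; let $W$ be a component of $G-X$ contained in $B\setminus A$ with $N_G(W)=X$, and let $G'$ be obtained from $G-W$ by adding edges so that $X$ becomes a clique. Then $G'$ contains two disjoint paths, one from $s_1$ to~$t_1$ and one from $s_2$ to~$t_2$, if and only if~$G$ does. For one direction, replace each newly added edge inside~$X$ used by one of the paths by a path through the connected set~$W$; since $\abs{X}\le 3$ the two disjoint paths cannot both use edges inside~$X$, so only one reroute is needed and disjointness is preserved. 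For the other direction, contract the portion of each path lying in $B\setminus A$ to a single edge of the clique on~$X$; again $\abs{X}\le 3$ forces at most one path to enter $B\setminus A$. Hence, assuming $G$ has no two such disjoint paths, neither does~$G'$, and we may apply induction to~$G'$ to obtain a graph~$J'$ as in the statement.

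Iterating this step --- peeling off components attached through cutsets of size at most three, which amounts to decomposing~$G$ along its small cuts into nearly $4$-connected pieces --- leaves a $4$-connected graph $J_0\supseteq G[T]$; together with the recorded discarded components and their neighbourhoods, $J_0$ is the candidate for~$J$. Verifying \ref{s1}, \ref{s2}, and \ref{s3} for it is routine, but requires care: one should choose the reductions so that every cutset~$X$ remains inside~$V(J_0)$, and should add only the minimal fill-in (making each recorded $S_U$, not all of~$X$, a clique). It then remains to prove the core assertion: \emph{if $J_0$ is $4$-connected and contains no two disjoint paths linking $s_1$ to~$t_1$ and $s_2$ to~$t_2$, then $J_0$ is planar and $s_1,s_2,t_1,t_2$ appear on a common face in this cyclic order.}

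I would prove the core assertion by contraposition, in three cases. If $J_0$ is planar with $s_1,s_2,t_1,t_2$ on a common face but in a cyclic order in which $\{s_1,t_1\}$ and $\{s_2,t_2\}$ do not interleave, the two disjoint paths can be drawn as non-crossing chords in that face. If $J_0$ is planar but no single face contains all four terminals, then, using that the embedding is essentially unique (Whitney) and $4$-connectivity, the two disjoint paths are produced by an elementary but fiddly analysis of the face structure. The decisive case is $J_0$ non-planar: here one invokes the fact that every $4$-connected non-planar graph is $2$-linked --- it contains two disjoint paths joining any two prescribed pairs among any four given vertices --- and applies it to the pairs $\{s_1,t_1\}$, $\{s_2,t_2\}$ to contradict the hypothesis. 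That fact is itself proved by locating a subdivision of $K_5$ or $K_{3,3}$, joining the four terminals to its branch vertices by disjoint paths via Menger's theorem and $4$-connectivity, and then rerouting around the symmetric substructure to realise the required pattern.

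The main obstacle is exactly this last ingredient, that $4$-connected non-planar graphs are $2$-linked: the rerouting analysis around a $K_5$- or $K_{3,3}$-subdivision --- controlling how the attachment paths meet the subdivision and one another, with many configurations to dispatch --- is the technical heart of the argument and is what makes \cite{Seymour1980c} a substantial paper. A secondary, more bookkeeping-type difficulty is making the reduction meet \ref{s1}--\ref{s3} precisely, namely that the neighbourhood of every discarded component lands in~$V(J)$ and that $J$ equals $G[V(J)]$ plus a genuinely minimal set of fill-in edges; this is kept under control by performing the small-cut reductions in a disciplined, for instance maximal, order.
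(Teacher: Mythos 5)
The paper does not prove this theorem: it is quoted from Seymour~\cite{Seymour1980c} (a result obtained independently by Shiloach and by Thomassen around the same time) and used as a black box, so there is no internal proof to compare your sketch against. Judged on its own, your outline follows the standard Thomassen-style route: peel off pieces behind cuts of size at most three, replace each by a clique on its neighbourhood, and reduce to a core where the dichotomy is between a planar embedding with the terminals on a common face in the interleaving order $s_1,s_2,t_1,t_2$ and the presence of the two disjoint paths, with the theorem that $4$-connected non-planar graphs are $2$-linked doing the heavy lifting. You are right that this last theorem is the technical heart of the matter.

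There is a genuine gap in the reduction, though. Your inductive step applies only to separations $(A,B)$ with $\abs{A\cap B}\le 3$ and $T\subseteq A$. Cuts that \emph{split} the terminal set are not covered, and they do occur: a $4$-cycle on exactly the vertices $s_1,s_2,t_1,t_2$, in that cyclic order, has no two disjoint linking paths, is only $2$-connected, and admits nothing to peel. In general, once your reduction is exhausted, the remaining graph $J_0$ is only guaranteed to have the property that every component of $J_0-X$ meets $T\setminus X$ whenever $\abs{X}\le 3$; that is far weaker than $4$-connectivity and still permits cut vertices. So the claim that the reduction ``leaves a $4$-connected graph $J_0\supseteq G[T]$'' is false, and the core assertion you formulate does not reach all residual cases. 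The proofs of Seymour, Shiloach, and Thomassen all expend real effort on the low-connectivity configurations where a $\le 3$-cut separates terminals from terminals, typically routing terminals onto the cut via Menger's theorem and then gluing planar embeddings of the pieces compatibly across it; this is not routine bookkeeping. With that case supplied, and with the $2$-linkage theorem for $4$-connected non-planar graphs taken as given, your sketch is a faithful outline of a correct proof.
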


The following easy corollary of this result shows that we can insist that~$J$ is a minor of~$G$.

\begin{corollary}\label{2DRP-cor}
    Let~$G$ be a graph and let~${T = \{s_1,t_1,s_2,t_2\}}$ be a subset of~$V(G)$.
    If~$G$ does not contain two vertex-disjoint paths 
    one linking~$s_1$ to~$t_1$ and the other linking~$s_2$ to~$t_2$, then there is a graph~$J$  satisfying \ref{s1}, \ref{s2}, \ref{s3}, \ref{s4} and furthermore~$J$ is a minor of~$G$. 
\end{corollary}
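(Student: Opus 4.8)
The statement to prove is \cref{2DRP-cor}: that in \cref{2DRP}, the graph $J$ can additionally be taken to be a minor of $G$.

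\medskip

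The plan is to start from the graph $J$ guaranteed by \cref{2DRP} and to realize each added clique edge by contracting part of the corresponding component $U$, rather than by literally adding an abstract edge. The point is that every edge of $J$ that is not already an edge of $G[V(J)]$ lies inside some clique $S_U = N_G(V(U))$ for a component $U$ of $G - V(J)$, where $\abs{S_U} \le 3$ by \ref{s2}. So the only edges we need to create artificially are edges joining pairs of vertices in a set $S_U$ of size at most three. First I would fix, for each component $U$, a spanning tree of $G[V(U)]$; since $U$ is connected and each vertex of $S_U$ has a neighbour in $V(U)$, the subgraph $G[V(U) \cup S_U]$ is connected, and contracting a spanning tree of $G[V(U)]$ (together with, if needed, one edge from $V(U)$ to each vertex of $S_U$) turns $V(U)$ into a single vertex adjacent to all of $S_U$; alternatively, we can route $\binom{\abs{S_U}}{2} \le 3$ internally disjoint paths through $U$ between the vertices of $S_U$ — but disjointness is the issue, so the cleaner route is contraction.

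\medskip

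The cleanest execution: process the components $U$ one at a time. For a fixed $U$ with $S_U = \{a\}$ or $\{a,b\}$ or $\{a,b,c\}$, the graph $G[V(U) \cup S_U]$ is connected and contains $S_U$; contract a spanning connected subgraph of $G[V(U) \cup S_U]$ that contains all of $S_U$ down to a single vertex — wait, that would merge the vertices of $S_U$, which we do not want. Instead: delete all but $\abs{S_U}$ suitably chosen vertices/edges so that what remains of $G[V(U)]$ is a forest with at most $\abs{S_U} - 1 \le 2$ components, each attached to distinct vertices of $S_U$, realizing a spanning tree on $S_U$; contracting this forest adds exactly the edges of that tree among $S_U$. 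Since $\abs{S_U} \le 3$, a spanning tree on $S_U$ has at most $2$ edges, but a clique on $3$ vertices has $3$ edges, so one edge of the triangle could be missing. Here is the fix using \ref{s3} more carefully, or simply: if $G[V(U)]$ together with the attachment edges to $S_U$ does not already let us contract to obtain the full triangle on $S_U$, then two of the three vertices of $S_U$ have no internally-$U$-disjoint connection realizing the third edge; but in fact for the purpose of \cref{2DRP-cor} we do not need $J$ itself to be a minor with every clique edge present — rather, we want a minor of $G$ that still satisfies \ref{s1}–\ref{s4}. So I would instead define $J'$ to be the minor of $G$ obtained by contracting, inside each $U$, a maximal set of edges that does not merge two vertices of $S_U$; then argue $J'$ still satisfies \ref{s1}–\ref{s4} with respect to the same $T$, because contraction only adds edges among $S_U$ and does not destroy the planar embedding (the added edges can be drawn inside the face previously occupied by $U$), does not create new neighbourhoods violating \ref{s2}, and $J'$ is squeezed between $G[V(J)]$ and $J$, hence its added edges still form cliques on the $S_U$'s after one more minimal addition — i.e. $J$ is obtained from $J'$ by adding abstract edges minimally, and $J'$ is a minor of $G$, and $J'$ itself embeds in the plane suitably; replacing $J$ by $J'$ gives a graph satisfying all four conditions that is a minor of $G$.

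\medskip

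The main obstacle is exactly the bookkeeping in the previous paragraph: making sure that after contracting inside the components we still have a \emph{planar} embedding with $T$ on a common face in the order $s_1, s_2, t_1, t_2$, and that conditions \ref{s2} and \ref{s3} survive. For planarity: take the plane embedding of $J$ from \ref{s4}; each component $U$ of $G - V(J)$, when we un-contract, needs to be drawn, but we are going the other way — we start from $G$, which need not be planar, and contract $U$'s to single vertices or small subgraphs. The right way to see planarity of $J'$ is that $J'$ is a \emph{subgraph} of $J$ plus possibly none — more precisely $E(G[V(J)]) \subseteq E(J') \subseteq E(J)$ as edge sets on the vertex set $V(J)$, and $J$ is planar with the desired face structure, so every subgraph of $J$ containing $G[V(J)]$ is planar, and since $T$ bounds a face of $J$ it still lies on a face of the subgraph $J'$ (deleting edges only enlarges faces) in a consistent cyclic order. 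Thus \ref{s4} is immediate for $J'$ once we know $V(J') = V(J)$ and $E(J') \subseteq E(J)$. Condition \ref{s2} is inherited since the components of $G - V(J')= G-V(J)$ are unchanged (we only contracted within components, and in $J'$ those contracted blobs are no longer present as vertices — we must be slightly careful, but each component of $G - V(J)$ still has $N_G$ of size $\le 3$, and this set is unchanged). Condition \ref{s3} for $J$ relative to $J'$: by maximality of the contracted sets, every edge of $J$ not in $J'$ joins two vertices of some $S_U$, so $J$ is obtained from $J'$ by adding such edges minimally to make each $S_U$ a clique, which is condition \ref{s3} with $J'$ in the role of $G[V(J)]$. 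I would phrase the final corollary as: the graph $J'$ (a minor of $G$) can be taken in place of $J$, since it satisfies \ref{s1}–\ref{s4}; equivalently, $J$ itself is a minor of $G$ after possibly the harmless addition of clique edges that are forced by \ref{s3} anyway, but the robust statement is the one about $J'$.
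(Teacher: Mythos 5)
Your overall idea---realize the abstract clique edges on each $S_U$ by contracting within the component $U$---is the right starting point, and it is also where the paper begins. But the proposal has a genuine gap at precisely the step you flag and then wave away: you cannot in general realize all of $K_{|S_U|}$ on $S_U$ as a minor of $G[V(U)\cup S_U]$, and when you cannot, your fallback graph $J'$ does \emph{not} satisfy \ref{s3}. Condition \ref{s3} requires that in the output graph each $S_U$ is a clique; if $|S_U|=3$ and $G[V(U)\cup S_U]$ is (say) a tree, no contraction inside $U$ produces the third triangle edge, so $J'$ is missing it and $S_U$ is not a clique in $J'$. Your rescue---``\ref{s3} with $J'$ in the role of $G[V(J)]$''---changes the statement rather than verifying it, and the clique property is genuinely used downstream (in \cref{twisted-or-planar} the argument that $\Beta^*$ is a bramble relies on $S_U$ being a clique in $J$). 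You also need $V(J')=V(J)$, which requires explaining what happens to the contracted blob (it must be absorbed into one vertex of $S_U$ or deleted), another detail left implicit.

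The paper's proof supplies exactly the ingredients needed to rule out the bad case you leave open. First it inducts on $|V(G)|$ to eliminate cutsets of size at most $2$ separating a component from $T$ (replacing $G$ by the smaller graph $G'$ obtained by deleting such a component and completing the cut into a clique, which is itself a minor of $G$). Then, crucially, it chooses $J$ from \cref{2DRP} with $V(J)$ \emph{maximal}. Maximality forces $G[V(U)\cup S_U]$ not to be a tree---otherwise it could be drawn into the face of $J$ with $S_U$ on its boundary, enlarging $V(J)$---so $U$ contains a cycle $C_U$. Since no cutset of size at most $2$ separates $U$ from $T$, Menger's theorem gives three vertex-disjoint paths from $C_U$ to $S_U$, and the cycle plus three disjoint paths yields the rooted $K_3$ minor on $S_U$ inside $G[V(U)\cup S_U]$. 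That is the missing argument: it shows the clique edges are always realizable, so $J$ itself (not a weaker $J'$) is a minor of $G$. Without the maximality choice and the no-small-cutset reduction, the claim simply is not true component by component, which is why your proposal does not close.
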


\begin{proof}
    We proceed by the induction on~$\abs{V(G)}$. 
    Suppose that~$G$ has a minimal cutset~$S$ of size at most two separating some component~$U$ of~${G -S}$ from~$T$. 
    Then let~$G'$ be the graph obtained from~${G-V(U)}$ by adding an edge if necessary to turn~$S$ into a clique. 
    By the induction hypothesis, $G'$ has a minor~$J$ satisfying \ref{s1}, \ref{s2}, \ref{s3}, \ref{s4} for~$G'$. 
    Then it follows that~$J$ satisfies \ref{s1}, \ref{s2}, \ref{s3}, \ref{s4} for~$G$, regardless of the size of~${S \cap V(J)}$. 
    Now observe that~$J$ is a minor of~$G$.
    So, we can assume that no cutset of size at most~$2$ separates a component of~${G-S}$ from~$T$.
    
    We choose a graph~$J$ guaranteed to exist by \cref{2DRP} with maximal~$V(J)$. 
    If~${V(J) = V(G)}$, then~${J = G}$ and we are done. 
    Otherwise, consider any component~$U$ of~${G-V(J)}$. 
    Observe that~${G[V(U) \cup S_U]}$ cannot be a tree because then we could draw it in the plane with~$S_U$ on the infinite face, contradicting the maximality of~$V(J)$. 
    So, $U$ contains a cycle~$C_U$. 
    By the previous paragraph and Menger's theorem, there are three vertex-disjoint paths from~$C_U$ to~$S_U$. 
    It follows that~$J$ is a minor of~$G$. 
\end{proof}

The following theorem will allow us to find the desired structure in that planar minor. Initially, this theorem was proved by Robertson, Seymour, and Thomas~\cite{RST1994} with a slightly worse bound.

\begin{theorem}[Gu and Tamaki~\cite{GT2012}]
    \label{planartw}
    Let~$g$ be a positive integer.
    Every planar graph of treewidth at least~${\frac{9}{2}g-5}$
    has a~${g \times g}$ grid as a minor. 
\end{theorem}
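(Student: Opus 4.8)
The plan is to prove the contrapositive: if $G$ is planar and has no $g \times g$ grid minor, then $\tw(G) < \frac{9}{2}g - 5$. Since the $g\times g$ grid is planar with branchwidth linear in $g$, the natural parameter to work with is \emph{branchwidth} rather than treewidth, because branchwidth is essentially self-dual for planar graphs via the medial graph. Using the classical inequality $\tw(G) + 1 \le \frac{3}{2}\,\mathrm{bw}(G)$ of Robertson and Seymour, it suffices to show that a planar graph with no $g\times g$ grid minor has branchwidth at most $3(g-1)$: this gives $\tw(G) \le \tfrac{3}{2}\cdot 3(g-1) - 1 = \tfrac{9g-11}{2} < \tfrac{9}{2}g - 5$. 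Equivalently, the task is to show that a planar graph of branchwidth at least $3g-2$ contains the $g\times g$ grid as a minor; this is exactly the planar branchwidth--grid bound of Gu and Tamaki, which improves the constant $4$ of Robertson, Seymour, and Thomas to $3$.

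For that core statement I would argue through \emph{tangles}. A graph of branchwidth at least $b$ admits a tangle of order $b$, and in a planar graph such a tangle is located by the embedding: every low-order separation it contains is witnessed by a closed curve in the plane --- a \emph{noose} --- meeting $G$ in a small vertex set, and the tangle consistently picks one side of each noose. From the tangle I would build a maximal family of pairwise non-crossing, tangle-respected nooses, and then use the large order of the tangle to thin this family to a nested sequence of $\Theta(b)$ concentric nooses together with enough pairwise vertex-disjoint radial paths joining consecutive rings. This exhibits a subdivision of a cylindrical grid $C_h$ ($h$ concentric cycles plus $\Theta(h)$ radial paths) as a minor of $G$, with $h$ proportional to $b$. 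One then finishes with the elementary observation that the $h\times h$ grid is a minor of $C_h$ --- cut the cylinder along one radial path and contract --- after calibrating the proportionality so that $h \ge g$ whenever $b \ge 3g-2$.

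The main difficulty is the \emph{constant}. A soft execution of the tangle-to-cylinder step, essentially the argument of Robertson, Seymour, and Thomas, wastes roughly a factor of $4$: it converts $b$ units of tangle order into about $b/4$ concentric rings, yielding only $\tw(G) < 6g$. Reaching $\tfrac{9}{2}g$, i.e.\ branchwidth below $3g$, is the Gu--Tamaki improvement, and it requires routing the successive nooses so that almost every vertex of one separator is reused by the next, so that $b$ units of order buy $b/3$ rings rather than $b/4$. Managing this bookkeeping cleanly is most naturally done on the dual side, transferring $\mathrm{bw}(G)$ (through the medial graph) into a \emph{carving-width}-type quantity and analysing the relevant min--max over noose families directly, or else via a careful amortised charging argument on a maximal non-crossing noose family. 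Carrying out this optimisation, and tracking the additive term so that the threshold lands at exactly $\tfrac{9}{2}g - 5$, is the substance of the argument; the reductions above are routine.
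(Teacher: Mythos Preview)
The paper does not prove this theorem at all: it is quoted from Gu and Tamaki and used as a black box (in the proofs of \cref{twistedprism1}, \cref{twistedprism2}, and \cref{fewcycles2}). There is therefore no ``paper's own proof'' to compare your proposal against.

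As for the proposal on its own terms: the reduction you outline---pass from treewidth to branchwidth via $\tw(G)+1\le \tfrac{3}{2}\,\mathrm{bw}(G)$, then show that a planar graph of branchwidth at least $3g-2$ has a $g\times g$ grid minor---is indeed the architecture of the Gu--Tamaki argument, and your arithmetic checking that $\mathrm{bw}(G)\le 3(g-1)$ yields $\tw(G)\le (9g-11)/2 < \tfrac{9}{2}g-5$ is correct. You are also right that the entire content lies in improving the Robertson--Seymour--Thomas constant from $4$ to $3$, and you are candid that your sketch does not actually perform this step but only gestures at ``careful amortised charging'' or a dual carving-width analysis. So what you have written is an accurate roadmap, not a proof; the substantive combinatorics (the part that distinguishes Gu--Tamaki from the earlier $4g$ bound) is not present.
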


\begin{lemma}
    \label{lem:pathpartition}
    Let~$c_1$,~$c_2$ be positive integers.
    Let~$G$ be a graph and let~$P$ be a path from~$x$ to~$y$. 
    Let~$\Beta$ be a bramble of order at least~${c_1+c_2}$ in~$G$. 
    If~$V(P)$ intersects every element of~$\Beta$, 
    then~$P$ can be partitioned into two edge-disjoint subpaths~$P_1$, $P_2$ such that 
    \begin{enumerate}[label=(\roman*)]
        \item ${x \in V(P_1)}$, ${y \in V(P_2)}$, 
        \item the subbramble~$\Beta_1$ of~$\Beta$ consisting of all elements of~$\Beta$ intersecting~$V(P_1)$ has order exactly~$c_1$, 
        \item the subbramble~$\Beta_1'$ of~$\Beta$ consisting of all elements of~$\Beta$ intersecting $V(P_1)-V(P_2)$ has order at most~$c_1-1$, 
        \item  the subbramble $\Beta-\Beta_1$ of $\Beta$ has order at least $c_2$ and $V(P_2)-V(P_1)$ intersects every element of $\Beta-\Beta_1$, and 
        \item the subbramble~${\Beta-\Beta_1'}$ of~$\Beta$ has order at least~$c_2+1$ and~$V(P_2)$ intersects every element of~${\Beta-\Beta_1'}$.
    \end{enumerate}
\end{lemma}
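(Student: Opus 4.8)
The plan is to walk along the path~$P$ from~$x$ to~$y$ and track how the "hitting power" of the traversed prefix grows. Concretely, write~$P = v_0 v_1 \cdots v_m$ with~$v_0 = x$ and~$v_m = y$, and for each~$i \in \{0,\dots,m\}$ let~$Q_i$ denote the prefix~$v_0 v_1 \cdots v_i$. Define~$\mathcal{B}^{(i)}$ to be the subbramble of~$\mathcal{B}$ consisting of all elements meeting~$V(Q_i)$, and let~$g(i) := \ord(\mathcal{B}^{(i)})$. Since~$V(P)$ meets every element of~$\mathcal{B}$ by hypothesis, $\mathcal{B}^{(m)} = \mathcal{B}$, so~$g(m) = \ord(\mathcal{B}) \geq c_1 + c_2$; also~$g$ is nondecreasing in~$i$ because prefixes grow. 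The key elementary fact is that~$g$ increases by at most~$1$ at each step: a hitting set for~$\mathcal{B}^{(i-1)}$ together with the single vertex~$v_i$ is a hitting set for~$\mathcal{B}^{(i)}$, since every element of~$\mathcal{B}^{(i)}$ either already meets~$V(Q_{i-1})$ or else meets~$V(Q_i)\setminus V(Q_{i-1}) \subseteq \{v_i\}$ (using that~$P$ is a path, so the only "new" vertex is~$v_i$). Hence~$g(i) \leq g(i-1) + 1$. Since~$g(0) \geq 1$ (the vertex~$x$ lies in some element of~$\mathcal{B}$, as any hitting set must be nonempty — or, if~$g(0)=0$, just note the jump argument still delivers the value~$c_1$) and~$g$ is a nondecreasing integer sequence climbing to at least~$c_1 + c_2 \geq c_1$ in unit steps, there is a \emph{first} index~$j$ with~$g(j) = c_1$. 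We then set~$P_1 := Q_j = v_0 \cdots v_j$ and~$P_2 := v_j v_{j+1} \cdots v_m$; these are edge-disjoint subpaths of~$P$ sharing only the vertex~$v_j$, and~$x \in V(P_1)$, $y \in V(P_2)$, giving~(i).

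For~(ii) we have~$\mathcal{B}_1 = \mathcal{B}^{(j)}$ by definition, so~$\ord(\mathcal{B}_1) = g(j) = c_1$ exactly. For~(iii), note~$V(P_1)\setminus V(P_2) = V(Q_{j-1}) = V(Q_j)\setminus\{v_j\}$, so~$\mathcal{B}_1'$ is precisely~$\mathcal{B}^{(j-1)}$ (its elements meeting~$V(Q_{j-1})$), hence~$\ord(\mathcal{B}_1') = g(j-1) \leq c_1 - 1$ by minimality of~$j$. (If~$j = 0$ then~$V(P_1)\setminus V(P_2) = \emptyset$ and~$\mathcal{B}_1'$ is empty, of order~$0 \leq c_1 - 1$ since~$c_1 \geq 1$.)

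It remains to handle~(iv) and~(v), which concern the "complementary" brambles~$\mathcal{B} \setminus \mathcal{B}_1$ and~$\mathcal{B} \setminus \mathcal{B}_1'$. First, orders: since~$\mathcal{B}_1 = \mathcal{B}^{(j)}$ consists exactly of the elements meeting~$V(P_1)$, the set~$\mathcal{B}\setminus\mathcal{B}_1$ consists exactly of the elements \emph{disjoint} from~$V(P_1)$, hence disjoint from~$\{v_0,\dots,v_j\}$; every such element nonetheless meets~$V(P)$, and therefore meets~$V(P)\setminus V(P_1) = \{v_{j+1},\dots,v_m\} \subseteq V(P_2)\setminus V(P_1)$. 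This proves the second half of~(iv) (every element of~$\mathcal{B}-\mathcal{B}_1$ is met by~$V(P_2)\setminus V(P_1)$), and likewise the analogous statement for~(v): $\mathcal{B}\setminus\mathcal{B}_1'$ consists of elements disjoint from~$V(P_1)\setminus V(P_2) = \{v_0,\dots,v_{j-1}\}$, so each meets~$V(P)\setminus\{v_0,\dots,v_{j-1}\} = \{v_j,\dots,v_m\} = V(P_2)$. For the order bounds, I would use the standard sub-additivity of bramble order under partition into two subbrambles: if~$\mathcal{B} = \mathcal{B}' \cup \mathcal{B}''$, then a minimum hitting set for~$\mathcal{B}$ restricted appropriately, or rather, $\ord(\mathcal{B}) \leq \ord(\mathcal{B}') + \ord(\mathcal{B}'')$, since the union of a hitting set for~$\mathcal{B}'$ and one for~$\mathcal{B}''$ hits~$\mathcal{B}$. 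Applying this to~$\mathcal{B} = \mathcal{B}_1 \cup (\mathcal{B}\setminus\mathcal{B}_1)$ gives~$\ord(\mathcal{B}\setminus\mathcal{B}_1) \geq \ord(\mathcal{B}) - \ord(\mathcal{B}_1) \geq (c_1+c_2) - c_1 = c_2$, which is~(iv); applying it to~$\mathcal{B} = \mathcal{B}_1' \cup (\mathcal{B}\setminus\mathcal{B}_1')$ gives~$\ord(\mathcal{B}\setminus\mathcal{B}_1') \geq (c_1+c_2) - (c_1-1) = c_2+1$, which is~(v).

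The main obstacle, and the part deserving the most care, is the bookkeeping around the definitions of the various subbrambles in~(iii)--(v): one must be scrupulous that "elements meeting~$V(P_1)\setminus V(P_2)$" really is~$\mathcal{B}^{(j-1)}$ and not~$\mathcal{B}^{(j)}$, using that~$P$ is a path so that~$P_1$ and~$P_2$ overlap in exactly the one vertex~$v_j$; and that the complementary brambles $\mathcal{B}-\mathcal{B}_1$, $\mathcal{B}-\mathcal{B}_1'$ contain exactly the elements avoiding the relevant vertex sets, so that the "$V(P_2)$ intersects every element" claims follow from the global hypothesis that~$V(P)$ is a hitting-set-complement, i.e.\ meets every element of~$\mathcal{B}$. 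The unit-jump inequality~$g(i) \leq g(i-1)+1$ and the sub-additivity of order are both one-line arguments, so no serious computation is involved; the lemma is essentially a discrete intermediate-value argument dressed up with bramble terminology.
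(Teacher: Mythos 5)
Your proof is correct and takes essentially the same route as the paper: your discrete intermediate-value search for the first prefix whose subbramble reaches order~$c_1$ is exactly the paper's ``choose a minimal subpath~$P_1$ starting at~$x$ such that~$\Beta_1$ has order at least~$c_1$'', and both proofs finish~(iv) and~(v) by the same subadditivity of bramble order. (One small slip: the parenthetical justification that~$g(0)\ge 1$ because ``any hitting set must be nonempty'' is not valid --- nonemptiness of hitting sets does not force~$x$ into a bramble element --- but as you yourself note, the argument only needs~$g(0)\le 1\le c_1$, which holds since~$\{x\}$ hits every element of~$\mathcal{B}^{(0)}$.)
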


\begin{proof}
    We choose a minimal subpath~$P_1$ of~$P$ starting at~$x$ such that~$\Beta_1$ has order at least~$c_1$. 
    Then~$\Beta_1$ has order exactly~$c_1$ because otherwise, we could shorten~$P_1$ by removing its last vertex, contradicting its minimality. 
    By the minimality, $\Beta_1'$ has order at most~$c_1-1$.
    Note that there are~$c_1$ vertices intersecting all members of~$\Beta_1$ and therefore the order of~${\Beta-\Beta_1}$ is at least~$c_2$ because the order of~$\Beta$ is at least~${c_1+c_2}$. 
    Similarly, the order of~${\Beta-\Beta_1'}$ is at least~$c_2+1$.
\end{proof}

\begin{lemma}
    \label{twisted-or-planar}
    Let~$\ell_1$, $\ell_2$ be positive integers.
    If a graph~$G$ has a bramble~$\Beta$ of order~${\ell \geq \ell_1 + \ell_2 + 5}$, 
    then at least one of the following holds. 
    \begin{enumerate}[label=(\arabic*)]
        \item \label{itm:twisted} $G$ has two vertex-disjoint cycles~$C_1$, $C_2$ such that for each~${i \in \{1,2\}}$, 
        the subbramble of~$\Beta$ consisting of elements intersecting~$C_i$ has order at least~$\ell_i$. 
        \item \label{itm:planar} $G$ has a planar minor~$J$ having a bramble of order at least~${\ell-\ell_1-\ell_2+2}$. 
    \end{enumerate}
\end{lemma}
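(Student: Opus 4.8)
The plan is to start with a cycle $C$ meeting every element of $\Beta$, which exists by \cref{cyclebramble} since $\ord(\Beta)\ge\ell\ge 3$. Orient $C$ and think of it as two internally disjoint paths between two chosen ``antipodal'' points, or more flexibly, apply \cref{lem:pathpartition} repeatedly to carve $C$ into arcs. The rough idea: we want to split the cyclic sequence of $\Beta$-elements along $C$ into two arcs $A_1$, $A_2$ so that the subbramble hitting $A_i$ has order at least $\ell_i$, with a buffer of a few vertices in between so the arcs are vertex-disjoint. Concretely, pick any vertex $x$ on $C$ and travel along $C$; by the argument in \cref{lem:pathpartition} there is a first arc $P_1$ from $x$ whose associated subbramble has order exactly $\ell_1+1$ (and the open arc has order at most $\ell_1$), and then from the other end there is an arc $P_2$ back towards $x$ whose subbramble has order exactly $\ell_2+1$. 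If these two arcs are internally disjoint we are close; the ``$+5$'' slack in the hypothesis is there precisely to absorb the at most $4$ boundary vertices plus one extra, so that the leftover middle arc still leaves each $P_i$'s interior hitting a subbramble of order $\ge\ell_i$ while the two interiors are vertex-disjoint. Contracting each $P_i$ to a cycle $C_i$ (a path contracts to a point, but the point sits on no cycle — so instead we must keep enough of $C$: see below) gives the two cycles of outcome~\ref{itm:twisted}.

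The subtlety is that an arc of $C$ is a path, not a cycle, so to get two vertex-disjoint \emph{cycles} we cannot simply chop $C$ in two. The natural fix is to run \cref{2DRP-cor}: choose four vertices $s_1,t_1,s_2,t_2$ on $C$ in the cyclic order $s_1,s_2,t_1,t_2$, chosen so that the arc from $s_1$ to $t_1$ (one way) and the arc from $s_2$ to $t_2$ (the other way) each hit a subbramble of order at least some threshold. If $G$ contains two vertex-disjoint paths, one from $s_1$ to $t_1$ and one from $s_2$ to $t_2$, then together with the four arcs of $C$ these form two vertex-disjoint cycles $C_1$, $C_2$, each containing a prescribed arc of $C$, hence each hitting the desired subbramble — this is outcome~\ref{itm:twisted}. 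Otherwise \cref{2DRP-cor} produces a planar minor $J$ of $G$ with $T\subseteq V(J)$, and we must argue $J$ inherits a large bramble. For that, take the bramble $\Beta$ of $G$ and push it forward along the minor operations: deleting vertices outside $V(J)$ and, for each component $U$ of $G-V(J)$, absorbing $U$ into its neighbourhood $S_U$ (which has size $\le 3$ and is made into a clique). Each $\Beta$-element either already lives in $V(J)$, or meets some $U$; in the latter case replace it by its intersection with $V(J)$ together with $S_U$ — this stays connected in $J$ because $S_U$ is a clique and the element was connected. The order drops by at most... here is the crux: a hitting set for the image bramble, together with at most $|T|$ or so correction vertices, is essentially a hitting set for $\Beta$, but the honest bookkeeping of how much the order can fall is exactly the ``$-\ell_1-\ell_2+2$'' in outcome~\ref{itm:planar}, matching the choice of the four split points.

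So the skeleton is: (i) get the cycle $C$ via \cref{cyclebramble}; (ii) use the counting in \cref{lem:pathpartition} to locate split points $s_1,t_1,s_2,t_2$ on $C$ in the right cyclic order with the right subbramble orders on the two arc-pairs, using the slack $5$; (iii) dichotomize on \cref{2DRP-cor} applied to $\{s_1,t_1,s_2,t_2\}$; (iv) in the ``two disjoint paths'' case assemble the two vertex-disjoint cycles and read off the subbramble orders for~\ref{itm:twisted}; (v) in the other case, transfer $\Beta$ to the planar minor $J$ and bound the loss in order to get~\ref{itm:planar}.

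I expect step~(v) — controlling exactly how much the bramble order decreases when passing to $J$ — to be the main obstacle, because it forces the precise constant in~\ref{itm:planar} and has to be reconciled with the thresholds chosen in step~(ii); in particular one has to be careful that the ``correction'' vertices $S_U$ and the four terminals $T$ are not double-counted against the hitting set. The arc-order bookkeeping in step~(ii), including making sure the two arc interiors are genuinely vertex-disjoint after removing a bounded middle segment, is fiddly but routine given \cref{lem:pathpartition}.
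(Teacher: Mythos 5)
Your high-level skeleton matches the paper's: (i) get a hitting cycle $C$ via \cref{cyclebramble}; (ii) carve $C$ into four arcs via \cref{lem:pathpartition}; (iii) dichotomize via \cref{2DRP-cor}; (iv) the two disjoint paths give the two cycles; (v) otherwise transfer the bramble to the planar minor $J$. A smaller point first: the two-paths dichotomy must be applied not to $G$ but to the graph $H$ obtained by deleting the \emph{interiors} of the two opposite arcs $P_1,P_2$, so that in step (iv) the linkage paths automatically avoid $P_1,P_2$ and the two cycles really are vertex-disjoint; your sketch applies it to $G$, where a linkage path could run through the interior of $P_2$, say, and the two cycles would intersect.

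The main gap is step (v). Your proposed transfer --- replace each $B\in\Beta$ meeting a component $U$ of $H-V(J)$ by $(B\cap V(J))\cup S_U$ --- is not how the paper proceeds and does not obviously yield a bramble of the right order: if $B$ meets several components you would have to add several $S_U$'s, the resulting sets become heavily correlated (many share a common $S_U$), and the hitting number can collapse. The paper uses no padding at all. It restricts to the subbramble $\Beta_{3,4}$ of elements that avoid the deleted interiors (so they live entirely in $H$), whose order is exactly $\ell-\ell_1-\ell_2+2$ by the bookkeeping in \cref{lem:pathpartition}, and proves that $B\cap V(J)\neq\emptyset$ for \emph{every} $B\in\Beta_{3,4}$; then $\Beta^*=\{B\cap V(J):B\in\Beta_{3,4}\}$ is the desired bramble, with connectedness of each member checked by contracting the pieces of $B$ inside each component $U$ onto the clique $S_U$. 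That nonemptiness step is where the slack $\ell\ge\ell_1+\ell_2+5$ is actually spent: the arc thresholds are tuned so that the subbrambles $\Beta_3,\Beta_4$ of elements meeting the two surviving arcs $P_3,P_4$ each have order at least $4$, which rules out a component $U$ with $\abs{S_U}\le 3$ swallowing some $B$ (else $S_U$ would be a hitting set for all of $\Beta_3$ or all of $\Beta_4$). Your heuristic (``the $+5$ absorbs at most $4$ boundary vertices'') is not the real reason; without the order-$\ge 4$ brambles, the nonemptiness of $B\cap V(J)$ --- which is the heart of the lemma --- has no proof.
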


\begin{proof}
    Let~$G$ be a graph.
    Let~$\Beta$ be a bramble of~$G$ of order~${\ell \geq \ell_1 + \ell_2 + 5}$, and let~${t := \ell-\ell_1-\ell_2}$. 
    We apply \cref{cyclebramble} to obtain a cycle~$C$ which is a hitting set for~$\Beta$. 

    Let~${f = vw}$ be an edge of~$C$.
    We obtain an edge-partition of~$C$ into four subpaths~$P_1$, $P_3$, $P_2$, $P_4$, 
    in this cyclic order with~${V(P_1) \cap V(P_4) = \{v\}}$ and~${w \in V(P_4)}$, by 
    applying~\cref{lem:pathpartition} several times as follows.

    \begin{enumerate}[label=(\alph*)]
        \item We first apply it to~${C - f}$ and~$\Beta$ to obtain paths~$P_1$ and~${Q_1}$ such that the subbramble~$\Beta_1$ consisting of all elements of~$\Beta$ intersecting~$V(P_1)$ has order exactly~$\ell_1$, 
        the subbramble~$\Beta_1'$ consisting of all elements of~$\Beta$ intersecting~$V(P_1)-V(Q_1)$ has order at most~${\ell_1-1}$,
        and~${\Beta_{3,2,4} := \Beta-\Beta_1'}$ has order at least~${\ell-\ell_1+1}$. 
        \item Secondly, we apply it to~$C[V(Q_1) \cup \{v\}]$ and~$\Beta_{3,2,4}$ to obtain~$Q_{3,2}$ and~$P_4$ such that the subbramble~$\Beta_{3,2}$ consisting of all elements of~$\Beta_{3,2,4}$ intersecting~$V(Q_{3,2})$ has order exactly~${\ell_2 + 3}$, the subbramble~$\Beta'_{3,2}$ consisting of all elements of~$\Beta_{3,2,4}$ intersecting~${V(Q_{3,2}) - V(P_4)}$ has order at most~${\ell_2 + 2}$, and~$\Beta_4 := \Beta_{3,2,4} - \Beta'_{3,2}$ has order at least~${\ell-\ell_1-\ell_2-1 \geq 4}$. 
        \item Lastly, we apply it to~${Q_{3,2}}$ and~$\Beta_{3,2}$ to obtain~$P_2$ and~$P_3$ such that the subbramble~$\Beta_2$ consisting of all elements of~$\Beta_{3,2}$ intersecting~$V(P_2)$ has order exactly~$\ell_2$, the subbramble~$\Beta_2'$ consisting of all elements of~$\Beta_{3,2}$ intersecting~$V(P_2) - V(P_3)$ has order at most~$\ell_2 - 1$, and ${\Beta_3 := \Beta_{3,2} - \Beta_2'}$ has order at least~$4$. 
    \end{enumerate}

    For each~${i \in \{1,2\}}$ let~$s_i$ and~$t_i$ be the endvertices of~$P_i$ labelled such that~$t_i$ is an endvertex of~$P_{i+2}$. 
    Let~$H$ be the induced subgraph of~$G$ obtained by deleting all internal vertices of both~$P_1$ and~$P_2$. 
    If~$H$ has two vertex-disjoint paths~$R_1$,~$R_2$ linking~$s_1$ to~$t_1$ and~$s_2$ to~$t_2$ respectively, 
    then property~\ref{itm:twisted} is witnessed by the two vertex-disjoint cycles~${P_1 \cup R_1}$ and~${P_2 \cup R_2}$ with~$\Beta_1$ and~$\Beta_2$, respectively. 
    
    Thus we may assume that~$H$ does not have two vertex-disjoint paths linking~$s_1$ to~$t_1$ and~$s_2$ to~$t_2$, respectively. 
    By \cref{2DRP-cor} applied to~$H$, there is a planar minor~$J$ of~$H$ satisfying \ref{s1}, \ref{s2}, \ref{s3}, \ref{s4} of \cref{2DRP}. 
    
    Let~${\Beta_{3,4} := \Beta_{3,2,4} - \Beta_2'}$ and~${\Beta^* := \{ B \cap V(J) \mid B \in \Beta_{3,4} \}}$. 
    Observe that since $\Beta_{3,2,4}$ has order at least~${\ell-\ell_1+1}$ and~$\Beta_2'$ has a hitting set of size at most~$\ell_2-1$, the order of~$\Beta_{3,4}$ is at least~$(\ell-\ell_1+1)-(\ell_2-1)=t+2$. 

    \begin{claim}
        $\Beta^*$ is a bramble in~$J$.
    \end{claim}
    
    \begin{subproof}
        First, we show that~${B \cap V(J) \neq \emptyset}$ for all~${B \in \Beta_{3,4}}$. 
        Suppose not. 
        Since ${G[B] = H[B]}$ is connected, there is a component~$U$ of~${H-V(J)}$ that contains~$B$. 
        By \ref{s2}, ${S_U := N_H(V(U))}$ is a clique in~$J$ with at most three vertices. 
        There is some~${i \in \{3,4\}}$ such that~${\abs{V(P_i) \cap S_U} \leq 1}$. 
        By \ref{s1}, both ends of~$P_i$ are in~$J$ and therefore~${P_i \cap V(U) = \emptyset}$. 
        We claim that~$S_U$ is a hitting set for~$\Beta_{i}$. 
        Suppose not. 
        Then there is some~${B' \in \Beta_{i}}$ such that~${B' \cap S_U = \emptyset}$. 
        Since ${B' \cap V(P_i) \neq \emptyset}$, we have~${B' \cap V(U) = \emptyset}$. 
        Since both~$B$ and~$B'$ are in~$\Beta_{3,4}$ and ${B \subseteq V(U)}$, $H$ has an edge joining a vertex of~$B$ to a vertex of~$B'$, contradicting the fact that~$B'$ is disjoint from~$S_U$. 
        Thus, we deduce that~$S_U$ is a hitting set for~$\Beta_{i}$. 
        This is a contradiction because~$\Beta_{i}$ has order at least~$4$. 
    
        We show for all~${B_1,B_2 \in \Beta_{3,4}}$ that both~$J[B_1 \cap V(J)]$ and ${J[(B_1 \cup B_2) \cap V(J)]}$ are connected. 
        Let~${B_1,B_2 \in \Beta_{3,4}}$, not necessarily distinct, and let~${B := B_1 \cup B_2}$. 
        Let~${\mathcal{U} := \{ U \mid \text{$U$ is a component of~${H-V(J)}$ with~${B \cap V(U) \neq \emptyset}$} \}}$. 
        For each \linebreak component~${U \in \mathcal U}$, let~${X_U := E(H[B \cap V(U)]) \cup \{e_U\}}$ where~$e_U$ is an edge of ${H[B]}$ joining~$S_U$ with a vertex in~${B \cap V(U)}$. 
        Since~$S_U$ is a clique in~$J$, we have that~${H[B] / \bigcup_{U \in \mathcal U} X_U}$ is a connected spanning subgraph of~${J[B\cap V(J)]}$. 
        Thus, ${J[B \cap V(J)]}$ is connected. 
    \end{subproof}

    Since any hitting set for~$\Beta^*$ is also a hitting set of~$\Beta_{3,4}$, 
    the order of~$\Beta^*$ is at least~$t+2$. 
\end{proof}

Now it is straightforward to prove \cref{twistedprism1,twistedprism2}.

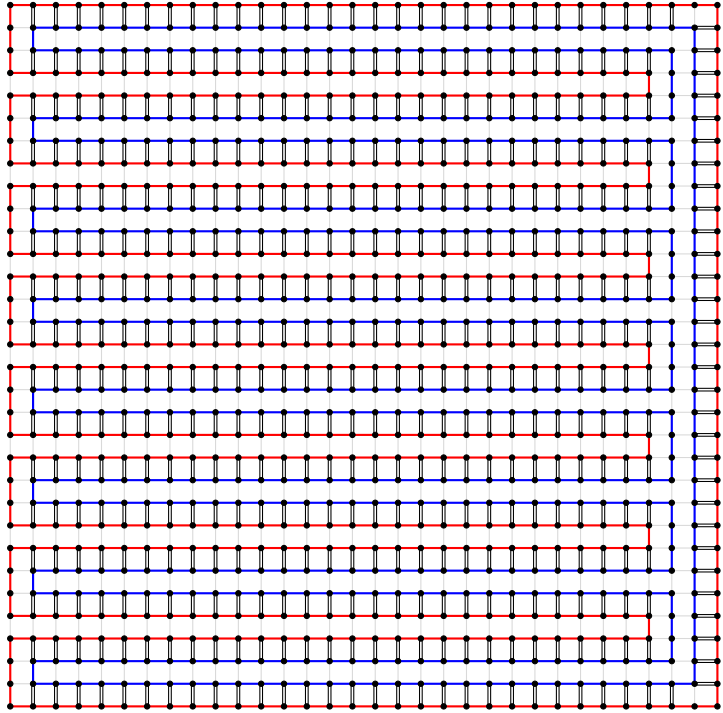
\begin{figure}
    \centering
    \begin{tikzpicture}[scale=.3]
        \tikzstyle{vv}=[circle, draw, solid, fill=black, inner sep=0pt, minimum width=2pt]
        \foreach \x in {0,1,...,31}{
            \draw [color=gray!30] (\x,0)--(\x,31);
            \draw [color=gray!30] (0,\x)--(31,\x);
        }
        \foreach \y in {0,4,8,...,28}{
            \draw [color=red,thick] (28,\y)--(0,\y)--(0,\y+3)--(28,\y+3);
            \draw [color=blue,thick] (29,\y+1)--(1,\y+1)--(1,\y+2)--(29,\y+2);
            \foreach \x in {1,2,...,28} {
                \draw [double] (\x,\y)--(\x,\y+1);
                \draw [double] (\x,\y+2)--(\x,\y+3);
            }
        }
        \foreach \y in {0,4,8,...,24}{
            \draw [color=red,thick] (28,\y+3)--(28,\y+4);
            \draw [color=blue,thick] (29,\y+2)--(29,\y+5);
        }
        \foreach \y in {1,2,...,30}{
            \draw [double](30,\y)--(31,\y);
        }
        \draw [double](29,0)--(29,1);
        \draw [double] (29,30)--(29,31);
        \draw[color=red,thick] (28,0)--(31,0)--(31,31)--(28,31);
        \draw[color=blue,thick] (29,1)--(30,1)--(30,30)--(29,30);
        \foreach \x in {0,1,...,31}
            \foreach \y in {0,1,...,31}
                \node at (\x,\y) [vv] {};
    \end{tikzpicture}
    \caption{A ${4r \times 4r}$ grid contains a ${(8r^2-4r)}$-prism as a minor. Here, ${r = 8}$.}
    \label{fig:twistedprism2}
\end{figure}

\twistedprismone*

\begin{proof}
    Let~${r := \lceil \frac{1+\sqrt{2\ell+1}}{4} \rceil}$. 
    Suppose that a graph~$G$ has treewidth at least ${2\ell + 18r - 7}$ and has no twisted $\ell$-prism as a minor. 
    Let~$\Beta$ be a maximum order bramble of~$G$. 
    By \cref{duality}, the order of~$\Beta$ is at least~${2\ell + 18r - 6}$. 
    By \cref{menger}, we may assume that~$G$ has no two vertex-disjoint cycles such that each of them intersects every member of some subbramble of order at least~$\ell$ of~$\Beta$. 
    By \cref{twisted-or-planar}, $G$ has a planar minor~$J$ 
    having a bramble of order at least~${18r-4}$. 
    By \cref{duality}, the treewidth of~$J$ is at least~${18r-5 = \frac{9}{2}\cdot 4r-5}$. 
    By \cref{planartw}, $J$ has a ${4r \times 4r}$ grid as a minor. 
    Observe from \cref{fig:twistedprism2} that a ${4r \times 4r}$ grid contains a ${(8r^2-4r)}$-prism 
    and~${8r^2-4r = \frac{1}{2}(4r-1)^2 - \frac{1}{2} \geq \ell}$. 
    It follows that~$J$ has a $\ell$-prism as a minor, contradicting our assumption. 
\end{proof}

\twistedprismtwo*

\begin{proof}
    Suppose that a graph~$G$ has treewidth at least~${2 \ell + 11}$ and has no twisted $\ell$-prism as a minor. 
    Let~$\Beta$ be a maximum order bramble of~$G$. 
    By \cref{duality}, the order of~$\Beta$ is at least~${2 \ell + 12}$. 
    By \cref{menger}, we may assume that~$G$ has no two vertex-disjoint cycles such that each of them intersects every member of some subbramble of order at least~$\ell$ of~$\Beta$. 
    By \cref{twisted-or-planar}, $G$ has a planar minor~$J$ having a bramble of order at least~$14$.
    By \cref{duality}, the treewidth of~$J$ is at least~$13$. 
    By \cref{planartw}, $J$ has a ${4 \times 4}$ grid as a minor. 
\end{proof}

Now let us prove \cref{no4by4s}.
To do so, we prove the following lemma, which, combined with \cref{twistedprism2}, implies the theorem immediately.

\begin{lemma}\label{75prism}
    Every twisted $75$-prism contains a ${4 \times 4}$ grid minor.
\end{lemma}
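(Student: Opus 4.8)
The plan is to exhibit the $4\times4$ grid directly as a minor, using $16$ carefully chosen matching edges as the "cells" of the grid and short arcs of the two $75$-cycles as the glue. Write the two cycles as $C_1$ and $C_2$, and let the matching pair the $i$-th vertex of $C_1$ (in cyclic order) with the vertex in position $\pi(i)$ of $C_2$, so that the twist is recorded by a cyclic permutation $\pi$ of $\{1,\dots,75\}$. I would say that $16$ matching edges $e_{ab}$ ($1\le a,b\le 4$) form a \emph{grid pattern} if, along $C_1$, they occur cyclically as $e_{11},e_{12},e_{13},e_{14},e_{21},\dots,e_{44}$ (four consecutive "row blocks"), while along $C_2$ they occur cyclically as $e_{11},e_{21},e_{31},e_{41},e_{12},\dots,e_{44}$ (four consecutive "column blocks").

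Given a grid pattern, the grid minor is built as follows: the branch set of the grid vertex $(a,b)$ consists of the matching edge $e_{ab}$, together with the $C_1$-arc from the $C_1$-end of $e_{ab}$ to that of $e_{a,b+1}$ when $b<4$, and the $C_2$-arc from the $C_2$-end of $e_{ab}$ to that of $e_{a+1,b}$ when $a<4$. Each branch set is connected, being a $C_1$-arc and a $C_2$-arc glued along a matching edge; the branch sets are pairwise disjoint because the row-block structure on $C_1$ means the $C_1$-arcs used are exactly the twelve arcs internal to the four row blocks, and symmetrically on $C_2$, so no arc between two consecutive blocks is ever used and shared endpoints lie in only one branch set; and every horizontal grid edge $(a,b)(a,b+1)$ is witnessed by the shared $C_1$-endpoint, every vertical grid edge $(a,b)(a+1,b)$ by the shared $C_2$-endpoint. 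Reversing $C_1$ or $C_2$, or interchanging the roles of the two cycles, yields a handful of symmetric variants of the grid pattern that serve equally well.

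It then remains to show that every twisted $75$-prism contains a grid pattern up to these symmetries; equivalently, that every cyclic permutation of length $75$ contains one of the finitely many "transpose-type" patterns of length $16$ as a cyclic subpattern. I expect this combinatorial step to be the main obstacle, and the crucial point is that it is \emph{not} a bare application of Erd\H{o}s--Szekeres: a long monotone subsequence only produces an (untwisted) prism minor, exactly as in the Birmel\'e--Bondy--Reed argument recalled in the introduction, whereas here one genuinely needs the non-monotone transpose pattern. The natural route is an iterated Dilworth/Erd\H{o}s--Szekeres argument: decompose the permutation into increasing runs, and exploit how those runs interleave to locate four of them that "round-robin" in the manner the transpose pattern demands, with the concrete value $75$ dropping out of bounding, stage by stage, how long a chunk one must start from to force the next piece. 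Verifying that $75$ is in fact large enough, and dealing carefully with the wrap-around subtleties of cyclic (as opposed to linear) orders on $C_1$ and $C_2$, is where the real work lies; the grid-minor construction above is then routine bookkeeping.
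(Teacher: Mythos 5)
Your proposal has two genuine gaps, one of which is fatal.

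First, the combinatorial claim you reduce to — that every cyclic permutation of $\{1,\dots,75\}$ contains a ``grid pattern'' up to your listed symmetries — is \emph{false}. Take $\pi$ to be the identity (the untwisted $75$-prism is a twisted $75$-prism under the paper's definition). Then the cyclic order of any chosen matching edges along $C_2$ is identical to their order along $C_1$, so you would need the row-major order $(1,1),(1,2),\dots$ to coincide cyclically with the column-major order $(1,1),(2,1),\dots$, which fails already at the second element. Reversing $C_1$ or $C_2$, or swapping the roles of the cycles, does not fix this: you still need a row-major and a column-major order to agree, which cannot happen for a $4\times4$ pattern under any single monotone map. So the reduction you set up would have to fail, and no amount of Dilworth or Erd\H{o}s--Szekeres work can rescue it as stated.

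Second, and underlying the first gap, you argue that ``a long monotone subsequence only produces an (untwisted) prism minor'' and therefore is useless here. That is a misreading. The untwisted $\ell$-prism for $\ell\geq 12$ \emph{does} contain the $4\times4$ grid as a minor: take the outer cycle of the prism as the boundary $12$-cycle of the grid and contract the inner cycle into four consecutive arcs of length three to realize the four interior grid vertices. (This is precisely how Birmel\'e--Bondy--Reed obtained their $7262$ bound.) The paper's proof of this lemma does use Erd\H{o}s--Szekeres to extract a monotone subsequence, producing a \emph{subdivision} of a $10$-prism in $G$; the extra work is not to avoid monotone subsequences but to exploit the $65$ leftover matching edges as subdivision vertices (or, in the other case, to use a short $4$-cycle) to manufacture the $4\times4$ grid from a prism that is shorter than $12$. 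Your ``grid pattern'' construction is a perfectly sound way to build a $4\times4$ grid minor when such a pattern exists, and it is a genuinely different structure from the one the paper builds; but since a grid pattern need not exist, you would still have to handle the near-monotone permutations by some prism-style argument, at which point the distinction from the paper's route largely dissolves. As written, the proof does not go through.

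Finally, a minor point of bookkeeping: as stated, consecutive branch sets $(a,b)$ and $(a,b+1)$ both contain the $C_1$-endpoint of $e_{a,b+1}$; you need the arcs to be half-open so the sets are disjoint.
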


\begin{proof}
    Let~$G$ be a twisted $75$-prism, and let~$C_1$,~$C_2$ be two cycles of $G$ with a matching of size~$75$ between them. 
    Let~${v_1,v_2,\ldots,v_{75}}$ be the vertices of~$C_1$ in the cyclic order, let~${w_1,w_2,\ldots,w_{75}}$ be the vertices of~$C_2$ in the cyclic order, and let~${\pi \colon \{ 1, \dots, 75 \} \to \{ 1, \dots, 75 \}}$ be the permutation for which~${e_i := v_i w_{\pi(i)}}$ is an edge of~$G$ for all~${i \in \{1, \dots, 75\}}$. 

    Suppose that~$G$ has a cycle~$C$ of length~$4$. 
    Without loss of generality, we may assume that~$C$ contains~$e_1$ and~$e_{75}$. 
    Since ${\abs{\{e_2,\dots,e_{74}\}} = 8 \cdot 9 + 1}$, we can apply the Erd\H{o}s-Szekeres theorem~\cite{ES1935} to find 
    \begin{enumerate}[label=(\roman*)]
        \item\label{item:increasing} an increasing sequence of integers~${2 \leq i_1 < i_2 < \cdots < i_9 \leq 74}$ such that ${\pi(i_1) < \pi(i_2) < \cdots < \pi(i_9)}$, or
        \item\label{item:decreasing}  an increasing sequence of integers~${2 \leq i_1 < i_2 < \cdots < i_{10} \leq 74}$ such that ${\pi(i_1) > \pi(i_2) > \cdots > \pi(i_{10})}$. 
    \end{enumerate}
    Using both~$e_1$ and~$e_{75}$ in case~\ref{item:increasing} and using~$e_1$ in case~\ref{item:decreasing}, $G$ contains a planar minor consisting of two vertex disjoint cycles of length~$12$ and a matching of size~$11$ between them. 
    By allowing one of these cycles to form the outer face and contracting some edges of the other cycle, we observe that~$G$ contains the ${4 \times 4}$ grid as a minor. 

    Thus we may assume that~$G$ has no cycle of length~$4$. 
    Without loss of generality, we may assume that~${\pi(75) = 75}$. 
    Since~${74 > 8^2}$, by applying the Erd\H{o}s-Szekeres theorem~\cite{ES1935} we find
    \begin{enumerate}[label=(\roman*)]
        \item\label{item:increasing2} an increasing sequence of integers~${1 \leq i_1 < i_2 < \cdots < i_9 \leq 74}$ such that ${\pi(i_1) < \pi(i_2) < \cdots < \pi(i_9)}$, or
        \item\label{item:decreasing2} an increasing sequence of integers~${1 \leq i_1 < i_2 < \cdots < i_{9} \leq 74}$ such that ${\pi(i_1) > \pi(i_2) > \cdots > \pi(i_{9})}$. 
    \end{enumerate}
    Using~$e_{75}$ as well as~$e_{i_1}, \dots, e_{i_9}$ we obtain that~$G$ contains a subdivision of the $10$-prism~$H'$ in each case. 
    Let~$C_i'$ denote the cycle of~$H'$ corresponding to~$C_i$ in~$G$ for each~${i \in \{1,2\}}$. 
    For an edge~$e$ of~$H'$, let~${\ell(e)}$ be the length of the path in~$G$ corresponding to~$e$. 

    We claim that if~$xyzw$ is a path of length three in~$C_i'$ for some~${i \in \{1,2\}}$, and~${\ell(xy), \ell(zw) > 1}$, then~$G$ has a ${4 \times 4}$ grid as a minor. 
    By symmetry, we may assume that~${i = 1}$. 
    Let~$x'$, $y'$, $z'$, $w'$ be the vertices of~$C_2'$ that are matched to~$x$, $y$, $z$, $w$ by the edges of~$H'$. 
    Then it is easy to observe that~$H'$ contains a minor isomorphic to the ${4 \times 4}$ grid, which can be seen by contracting edges of the subpaths of~$C_2'$ from~$x'$ to~$y'$ and from~$z'$ to~$w'$. 

    Thus, if we assume for contradiction that~$G$ contains no ${4 \times 4}$ grid minor, it is straightforward to observe that at most~$4$ edges of~$C_i'$ are subdivided in~$G$ for each~${i \in \{1,2\}}$. 
    Thus~$G$ has a cycle of length~$4$, contradicting our assumption. 
\end{proof}
    
\nofourbyfours*

\begin{proof}
    Let~$G$ be a graph with treewidth at least~${2 \cdot 75 + 11}$. 
    By \cref{twistedprism2}, $G$ contains a twisted $75$-prism or a ${4 \times 4}$ grid as a minor. 
    By \cref{75prism}, if~$G$ contains a twisted $75$-prism as a minor, then~$G$ contains a ${4 \times 4}$ grid as a minor. 
\end{proof}

\section{Excluding Disjoint Unions of Cycles}\label{sec:fewcycles}

In this section, we prove \cref{fewcycles}. 
Our approach to the~${r=2}$ case is similar to that used in the proof of \cref{twistedprism1}. 
Let us state it as a separate proposition and then prove it. 

\begin{proposition}
    \label{fewcycles2}
    Let~$H$ be the disjoint union of two cycles. 
    Every graph without an~$H$ minor has treewidth less than~${\abs{V(H)} + \frac{9}{2} \lceil \sqrt{4+\abs{V(H)}} \rceil + 2}$. 
\end{proposition}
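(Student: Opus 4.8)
The plan is to mimic the argument used for \cref{twistedprism1}, but with the two target cycles required to have prescribed lengths $\ell_1, \ell_2$ (where $\ell_1 + \ell_2 = \abs{V(H)}$ and $C_i$ has length $\ell_i$), rather than equal lengths. Suppose $G$ has treewidth at least $\abs{V(H)} + \frac{9}{2}\lceil\sqrt{4+\abs{V(H)}}\rceil + 1$; by \cref{duality}, $G$ has a bramble $\Beta$ whose order exceeds this by one. Write $\abs{V(H)} = \ell_1 + \ell_2$ where $\ell_i \geq 3$. The first step is to apply \cref{twisted-or-planar} with these values of $\ell_1, \ell_2$: either $G$ has two vertex-disjoint cycles $C_1, C_2$ where the subbramble hitting $C_i$ has order at least $\ell_i$, or $G$ has a planar minor $J$ with a bramble of order at least $\ord(\Beta) - \ell_1 - \ell_2 + 2$, which will be at least $\frac{9}{2}\lceil\sqrt{4+\abs{V(H)}}\rceil + 2 = \frac{9}{2}g - 3$ for $g := \lceil\sqrt{4+\abs{V(H)}}\rceil$.

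In the second case, by \cref{duality} the treewidth of $J$ is at least $\frac{9}{2}g - 4 \geq \frac{9}{2}g - 5$, so by \cref{planartw} $J$ has a $g \times g$ grid as a minor. Since $g^2 \geq 4 + \abs{V(H)} = 4 + \ell_1 + \ell_2$, a $g \times g$ grid contains two vertex-disjoint cycles, one of length $\ell_1$ and one of length $\ell_2$ — indeed one can carve out two nested rectangular cycles of the prescribed even lengths if the $\ell_i$ are even, and handle parity by using an L-shaped or staircase cycle; the point is that a $g\times g$ grid contains vertex-disjoint cycles of every pair of lengths summing to at most roughly $g^2$, which is why the $+4$ slack in $g$ suffices. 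This gives an $H$ minor directly. So it remains to handle the first case.

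In the first case, we have cycles $C_1, C_2$ and, for each $i$, a subbramble $\Beta_i$ of order at least $\ell_i$ every element of which meets $C_i$. The cycles themselves need not have length exactly $\ell_i$, so the idea is to use the bramble to reroute: apply \cref{menger} (in the form giving $\ell_i$ disjoint paths, or more carefully an argument along the lines of its proof) to show that $C_1$ can be "grown" or adjusted using the high-order subbramble $\Beta_1$ so that $G$ contains a cycle of length exactly $\ell_1$ disjoint from a similar cycle of length exactly $\ell_2$ built from $\Beta_2$ — the standard fact being that a bramble of order $k$ guarantees a cycle through any chosen structure that can be made to have at least $k$ vertices, hence at least $\ell_i$, and one then contracts down to exactly $\ell_i$. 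Concretely: $\Beta_i$ having order $\geq \ell_i$, together with $C_i$ meeting every element of $\Beta_i$, lets us find within $G$ (using that the union of $C_i$ with a connected subgraph spanning the $\Beta_i$-elements is connected and 2-connected enough) a cycle on at least $\ell_i$ vertices, which we contract to length exactly $\ell_i$; the two can be kept disjoint because $\Beta_1, \Beta_2$ live on vertex-disjoint cycles and we only ever use vertices "close to" $C_i$.

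The main obstacle I anticipate is exactly this last rerouting step: \cref{twisted-or-planar} delivers cycles with the right bramble-order but not the right lengths, and turning "a subbramble of order $\geq \ell_i$ meets $C_i$" into "there is a cycle of length exactly $\ell_i$ near $C_i$, disjoint from its counterpart" requires a Menger/bramble argument that keeps the two sides disjoint. I expect the cleanest route is to prove a small lemma: if $C$ is a cycle and $\mathcal B'$ a bramble of order $\geq k$ each element meeting $C$, then $G$ contains a cycle on at least $k$ vertices using only $V(C)$ together with vertices in $\bigcup \mathcal B'$ that are connected to $C$; applying this to $C_1$ with $\Beta_1$ and to $C_2$ with $\Beta_2$, and observing that the relevant vertex sets are separated, then contracting, yields the disjoint $(\ell_1,\ell_2)$-cycle pair, i.e.\ an $H$ minor. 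The arithmetic — $\ord(\Beta) \geq \ell_1+\ell_2 + \frac{9}{2}g - 4$ with $g = \lceil\sqrt{4+\abs{V(H)}}\rceil$, feeding the hypothesis $\ell \geq \ell_1+\ell_2+5$ of \cref{twisted-or-planar} and the threshold of \cref{planartw} — is routine and I would defer it to the end.
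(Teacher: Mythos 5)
Your high-level strategy --- invoke \cref{twisted-or-planar} with $\ell_1 + \ell_2 = \abs{V(H)}$, handle the two alternatives separately, and use \cref{planartw} on the planar minor --- is exactly the paper's. However, the step you flag as ``the main obstacle'' is not an obstacle at all, while the step you treat as routine (carving cycles of exact lengths $\ell_1,\ell_2$ out of the grid) is where the real gap lies.

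For the first alternative, no rerouting, no Menger argument, and no ``growing'' of cycles is needed. If every element of a subbramble of order at least $\ell_i$ meets $V(C_i)$, then $V(C_i)$ is a hitting set for that subbramble, so $\abs{V(C_i)} \geq \ell_i$ immediately from the definition of order. The cycles $C_1$, $C_2$ already have the required lengths and are already vertex-disjoint; you simply contract each down to length $\ell_i$. The paper dispatches this alternative with the one-line observation that ``if a cycle intersects every element of some bramble of order at least $m$, its length is at least $m$.''

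For the second alternative, your plan fails because the $g \times g$ grid is bipartite: every cycle in it --- including the ``L-shaped or staircase'' ones you propose for fixing parity --- has even length, so a cycle of odd length $\ell_i$ cannot be found as a subgraph. The fix is the same contraction trick you correctly apply in the other case: find two vertex-disjoint cycles of lengths \emph{exceeding} $\ell_1$ and $\ell_2$, then contract. The paper does this by taking $g := 2 + \lceil\sqrt{4+\abs{V(H)}}\rceil$ (note the extra $+2$ over your choice), splitting the grid into a band of the first $\lceil\ell_1/g\rceil+1$ rows and a band of the next $\lceil\ell_2/g\rceil+1$ rows, and using the fact that any $a \times b$ subgrid with $a,b>1$ contains a cycle on at least $ab-1$ vertices (a Hamiltonian cycle, or one after deleting a corner). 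Your intermediate equality ``$\frac{9}{2}\lceil\sqrt{4+\abs{V(H)}}\rceil + 2 = \frac{9}{2}g - 3$'' is also incorrect for your own definition of $g$; the discrepancy disappears once $g$ is set as in the paper.
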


\begin{proof}
    We let~$C_1$ and~$C_2$ be the two cycles whose disjoint union is~$H$. 
    We set~${\ell_i := \abs{E(C_i)}}$ for each $i \in \{1,2\}$ and let~${\ell := \ell_1 + \ell_2}$. 
    Let~$G$ be a graph whose treewidth is at least~${\ell + \frac{9}{2} \lceil \sqrt{4+\ell} \rceil + 2}$. 
    Let~$\Beta$ be a maximum order bramble of~$G$. 
    By \cref{duality}, the order of~$\Beta$ is at least~${\ell + \frac{9}{2} \lceil \sqrt{4+\ell} \rceil + 3}$. 
    Note that if a cycle intersects every element of some bramble of order at least~$m$, its length is at least~$m$.
    Since~$G$ has no~$H$ minor, by \cref{twisted-or-planar}, $G$ has a planar minor~$J$ having a bramble of order at least~${\frac{9}{2} \lceil \sqrt{4+\ell} \rceil + 5}$. 
    By \cref{duality}, the treewidth of~$J$ is at least~${\frac{9}{2} \lceil \sqrt{4+\ell} \rceil + 4}$. 
    Let~${g := 2 + \lceil \sqrt{4+\ell} \rceil}$. 
    By \cref{planartw}, $J$ has a ${g \times g}$ grid as a minor. 
    Since~${(g-2)^2 \geq \ell_1 + \ell_2 + 4}$, we deduce that 
    \[ 
        g \geq \left\lceil \frac{\ell_1}{g} \right\rceil + 1 + \left\lceil \frac{\ell_2}{g} \right\rceil + 1.
    \] 

    Note that if~${a,b > 1}$ are integers, then the~${a \times b}$ grid has a Hamiltonian cycle or its subgraph obtained by deleting one corner vertex has a Hamiltonian cycle. 
    Thus, the ${a \times b}$ grid has a cycle of length at least~${ab - 1}$. 

    The first~${\lceil \frac{\ell_1}{g} \rceil + 1}$ rows of the ${g \times g}$ grid has a cycle of length exceeding~$\ell_1$. 
    The next~${\lceil \frac{\ell_2}{g} \rceil + 1}$ rows of this grid contains a cycle of length exceeding~$\ell_2$. 
    So~$G$ has~$H$ as a minor, which is the desired contradiction. 
\end{proof}

To handle the~${r \geq 3}$ case of \cref{fewcycles},
we need the following famous theorem of Erd\H{o}s and P\'{o}sa. 

\begin{theorem}[Erd\H{o}s and P\'osa~\cite{EP1965}]\label{thm:ErdosPosa}
    There is a constant~$c^*$ such that, for every positive integer~$r$, every graph contains either a set of vertices of size at most~${c^* r \log r}$ which hits every cycle or a packing of~$r$ vertex-disjoint cycles. 
\end{theorem}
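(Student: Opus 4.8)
The plan is to prove, by induction on $r$, the stronger statement that every graph with no $r$ vertex-disjoint cycles has a \emph{cycle transversal} (a set of vertices meeting every cycle) of size at most $f(r)$, with $f(r)=O(r\log r)$; the theorem then follows since such a transversal either exists or does not. First I would carry out the standard reduction to (multi)graphs of minimum degree at least $3$: pass from $G$ to its $3$-core $G'$ by iteratively deleting vertices of degree at most $1$ and suppressing vertices of degree exactly $2$ (a suppression may create a loop or a pair of parallel edges, which count as cycles of length $1$ or $2$). Deleting a vertex of degree at most $1$ lies on no cycle, and suppressing a degree-$2$ vertex merely reroutes cycles through the new edge, so $G'$ has $r$ vertex-disjoint cycles precisely when $G$ does, and a cycle transversal of $G'$ of size $s$ lifts to one of $G$ of the same size (if the transversal ``uses'' an edge arising from a suppression, replace it by an internal vertex of the corresponding path of $G$). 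Hence it suffices to bound $f(r)$ for $G$ with $\delta(G)\ge 3$. The base case $r=1$ is trivial ($f(1)=0$, since a graph with no cycle is a forest), and one may use the theorem of Lov\'asz that a graph with no two vertex-disjoint cycles has a cycle transversal of size at most $3$ as a convenient strengthening, giving $f(2)\le 3$.

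For the inductive step the main quantitative input is the classical fact that a (multi)graph with minimum degree at least $3$ on $n$ vertices has a cycle of length at most $2\log_2 n+1$: growing a breadth-first ball, if no such short cycle existed the ball would be a tree in which the root has at least $3$ children and every other vertex at least $2$, so its size would more than double each step and exceed $n$ within $\log_2 n$ steps. Given $G$ with $\delta(G)\ge 3$ and no $r$ vertex-disjoint cycles, I would split on the girth $g$ of $G$. If $g$ is small, say $g\le c\log r$ for a constant $c$ to be fixed, pick a shortest cycle $C$; then $G-V(C)$ (or, better, its $3$-core) has no $r-1$ vertex-disjoint cycles, so by induction it has a cycle transversal of size at most $f(r-1)$, and adding $V(C)$ yields one of $G$ of size at most $f(r-1)+c\log r=:f(r)$, which telescopes to $O(r\log r)$. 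The remaining task is to rule out the case $g>c\log r$, i.e.\ to show that a graph with $\delta\ge 3$ and girth larger than $c\log r$ already contains $r$ vertex-disjoint cycles: one repeatedly extracts a shortest cycle and passes to the $3$-core of what remains, observing that each removed cycle is disjoint from all previous ones, so surviving $r$ rounds finishes the proof.

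The hard part is precisely this last step: controlling how much the $3$-core shrinks, and how much the girth drops, when a short cycle is removed. In general the $3$-core can collapse catastrophically — witness $K_3+\overline{K_{n-3}}$, which has minimum degree $3$ and arbitrarily many vertices yet no two vertex-disjoint cycles — so the high-girth hypothesis is genuinely needed. When the girth is large, no vertex outside a shortest cycle $C$ can have two neighbours on $C$ (else there is a cycle shorter than $g$), so deleting $C$ and re-taking the $3$-core triggers no long cascade, only suppressions of degree-$3$ vertices adjacent to $C$; and a long chain of such suppressed vertices would itself force a short cycle via its attachment pattern to $C$, so these chains are short. Consequently the girth drops by only a bounded amount and the vertex count by only $O(\abs{V(C)})$ per round, so the process stays in the minimum-degree-$3$, positive-girth regime for $r$ rounds and yields $r$ vertex-disjoint cycles. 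A crude version of this bookkeeping already proves the Erd\H{o}s--P\'osa theorem with $f(r)=O(r^2)$; the extra care required to keep every extracted cycle of length $O(\log r)$ throughout, rather than $O(r)$, is what produces the optimal $f(r)=O(r\log r)$ of \cite{EP1965}, matching up to the constant the lower bound implicit in the discussion preceding \cref{prop:fewcycles}.
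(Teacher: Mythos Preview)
The paper does not prove \cref{thm:ErdosPosa}; it is quoted with a citation to~\cite{EP1965} and invoked as a black box in the proof of \cref{fewcycles3ormore}. There is no proof in the paper to compare your attempt against.

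On its own merits, your plan follows the classical route and correctly isolates the main ingredients: the reduction to a multigraph of minimum degree at least~$3$, the short-cycle lemma giving a cycle of length at most~$2\log_2 n+1$, and the inductive peel-off of a short cycle. You also rightly flag the high-girth branch as the crux. The two specific assertions you make there, however, do not hold as stated. The number of vertices suppressed after deleting a shortest cycle~$C$ is bounded by~$\sum_{c\in C}(\deg(c)-2)$, not by~$O(\abs{V(C)})$, and vertices of~$C$ can have arbitrarily high degree; so the per-round vertex loss is not controlled by~$\abs{V(C)}$ alone. Likewise, suppressing a path of degree-$2$ vertices whose endpoints are already joined by a short path creates a short cycle in the resulting multigraph, so the girth need not drop by only an additive constant (indeed it can halve at each step, which is too fast for your iteration to survive~$r$ rounds starting from girth~$c\log r$). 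The standard way to close this gap, as in Simonovits's refinement or the usual textbook treatments, is to prove the separate lemma that a multigraph with~$\delta\ge 3$ and no~$r$ vertex-disjoint cycles has at most~$O(r\log r)$ vertices; once that is in hand, every shortest cycle extracted in the induction has length~$O(\log r)$ and the high-girth branch is never needed. As you yourself acknowledge, without such a refinement the plan yields only~$f(r)=O(r^2)$.
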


\begin{proposition}
    \label{fewcycles3ormore}
    There is an absolute constant~$c$ such that for every positive integer~$r$, 
    if~$H$ is the disjoint union of~$r$ cycles, 
    then every graph without an~$H$ minor has treewidth less than
    \[
        \frac{3\abs{V(H)}}{2} + c r^2 \log r.
    \]
\end{proposition}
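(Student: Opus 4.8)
The plan is to combine the Erdős–Pósa theorem (Theorem~\ref{thm:ErdosPosa}) with a contraction/deletion argument and the machinery already built for the two-cycle case. Let $H$ be the disjoint union of $r$ cycles $C_1,\dots,C_r$ with $\ell_i := \abs{E(C_i)}$, and suppose $G$ is a graph with no $H$ minor and large treewidth; we aim to derive a contradiction once $\tw(G) \geq \frac{3}{2}\abs{V(H)} + cr^2\log r$. First I would apply Theorem~\ref{thm:ErdosPosa} to $G$: either $G$ has $r$ vertex-disjoint cycles, or there is a hitting set $X$ of size at most $c^*r\log r$ meeting every cycle of $G$. In the second case $G - X$ is a forest, so $\tw(G) \leq \abs{X} + 1 \leq c^*r\log r + 1$, which is far below our lower bound (so this case cannot occur). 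Hence $G$ contains $r$ vertex-disjoint cycles $D_1,\dots,D_r$; the issue is that we have no control on their lengths, and we need each $D_i$ to be \emph{contractible} down to a cycle of length exactly $\ell_i$, which requires $\abs{E(D_i)} \geq \ell_i$, i.e.\ we need the cycles we find to be long enough.

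To get long cycles I would iterate the two-cycle extraction argument. The key tool is Lemma~\ref{twisted-or-planar} together with the observation (used in the proof of Proposition~\ref{fewcycles2}) that a cycle hitting every element of a bramble of order $m$ has length at least $m$. Starting from a maximum-order bramble $\Beta$ of $G$ with $\ord(\Beta) \geq \frac{3}{2}\abs{V(H)} + cr^2\log r$, I would repeatedly peel off one long cycle at a time: apply Lemma~\ref{cyclebramble} and Lemma~\ref{lem:pathpartition}-style splitting to locate a cycle $D_1$ meeting every element of a subbramble of order $\ell_1$ (hence $\abs{E(D_1)} \geq \ell_1$), delete $V(D_1)$ from $G$, and argue that the remaining bramble — restricted to $G - V(D_1)$ — still has order at least $\frac{3}{2}\abs{V(H \setminus C_1)} + cr^2\log r$ minus a bounded loss, so that we can continue. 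Deleting $V(D_1)$ costs at most $\abs{V(D_1)}$ in bramble order, but $D_1$ may be long; this is exactly where the $\frac{3}{2}\abs{V(H)}$ rather than $\abs{V(H)}$ comes in. The trick (as in Leaf–Seymour's apex-forest bound) is that a cycle $D_1$ meeting a subbramble of order $\ell_1$ can be chosen so that after contracting $D_1$ to a cycle of length $\ell_1$ we only ``spend'' about $\frac{3}{2}\ell_1$ on the bramble: roughly, $\ell_1$ vertices become the contracted cycle and are genuinely removed, while the internal vertices of the contracted paths can be absorbed with only a $\frac{1}{2}$-per-vertex amortized loss by a careful accounting of how $\Beta$ meets $D_1$. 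Alternatively — and this is cleaner — one shows directly that if $\tw(G) \geq \frac{3}{2}\ell_1 + c'r\log r + \text{(bound for }H\setminus C_1)$, then one can find $D_1$ and pass to $G - V(D_1)$ with $\tw(G - V(D_1))$ still at least the required bound for the $(r-1)$-cycle disjoint union, then induct on $r$.

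I would set up the induction on $r$: the base case $r \leq 2$ is Proposition~\ref{fewcycles2} (and $r=1$ is the classical Fellows–Langston bound $f(C) = \abs{V(C)} - 2$). For the inductive step, given the bramble of order $\geq \frac{3}{2}\abs{V(H)} + cr^2\log r$, extract one long cycle $D_r$ witnessing $\ell_r$ as above, and check that $\tw(G - V(D_r)) \geq \frac{3}{2}\abs{V(H)} + cr^2\log r - \big(\tfrac{3}{2}\ell_r + O(r\log r)\big) \geq \frac{3}{2}(\abs{V(H)} - \ell_r) + c(r-1)^2\log(r-1)$, where the $O(r\log r)$ slack per step sums to $O(r^2\log r)$ over all $r$ steps — this is where the $cr^2\log r$ term is consumed. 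By induction $G - V(D_r)$ contains $C_1 \cup \cdots \cup C_{r-1}$ as a minor on a vertex set disjoint from $V(D_r)$, and contracting $D_r$ down to a cycle of length $\ell_r$ gives the full $H$ minor, a contradiction. The main obstacle is making the bramble-order bookkeeping across the $r$ peeling steps rigorous — specifically, proving that one can always find the next long cycle inside a subbramble whose order is only an $O(r\log r)$ additive amount below what is needed, so that the accumulated loss is $O(r^2\log r)$ and not worse; this requires combining the ``planar minor or two disjoint cycles'' dichotomy of Lemma~\ref{twisted-or-planar} with a charging argument controlling the interaction between successive deleted cycles, and is the technical heart of the proof.
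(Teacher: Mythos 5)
Your proposal correctly identifies the overall strategy (induction on $r$, peeling one cycle per step, with the hitting-cycle / bramble machinery and Erd\H{o}s--P\'osa as the main tools), and you correctly foresee that the per-step accounting should cost about $\tfrac{3}{2}\ell_i + O(r\log r)$. But the step you yourself flag as ``the technical heart'' --- how to actually locate the next cycle so that deleting its vertex set damages the bramble by only $\tfrac{3}{2}\ell_i + O(r\log r)$, rather than by the cycle's potentially unbounded length --- is left unresolved, and the route you gesture at (Leaf--Seymour-style ``$\tfrac12$-per-vertex amortized'' charging, together with \cref{twisted-or-planar}) is not what the paper does and does not obviously work. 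The fundamental problem is that a cycle hitting a subbramble of order $\ell_1$ need only have \emph{length} at least $\ell_1$; it may have far more vertices, and deleting them all can collapse the residual bramble.

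The paper's actual mechanism, which is absent from your sketch, is a dichotomy entirely different from \cref{twisted-or-planar} (which the paper uses only for the $r=2$ case). After splitting the hitting cycle $C$ into a ``long'' middle path $P$ (hitting a subbramble of order exactly $c_1-2$, hence $\abs{V(P)}\geq c_1-2$) flanked by two anchor paths $P_1,P_2$ (each hitting a subbramble of order $\approx 2c^*r\log r$), \cref{menger} yields $\abs{S}\approx 2c^*r\log r$ vertex-disjoint $P_1$--$P_2$ paths in $G-V(P)$. Now: \emph{either} some cross-path $Q\in S$ has at most $\tfrac12 c_1-2$ internal vertices, in which case $Q\cup P\cup P_1\cup P_2$ contains a cycle $O$ of length $\geq c_1$, and --- crucially --- the residual subbramble $\Beta_4$ avoids $P\cup P_1\cup P_2$ entirely, so deleting $V(O)$ hurts it by at most $\abs{V(Q)}\leq\tfrac12 c_1-2$, giving the $\tfrac32 c_1+O(r\log r)$ loss and the inductive step; \emph{or else} every cross-path is long, so \emph{every} cycle in the ladder $G'=P_1\cup P_2\cup\bigcup S$ has length $\geq c_1\geq c_i$ for all $i$, and a short counting argument (using that $G'$ has max degree $3$ and $\abs{E(G')}-\abs{V(G')}=\abs{S}-2$) shows any feedback vertex set of $G'$ exceeds $c^*r\log r$, so \cref{thm:ErdosPosa} applied \emph{inside $G'$} (not to $G$ itself, where the cycles are uncontrolled, as you noted in your first paragraph) immediately yields $r$ disjoint cycles all of length $\geq c_1$, finishing without induction at all. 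This branching --- a cheap cycle to peel, or a girth-$c_1$ auxiliary subgraph to hand to Erd\H{o}s--P\'osa --- is the missing idea. Without it, the bookkeeping you worry about cannot be closed, since there is no a priori bound on the number of vertices the peeled cycle removes from the residual bramble.
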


\begin{proof}
    It is enough to show that there is a constant~$c$ such that if~$H$ is the disjoint union of~$r$ cycles, every graph~$G$ without an~$H$ minor has treewidth less than 
    \[ 
        \frac{3\abs{V(H)}}{2} + c \sum_{k=1}^r k \log k.
    \]
    Let~$c^*$ be a positive integer that is the constant in \cref{thm:ErdosPosa} and let~${c := 4c^*}$. 
    We proceed by induction on~$r$. 
    
    Let~$H$ be a disjoint union of~$r$ cycles of lengths~${c_1,c_2,\dots, c_r}$, with~${c_i \geq c_{i+1}}$ for all~${i \in \{1,2,\ldots,r-1\}}$. 
    If~${r = 1}$, the result follows from \cref{cyclebramble}. 
    Thus we may assume that~${r > 1}$. 
    Let~$G$ be a graph whose treewidth is at least ${\frac{3\abs{V(H)}}{2} + c \sum_{k=1}^r k \log k}$. 
    Let~$\Beta$ be a maximum order bramble of~$G$. 
    By \cref{duality}, the order of~$\Beta$ is~${\tw(G) + 1}$. 
    
    Let~$C$ by a cycle in~$G$ that is a hitting set for~$\Beta$, as guaranteed by \cref{cyclebramble}. 

    Let~$f \in E(C)$. 
    We apply \cref{lem:pathpartition} repeatedly, we obtain vertex-disjoint consecutive subpaths~$P_1$,~$P$,~$P_2$ of~$C$ in the cyclic order in the following way. 

    \begin{enumerate}[label=(\alph*)]
        \item We apply it to~$C-f$ and~$\Beta$ to obtain a path~$P$ such that the subbramble~$\Beta_1$ of elements of~$\Beta$ intersecting~$P$ has order exactly~${c_1 - 2}$. 
        \item We apply it to~$C - V(P)$ and~$\Beta - \Beta_1$ to obtain a path~$P_1$ adjacent to~$P$ such that the subbramble~$\Beta_2$ of elements of~$\Beta-\Beta_1$ intersecting~$P_1$ has order exactly~${2 \lfloor c^* r \log r \rfloor + 2}$. 
        \item We apply it to~$C-V(P \cup P_1)$ and~$(\Beta - \Beta_1) - \Beta_2$ to obtain a path~$P_2$ adjacent to~$P$ such that the subbramble~$\Beta_3$ consisting of elements of~${(\Beta-\Beta_1)-\Beta_2}$ intersecting~$P_2$ has order exactly~${2 \lfloor c^* r \log r \rfloor + 2}$. 
        \item  \label{itm:fewcycle-d} Moreover, the subbramble~${\Beta_4 := \Beta-\Beta_1-\Beta_2-\Beta_3}$ has order at least \linebreak ${\tw(G) + 1 - c_1 - 4 \lfloor c^* r \log r \rfloor - 2}$ and no element of~$\Beta_4$ contains a vertex of ${P \cup P_1 \cup P_2}$. 
    \end{enumerate}
    
    By \cref{menger}, there is a set~$S$ of at least~${2 \lfloor c^* r \log r \rfloor + 2}$ vertex-disjoint paths from~$V(P_1)$ to~$V(P_2)$ in~${G - V(P)}$. 
    If any path~$Q$ in~$S$ has at most~${\frac{1}{2} c_1 - 2}$ internal vertices, then consider a cycle~$O$ in~${G[V(Q\cup P\cup P_1\cup P_2)]}$ containing~$P$. 
    Since~$O$ has length at least~${\abs{V(P)} + 2 \geq c_1}$, it suffices to embed the graph~$H'$ consisting of~${r-1}$ vertex-disjoint cycles of lengths~${c_2,c_3,\ldots, c_r}$ as a minor in~${G-V(O)}$. 
    By \ref{itm:fewcycle-d} and \cref{duality}, the treewidth of~${G-V(O)}$ is at least ${\tw(G) - c_1 - 4 \lfloor c^* r \log r \rfloor - 2 - (\frac{1}{2} c_1 - 2)}$, which is at least
    \[
        \left( \frac{3}{2} \sum_{i=2}^r c_{i} \right) + c \sum_{k=1}^{r-1} k \log k.
    \]
    Thus, ${G-V(O)}$ has~$H'$ as a minor by the induction hypothesis. 
    This implies that~$G$ has~$H$ as a minor. 

    Hence, we may assume that every path in~$S$ has at least~${c_1/2}$ vertices. 
    It follows that every cycle in~${G' := P_1 \cup P_2 \cup \bigcup S}$ has length at least~$c_1$. 
    Therefore it suffices to find a packing of~$r$ vertex-disjoint cycles in~$G'$. 
    It is easy to see that a spanning tree of~$G'$ can be obtained by deleting a single edge of all but one path in~$S$, so ${\abs{E(G')} - \abs{V(G')} = \abs{S} - 2 = -1 + (\abs{S} - 1)}$, and by construction~$G'$ has maximum degree~$3$. 
    Deleting a vertex decreases the difference between the number of edges and the number of vertices by at most~$2$. 
    Therefore every hitting set for the cycles in~$G'$ has size at least~${(\abs{S} - 1)/2 > \lfloor c^* r \log r \rfloor}$. 
    By \cref{thm:ErdosPosa}, $G'$ has a packing of~$r$ vertex-disjoint cycles, and hence~$G$ has~$H$ as a minor. 
\end{proof}

\cref{fewcycles} follows from \cref{fewcycles2,fewcycles3ormore}.

\section{Excluding Subdivisions of Small Planar Graphs}\label{sec:outerplanar}

In this section, we prove \cref{outerplanar} using an approach similar to that applied to prove \cref{twistedprism1}. 

In place of \cref{2DRP}, we apply a corollary of the following result due to Robertson and Seymour~\cite[(5.3)]{RS1995}). 
For a graph $H$, an \emph{$H$-model} in a graph~$G$ is a collection $(T_v)_{v\in V(H)}$ of vertex-disjoint trees in $G$ such that for every edge $xy$ of $H$, there is an edge joining $T_x$ and $T_y$ in $G$.
We note that a graph contains an $H$-model if and only if it has $H$ as a minor. 

\begin{theorem}[Robertson and Seymour~{\cite[(5.3)]{RS1995}}]
    \label{rDRP}
    Let $d$ be a positive integer and 
    $T$ be a set of at most $2d$ vertices of a graph~$G$.
    Let $(T_v)_{v\in V(K_{4d})}$ be a $K_{4d}$-model in $G$. 
    If $G-Y$ has a component containing at least one $T_v$ and at least one vertex of $T$ for all $Y\subseteq V(G)$ with $\abs{Y}<\abs{T}$,
    then $G$ has a $K_{2d}$-model such that each vertex of~$T$ is contained in a distinct tree of the model.
\end{theorem}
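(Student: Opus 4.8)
The plan is to use Menger's theorem to link the terminal set $T$ into the branch sets of the given $K_{4d}$-model by vertex-disjoint paths, and then to exploit the slack $4d \ge 2\abs{T}$ to clean things up. Throughout write $\mathcal{Z} := \bigcup_{v \in V(K_{4d})} V(T_v)$ and $m := \abs{T} \le 2d$.

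First I would observe that no set $Y$ with $\abs{Y} < m$ separates $T$ from $\mathcal{Z}$ in $G$: if such a $Y$ existed, then applying the hypothesis to it --- legitimate since $\abs{Y} < m = \abs{T}$ --- would produce a component of $G - Y$ meeting both $T$ and $\mathcal{Z}$, a contradiction. Hence, by Menger's theorem, there are $m$ pairwise vertex-disjoint $T$--$\mathcal{Z}$ paths. Choosing them of minimum total length, each is internally disjoint from $\mathcal{Z}$ and meets $\mathcal{Z}$ precisely in its last vertex, which lies in a single branch set; let $P_i$ run from $t_i \in T$ into a branch set $T_{w_i}$, so that $w_1, \dots, w_m \in V(K_{4d})$, possibly with repetitions.

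If $w_1, \dots, w_m$ are pairwise distinct, the proof essentially finishes: enlarging $T_{w_i}$ to $T_{w_i} \cup V(P_i)$ keeps it a connected subgraph, now containing the terminal $t_i$, and these enlarged sets together with any $2d - m$ of the remaining $4d - m$ branch sets form a $K_{2d}$-model in which the $m$ terminals lie in distinct trees. The case of a collision --- two paths entering the same branch set --- is where the real work lies, and is the step I expect to be the main obstacle. Since the $m$ paths touch at most $m \le 2d$ branch sets, at least $4d - m \ge 2d$ branch sets are untouched, and the over-used branch tree $T_w$ is adjacent in $G$ to every one of them; so one of the colliding paths can be rerouted, walking from its attachment point through $T_w$ and out along such an edge into a hitherto untouched branch set. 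Keeping the rerouted portions pairwise disjoint, and disjoint from the attachment points retained inside $T_w$, is the delicate point: I would control it using minimality of the path system (a shorter reroute contradicts minimal total length) together with the freedom to discard a branch set that loses an adjacency, which is affordable because there are at most $m - 1 \le 2d - 1$ collisions against $2d$ spare branch sets.

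An alternative that avoids the rerouting is induction on the pair $\bigl(\abs{V(G)}, \sum_v \abs{V(T_v)}\bigr)$: one deletes or suppresses any vertex outside $T \cup \mathcal{Z}$ and contracts an edge inside any branch set of size at least two, checking each time that the separation hypothesis survives --- it only ever mentions sets of size less than $\abs{T}$ --- and that a rooted $K_{2d}$-model lifts back to $G$; in the base case all branch sets are single vertices, so two distinct terminals can never share one, and the desired model is immediate. The fiddly part of this variant is verifying that the hypothesis is preserved under suppressing degree-two vertices and under contracting an edge incident with a terminal.
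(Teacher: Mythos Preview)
The paper does not prove this theorem at all: it is quoted verbatim from Robertson and Seymour~\cite[(5.3)]{RS1995} and used as a black box to derive \cref{coro}. So there is no ``paper's own proof'' to compare against, and what follows is an assessment of your sketch on its merits.

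Your Menger step is fine: the hypothesis does forbid any small $T$--$\mathcal{Z}$ separator, exactly as you argue. The gap is in the collision repair. When $P_i$ and $P_j$ both land in $T_w$, you propose to walk $P_j$ \emph{through} $T_w$ to an edge leading into an untouched branch set $T_{w'}$. But then the tree you build for $t_j$ uses vertices of $T_w$, and so does the tree $T_w\cup P_i$ you are assigning to $t_i$; nothing in your minimality argument forces the $a_j$--to--$T_{w'}$ walk inside $T_w$ to avoid $a_i$. Concretely, $T_w$ could be a path with $a_i$ separating $a_j$ from every edge of $G$ that leaves $T_w$ for an untouched branch set, and then no reroute disjoint from $a_i$ exists. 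Having $2d$ spare branch sets does not help here, because all $2d$ of the relevant $G$-edges out of $T_w$ could sit on the wrong side of $a_i$. Resolving this is exactly the content of the Robertson--Seymour argument (and of later linkedness proofs in the Thomas--Wollan style): one has to reorganise the branch sets themselves, not just the paths, and the bookkeeping is not the one-line fix you suggest.

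Your inductive alternative also does not close. After deleting everything outside $T\cup\mathcal{Z}$ and contracting each branch set to a point, you can still have terminals $t\in T\setminus\mathcal{Z}$ sitting outside every branch set; your stated base case (``two distinct terminals can never share a singleton branch set'') tacitly assumes $T\subseteq\mathcal{Z}$, which has not been arranged. You would need a further reduction that absorbs such terminals into branch sets while preserving the separation hypothesis, and that step again runs into the same disjointness issue as the rerouting argument.
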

     
\begin{corollary}
    \label{coro}
    Let~$d$ be a positive integer and 
    let~$Z$ be a set of at most~$d$ vertices of a graph~$G$.
    Let~$\Beta$ be a bramble of~$G$ of order at least~$4d$.
    If there does not exist a cutset~$Y$ of size less than~$d$ for which the unique component of~${G-Y}$ 
    containing an element of~$\Beta$ is disjoint from~$Z$, then either 
    \begin{enumerate}[label=(\alph*)]
        \item there is a $K_d$-model such that every vertex of~$Z$ is contained in a distinct element of the model, or 
        \item \label{itm:coro2} there is a minor of~$G$ which contains no $K_{4d}$-minor and which contains a bramble whose order is the same as the order of~$\Beta$. 
    \end{enumerate}
\end{corollary}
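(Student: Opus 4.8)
The plan is to argue by induction on $\abs{V(G)}$, combining \cref{rDRP} with the structural information that the cutset hypothesis provides. If $G$ has no $K_{4d}$-minor, then conclusion \ref{itm:coro2} holds with $J=G$; so we may assume $G$ has a $K_{4d}$-minor and fix a $K_{4d}$-model $(T_v)_{v\in V(K_{4d})}$. We then apply \cref{rDRP} with its parameter set to $d$, with $T:=Z$ (note $\abs{Z}\le d\le 2d$), and with this model. If the hypothesis of \cref{rDRP} holds --- that is, for every $Y\subseteq V(G)$ with $\abs{Y}<\abs{Z}$ some component of $G-Y$ meets both a tree $T_v$ and $Z$ --- then we obtain a $K_{2d}$-model in which every vertex of $Z$ lies in a distinct tree; keeping the at most $\abs{Z}\le d$ trees that meet $Z$ together with enough of the other trees to have $d$ trees in all gives a $K_d$-model, which is conclusion (a). So we may assume the hypothesis of \cref{rDRP} fails: there is a set $Y_0$ with $\abs{Y_0}<\abs{Z}\le d$ such that no component of $G-Y_0$ meets both some tree and $Z$.

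Since $\abs{Y_0}<d\le\ord(\Beta)$, some element of $\Beta$ avoids $Y_0$, and since any two such elements have connected union avoiding $Y_0$, they all lie in a single component $W$ of $G-Y_0$. Applying the cutset hypothesis to $Y_0$ shows that $W$ meets $Z$, so $W$ contains no $T_v$. As $Y_0$ meets fewer than $d$ of the $4d$ trees, at least $3d+1$ of them survive intact in $G-Y_0$; an edge of the model between two intact trees is a single edge of $G$ and so cannot join two components of $G-Y_0$, so all of these intact trees lie in one component $W_1$ with $W_1\ne W$, and $W_1$ misses $Z$ by the choice of $Y_0$. Thus $(V(G)\setminus V(W_1),\,V(W_1)\cup Y_0)$ is a separation of $G$ of order $\abs{Y_0}<d$ whose small side $V(W_1)$ avoids $Z$ and avoids every element of $\Beta$ disjoint from $Y_0$, while $G[V(W_1)]$ already contains a $K_{3d+1}$-model.

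I would then apply the induction hypothesis to the strictly smaller graph $G':=G-V(W_1)$ with the same set $Z$ and a suitable bramble $\Beta'$, and conclude: a $K_d$-model of $G'$ in which every vertex of $Z$ lies in a distinct tree is such a model in $G$ and gives (a), while a minor $J$ of $G'$ --- hence of $G$ --- with no $K_{4d}$-minor and a bramble of order $\ord(\Beta')=\ord(\Beta)$ gives \ref{itm:coro2}. One must also check that $G'$ satisfies the cutset hypothesis: a cutset of $G'$ of size less than $d$ whose $\Beta'$-component avoids $Z$ would, together with $Y_0$, be a forbidden cutset of $G$. The step I expect to be the main obstacle is producing a bramble $\Beta'$ of $G'$ with $\ord(\Beta')=\ord(\Beta)$: every element of $\Beta$ avoiding $Y_0$ already lies in $W\subseteq V(G')$, but an element meeting $Y_0$ may stray into $V(W_1)$, so one needs a bramble-surgery argument --- exploiting that $V(W_1)$ is a single component of $G-Y_0$, so that each element of $\Beta$ is attached to $V(W_1)$ only through $Y_0$ --- to reroute the parts of bramble elements lying inside $V(W_1)$ without decreasing the order; taking $Y_0$ to be in addition a minimal separator between $W$ and $W_1$ should help here. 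Everything else is a routine combination of \cref{rDRP} with the cutset hypothesis.
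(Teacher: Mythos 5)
Your proposal follows the paper's overall structure: assume $G$ has a $K_{4d}$-model (else (b) holds trivially), apply \cref{rDRP} with $T := Z$ to obtain either (a) or a cutset $Y$ of size less than $\abs{Z} \le d$ separating the intact trees from $Z$, locate the $\Beta$-component $W$ of $G-Y$ (which meets $Z$ by the cutset hypothesis) and the model-component $W_1$ (which misses $Z$), and recurse on a smaller graph. However, the step you flag as ``the main obstacle'' is a genuine gap, not a technicality. If you take $G' := G - V(W_1)$ with no further modification, a bramble element $B$ that meets $Y$ and wanders into $W_1$ may fall into several pieces after $W_1$ is deleted, and your proposed surgery does not reconnect them; dropping such elements instead leaves only a subbramble of order at least $\ord(\Beta) - \abs{Y} \ge 3d+1$, which is below the threshold $4d$ needed to recurse and does not deliver (b) with the required order. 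Your propagation of the cutset hypothesis also does not close: a cutset $Y'$ of $G'$ with $\abs{Y'}<d$ together with $Y$ has size up to $2d-2$, so it is not a cutset forbidden by the hypothesis on $G$.

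The paper closes this gap by applying \cref{rDRP} a \emph{second} time, to $G[Y \cup W_1]$ with $T := Y$, after taking $Y$ of \emph{minimum} cardinality among cutsets separating an intact tree from $Z$. Minimality is exactly what verifies the hypothesis of \cref{rDRP} in this second application, and what comes out is a $K_{\abs{Y}}$-model inside $G[Y \cup W_1]$ with each vertex of $Y$ in a distinct tree; hence $G'$, defined as $G - V(W_1)$ \emph{with a clique added on $Y$}, is a minor of $G$. Once $Y$ is a clique, the sets $B \cap V(G')$ for $B \in \Beta$ genuinely form a bramble of $G'$ of order at least $\ord(\Beta)$: any detour of $G[B]$ through $W_1$ enters and exits via $Y$ and can be shortcut along clique edges, and no element of $\Beta$ lies entirely inside $W_1$ (by the same $W \neq W_1$ argument you gave). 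The clique is also what transfers the cutset hypothesis at no cost in size: for $Y' \subseteq V(G')$ with $\abs{Y'}<d$, the set $W_1$ reattaches to $G'-Y'$ only through the clique $Y\setminus Y'$, hence into a single component, and $W_1$ is disjoint from $Z$, so $Y'$ alone (not $Y'\cup Y$) would already be a forbidden cutset of $G$. This second use of \cref{rDRP}, driven by the minimality of $Y$, is the ingredient your proposal is missing.
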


\begin{proof}
    Suppose for contradiction that~$G$ is a graph containing a set~$Z$ of vertices and a bramble~$\Beta$ violating the statement of the corollary, and subject to this suppose that~$\abs{V(G)}$ is as small as possible. 
    Then~$G$ has~$K_{4d}$ as a minor because otherwise \ref{itm:coro2} holds. 
    Furthermore, given a $K_{4d}$-model in~$G$, there must be a cutset of size less than~$d$  separating a tree of the model 
    from~$Z$, or by applying \Cref{rDRP} and considering the trees of the resultant model 
    containing the elements of~$Z$ we are done. 

    We consider such a cutset~$Y$ of minimal cardinality. 
    Letting~$U$ be the 
    component of~${G-Y}$ containing a tree of the model and applying \Cref{rDRP} to $G[Y \cup U]$,
    we obtain that the graph obtained from~${G-U}$ by adding edges so~$Y$ is a clique is a minor~$G'$ of~$G$. 
    Furthermore, since we did not completely delete any bramble element, the set consisting of the intersection of each bramble element with this minor is a bramble~$\Beta'$ of the same order in~$G'$. 
    Since the component we deleted was attached to a clique of~$G'$, there is no cutset~$Y'$ in~$G'$ 
    of size less than~$d$ for which the unique component of~${G'-Y'}$ 
    containing an element of our new bramble is disjoint from~$Z$. 
    But now in~$G'$ the set~$Z$ together with~$\Beta'$ must violate the statement of the corollary, contradicting the minimality of~$\abs{V(G)}$. 
\end{proof}

In place of \cref{planartw}, we apply the following result of Demaine and Hajiaghayi~\cite{DH2005}.

\begin{theorem}[Demaine and Hajiaghayi~\cite{DH2005}]
    \label{Klfreetw}
     For every positive integer~$a$, there is an integer~${c_a > 1}$ such that for every positive integer $g$, every graph of treewidth at least~${c_a g}$ has  $K_a$ or the ${g \times g}$ grid as a minor.
\end{theorem}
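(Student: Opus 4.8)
The plan is to derive this from the Graph Minors Structure Theorem of Robertson and Seymour, using the planar linear grid‑minor bound \cref{planartw} as the base case. It suffices to prove that there is a constant $c_a$ such that every graph $G$ with no $K_a$ minor and treewidth at least $c_a g$ contains a $g\times g$ grid minor --- if $G$ already has a $K_a$ minor we are done. A purely quantitative argument through sparsity cannot suffice here, since a bounded‑degree expander on $n$ vertices has treewidth $\Theta(n)$ but largest grid minor only $O(\sqrt n)$; one must exploit the topology of $K_a$‑minor‑free graphs.

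First I would localize the treewidth in one torso. Apply the Structure Theorem to obtain a tree‑decomposition of $G$ of adhesion at most $h=h(a)$ whose torsos are each $h$‑almost‑embeddable in a surface of Euler genus at most $h$. Each adhesion set is a clique in the torsos that contain it, hence lies in a single bag of any tree‑decomposition of such a torso, so gluing optimal tree‑decompositions of the torsos along these bags gives $\tw(G)\le\max_t\tw(\mathrm{torso}_t)$; thus some torso $G^*$ has $\tw(G^*)\ge c_a g$. A standard argument --- choosing the decomposition so that each adhesion clique is realised by a connected piece on the far side, and enlarging the target grid by an $h$‑bounded amount to absorb the finitely many virtual edges --- lets one pull a grid minor of $G^*$ back to $G$, so it is enough to handle $G^*$.

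Next I would strip the apices and vortices of $G^*$. Deleting its at most $h$ apex vertices drops treewidth by at most $h$, leaving a graph embedded in a surface $\Sigma$ of Euler genus $\gamma\le h$ up to at most $h$ vortices of depth at most $h$. Each vortex is a bounded‑pathwidth gadget attached along an arc of a face boundary; inserting its path‑decomposition bags into that face and contracting replaces the graph by a minor genuinely embedded in $\Sigma$ whose treewidth has dropped by at most an $h$‑bounded additive (or multiplicative) amount, still linear in $g$ for $c_a$ large enough. We are reduced to a graph $G_0$ cleanly embedded in a surface of Euler genus $\gamma\le h$ with $\tw(G_0)$ linear in $g$, which I would settle by induction on $\gamma$: the case $\gamma=0$ is exactly \cref{planartw}, and for $\gamma\ge1$ one uses the dichotomy ``either some closed disk in $\Sigma$ carries a sub‑bramble of linear order (a planar region, finished by \cref{planartw}), or every noncontractible cycle is long, so the embedding is highly locally planar and one can cut $\Sigma$ along a short noncontractible curve'', which lowers the Euler genus while dropping treewidth by an amount controlled by the curve's length; then apply induction.

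The main obstacle is the bounded‑genus step, with the vortex cleanup as a secondary one: extracting the \emph{linear} dependence on a surface. The planar estimate is handed to us, but pushing linearity through ``cut along a noncontractible curve'' demands a bound on the length of the curve one cuts along, which forces the locally‑planar analysis and the extraction of a ``surface grid'' from high treewidth; this part --- together with the structure‑theorem reduction above --- is essentially the content of the argument of Demaine and Hajiaghayi and of the surface‑minor results it builds on, and I would not expect to shortcut it.
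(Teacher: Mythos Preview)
The paper does not prove this statement at all: \cref{Klfreetw} is quoted from Demaine and Hajiaghayi~\cite{DH2005} and used as a black box in the proof of \cref{outerplanar}, with no argument supplied. So there is nothing in the paper to compare your proposal against.

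For what it is worth, your outline is broadly faithful to the actual Demaine--Hajiaghayi argument: reduce via the Robertson--Seymour structure theorem to a single $h$-almost-embeddable torso, peel off apices and vortices at bounded cost, and then establish the linear grid-minor bound for graphs embedded in a surface of bounded Euler genus, with the planar case as base. You are also right that the substantive work sits in the bounded-genus step (and, to a lesser extent, in cleanly handling vortices and in lifting a grid minor of the torso back to~$G$ through the virtual adhesion edges). Your sketch of those steps is at the level of a plausibility argument rather than a proof; in particular, the claim that one can always cut along a \emph{short} noncontractible curve when there is no planar disk of large bramble order, and that this drops treewidth by only a controlled amount, is exactly the nontrivial surface lemma one has to prove, and you correctly flag it as the main obstacle rather than pretending to dispatch it.
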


\begin{lemma} 
    \label{corollary}
    Let~$H$ be a minor of a ${g \times g}$ grid.
    Let $\ell$ be a positive integer.
    If~$G$ is a subdivision of~$H$ obtained by subdividing each edge less than~$\ell$ times, 
    then~$G$ is a minor of a ${\lceil 2\sqrt{\ell}\rceil g
    \times \lceil2 \sqrt{\ell}\rceil g}$ grid.
\end{lemma}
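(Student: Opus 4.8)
### Proof proposal for \texorpdfstring{\cref{corollary}}{Lemma}

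The plan is to realize the subdivided graph $G$ inside a suitably finer grid by ``thickening'' the drawing of $H$ in the $g\times g$ grid. Since $H$ is a minor of the $g\times g$ grid, fix an $H$-model in the grid; after contracting within each branch set we may assume each branch set is connected and the edges of $H$ are carried by grid edges between branch sets. The key point is that we have plenty of room to route the subdivision paths: each edge of $H$ must be subdivided fewer than $\ell$ times, and scaling the grid by a factor of $m:=\lceil 2\sqrt{\ell}\rceil$ replaces each grid vertex by an $m\times m$ subgrid and each grid edge by $m$ parallel paths (plus the $m\times m$ block in between). An $m\times m$ grid block together with its incident bundles has roughly $m^2\ge 4\ell>\ell$ vertices available, which is more than enough to absorb up to $\ell-1$ subdivision vertices on any single edge of $H$ passing through it.

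Concretely, I would proceed as follows. First, embed $H$ as a minor of the $g\times g$ grid and record, for each vertex and each edge of $H$, which grid vertices/edges it uses; we may take the branch sets to be single vertices and the model edges to be single grid edges (any minor of a grid has such a model after suitable contractions, or one can simply take $H$ to be an actual subgraph of the grid obtained by deleting and contracting, and track the contraction). Second, pass to the $mg\times mg$ grid and apply the blow-up: the vertex $(i,j)$ of the $g\times g$ grid corresponds to the $m\times m$ block of rows $\{mi-m+1,\dots,mi\}$ and columns $\{mj-m+1,\dots,mj\}$, and an edge of the $g\times g$ grid corresponds to the bundle of $m$ disjoint paths between consecutive blocks. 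Third, for each edge $e=xy$ of $H$ that is subdivided $t<\ell$ times in $G$, route the corresponding path of length $t+1$ through the blocks and bundles that the model edge of $e$ traversed: within a single $m\times m$ block one can route a path visiting any prescribed number of vertices up to about $m^2/2\ge 2\ell$, using a snake-like traversal, so the $t$ subdivision vertices fit comfortably. The branch vertices of $H$ are placed at designated corners of their blocks so that distinct edges of $H$ incident to the same vertex enter through distinct bundles and do not collide. Finally, conclude that this drawing exhibits $G$ as a subgraph, hence a minor, of the $mg\times mg=\lceil2\sqrt{\ell}\rceil g\times\lceil2\sqrt{\ell}\rceil g$ grid.

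The bookkeeping that needs care is showing the routings are genuinely vertex-disjoint: different edges of $H$ that run through the same block must be assigned disjoint portions of that block, and the snake-traversals carrying subdivision vertices must not overlap. The clean way to handle this is to note that the model edges of $H$, being edges of the $g\times g$ grid, use each grid vertex in a bounded way; after the $m$-fold blow-up each $m\times m$ block is crossed by at most $4$ bundles (one per side) plus possibly hosts one branch vertex, so it suffices to partition an $m\times m$ grid into $O(1)$ vertex-disjoint ``channels,'' each a connected subgrid containing at least $2\ell$ vertices — which holds once $m\ge 2\sqrt\ell$, i.e.\ $m^2\ge 4\ell$. I expect this disjoint-channel partition of the block, and checking that each channel has enough vertices to absorb $<\ell$ subdivision points, to be the main (though elementary) obstacle; everything else is a direct translation of the grid-minor model through the blow-up.
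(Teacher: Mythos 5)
Your overall idea — blow up the $g\times g$ grid to leave enough room to host the subdivision vertices — is the same as the paper's, but the specific blow-up you use is different, and the way you've set it up has two concrete problems.

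First, the reduction ``we may take the branch sets to be single vertices and the model edges to be single grid edges'' is not valid. The grid has maximum degree $4$, so if $H$ has a vertex of degree $\geq 5$ there is no $H$-model in the grid with singleton branch sets; the parenthetical remark (``take $H$ to be a subgraph obtained by deleting and contracting'') does not resolve this, since contracting destroys the grid structure. You really do need to carry the branch \emph{trees} through the construction, which complicates the routing (in particular, the branch set in the blown-up grid must remain connected across all the blocks it touches, while simultaneously leaving room in those blocks for the subdivision channels).

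Second, your quantitative claim is internally inconsistent. You say each $m\times m$ block must be partitioned into up to $5$ vertex-disjoint channels (four bundle crossings plus a branch vertex), each with at least $2\ell$ vertices, and that this follows from $m^2\geq 4\ell$. But $5\cdot 2\ell = 10\ell > 4\ell$, so the partition you describe cannot exist. It is plausible that a more careful accounting (each channel only needs on the order of $\ell/2$ vertices if subdivision vertices are split between the two endpoint blocks of a model edge, and not all four sides of every block carry a model edge) could rescue the constant $\lceil 2\sqrt{\ell}\rceil$, but you would have to actually do that accounting, and your stated argument does not.

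By contrast, the paper's blow-up is thinner and sidesteps both issues: it maps each grid vertex $(i,j)$ to a \emph{single} vertex of the larger grid and each grid \emph{edge} to a snake path of length roughly $2r^2$ occupying a dedicated $r\times 2r$ region. This automatically gives every edge at least $\ell-1$ internal vertices with the regions for distinct edges vertex-disjoint, so there is no block-partitioning step at all; one simply applies the map $\varphi$ to the whole subgraph of the $g\times g$ grid formed by the $H$-model and contracts. If you want to keep your block-and-bundle picture you would need to (i) drop the singleton-branch-set assumption and route through branch trees, and (ii) redo the channel count so that the total channel size actually fits inside $m^2$; the paper's vertex-to-vertex, edge-to-path map is the cleaner way to get the same effect.
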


\begin{proof}
    Let~$G$ be a subdivision of~$H$ such that each edge of~$H$ is subdivided less than~$\ell$ times. 
    Let~${r := \lceil \frac{1}{2} \sqrt{2\ell-1} - \frac{1}{2} \rceil}$. 
    Note that the~${2r \times r}$ grid has a Hamiltonian cycle using at least one edge in the first column and at least one edge in the first row. 
    
    We now present a mapping~$\varphi$ from the ${g \times g}$ grid to the ${(2r+1) g \times (2r+1) g}$ grid
    such that if a graph~$J$ is a subgraph of the ${g \times g}$ grid, then its image~$\varphi(J)$ is a subdivision of~$J$ in which each edge is subdivided at least~$\ell$ times. 
    We regard ${\{0,1,2,\ldots,g-1\} \times \{0,1,2,\ldots,g-1\}}$ as the vertex set of the ${g \times g}$ grid. 
    For vertices, we define~${\varphi(i,j) := ((2r+1)g,(2r+1)g)}$. 
    For a vertical edge~${(i,j)(i,j+1)}$ of the ${g \times g}$ grid, 
    we map it to a path~$P$ of length~${2r^2+2r}$ from~${\varphi(i,j)}$ to~${\varphi(i,j+1)}$ 
    where the internal vertices of~$P$ is 
    \[
        \{ (x,y) \mid (2r+1)i \leq  x < (2r+1)i+r,~ (2r+1)j < y < (2r+1)(j+1)\}.
    \]  
    For a horizontal edge~${(i,j)(i+1,j)}$ of the ${g \times g}$ grid, 
    we map it to a path~$P$ of length~${2r^2+2r}$ from~${\varphi(i,j)}$ to~${\varphi(i+1,j)}$ 
    where the set of internal vertices of~$P$ is the union of~${\{ (x,(2r+1)j) \mid (2r+1)i < x < (2r+1)(i+1) \}}$ and ${\{ (x,y) \mid (2r+1)i+r < x < (2r+1)(i+1),~(2r+1)j < y < (2r+1)(j+1) \}}$. \linebreak
    Since the ${(2r)\times r}$ grid has a Hamiltonian cycle using at least one edge in the first column and at least one edge in the first row, such a path exists. 
    Note that such a path~$P$ has~${2r^2+2r}$ vertices
    and~${2r^2+2r \geq \frac{2\ell-1}{2} - \frac{1}{2} = \ell-1}$. 
    
    Now it follows that the ${(2r+1)g \times (2r+1)g}$ grid has a $G$-model because we can map the $H$-model in the ${g \times g}$ grid by~$\varphi$. 
    Note that 
    \[
        {2r+1 \leq \lceil \sqrt{2\ell-1} + 1 \rceil \leq 
        \lceil 2\sqrt{\ell}\rceil}. \qedhere
    \] 
\end{proof}

With these preliminaries out of the way, we turn to the proof of the theorem. 

\outerplanar*

\begin{proof}
    Let~$c_{8r}$ be the constant guaranteed by \cref{Klfreetw} for~${a = 8r}$.
    Robertson, Seymour, and Thomas~\cite{RST1994} showed that there exists an integer $g_{2r}$ such that every planar graph with at most~$2r$ vertices is a minor of the~${g_{2r} \times g_{2r}}$ grid. 
    Let~${b_r := \max\{2r^2,8r,12c_{8r}^2g_{2r}^2\}}$.
    
    We claim that for every positive integer~$n$, we have 
    \(
        \frac{r-1}{2}n + b_r  \geq c_{8r} \lceil 2\sqrt{n} \rceil g_{2r}
    \).
    To prove this, we may assume that 
    \(
        {n < 2c_{8r}\lceil 2\sqrt{n}\rceil g_{2r}}
    \). 
    Then 
    \( 
        {n <2 c_{8r} (3\sqrt{n}) g_{2r}}
    \) and therefore 
    ${2\sqrt{n} \leq 12c_{8r}g_{2r}}$. This implies that~${c_{8r}\lceil 2\sqrt{n}\rceil g_{2r} \leq 12 c_{8r}^2 g_{2r}^2 \leq b_r}$.

    Assume for contradiction that the theorem is false for this choice of~$b_r$, and consider a minimal counterexample~$H$ which is a subdivision of a graph~$H'$ with~${d := \abs{E(H')} \leq r}$.
    We can assume that~$H'$ does not contain an isolated vertex~$x$ as then so does~$H$ and \[{f(H-x) \geq f(H)-1\geq \frac{r+1}{2}\abs{V(H-x)} + b_r}\] contradicting the minimality of~$H$. 
    So~${\abs{V(H')} \leq 2d}$. 
    
    We let~${Q_1, \ldots, Q_d}$ be the paths of~$H$ corresponding to the edges of~$H'$ and let~$\ell_i$ be the length of~$Q_i$.
    We pick this labelling so that $\ell_i\geq \ell_{i+1}$ for all $i\in \{1,2,\dots,d-1\}$.
    
    Let~$G$ be a graph that does not contain~$H$ as a minor 
    such that the treewidth of~$G$ is at least~${\frac{r+1}{2} \abs{V(H)} + b_r}$. 
    Let~$\Beta$ be a maximum order bramble in~$G$.
    By~\cref{duality}, $\Beta$ has order at least~${\frac{r+1}{2} \abs{V(H)} + b_r + 1}$. 
    By~\cref{cyclebramble}, there is a cycle~$C$ intersecting all elements of~$\Beta$.

    Note that 
    \begin{align*}
    \sum_{i=1}^d \left( \sum_{j=i}^d(\ell_j-1) + \left\lfloor \frac{b_r}{d} \right\rfloor \right)
    &\leq \sum_{i=1}^d (\ell_i-1) i  +  b_r\\
    &\leq \frac{1}{d} \left(\sum_{i=1}^d (\ell_i-1)\right)\left( \sum_{i=1}^d i \right)+ b_r \\
    &\leq \frac{\abs{V(H)}}{d}\frac{d(d+1)}{2}+b_r\\
    &\leq \frac{r+1}{2}\abs{V(H)}+b_r, 
    \end{align*}
    where the second inequality follows from Chebyshev's sum inequality. 
    Therefore, 
    by \cref{lem:pathpartition}, there exists a partition of~$C$ into vertex-disjoint subpaths~$R_1$, $\ldots$, $R_{d}$ 
    such that the order of the subbramble~$\Beta_i$ of~$\Beta$ consisting of elements of~$\Beta$ intersecting~$R_i$ and not intersecting $\bigcup_{j=1}^{i-1}R_j$ is at least~${\sum_{j=i}^d(\ell_j-1) + \lfloor \frac{b_r}{d} \rfloor}$ for each~${i \in \{1,2,\ldots,d\}}$. 
    
    Let us fix an orientation of~$C$. 
    For each~${i \in \{1,2,\ldots,d\}}$, let~$P_i$ be path formed by the last~${\ell_i-1}$ vertices of~$R_i$. 
    We let~${P_i' := R_i-V(P_i)}$.
    Let~$Z$ be the set of vertices that are endpoints of~$P_i'$ for some~${i \in \{1,2,\ldots,d\}}$. 
    Let~$\Beta'$ be the subbramble of~$\Beta$ consisting of the elements of~$\Beta$ not intersecting~${\bigcup_{i=1}^d V(P_i)}$. 
    Since~${\sum_{i=1}^d (\ell_i-1) < \abs{V(H)}}$, 
    the order of~$\Beta'$ is at least~${\frac{r-1}{2}\abs{V(H)}+b_r+1}$. 
    
    We note that for every~${i \in \{1,2,\ldots,d\}}$, the order of the subbramble~$\Beta_i'$ of~$\Beta'$ consisting of all elements of~$\Beta'$ intersecting~${V(P_i')}$ is at least~${\lfloor \frac{b_r}{d}\rfloor \geq 2r \geq 2d}$. 
    This is because the order of~$\Beta_i$ is at least~${\sum_{j=i}^d (\ell_j-1)+\lfloor \frac{b_r}{d}\rfloor}$ and
    so the subbramble of~$\Beta_i$ consisting of all elements of~$\Beta_i$ not intersecting~${\bigcup_{j=i}^d V(P_j)}$ has order at least~${\lfloor\frac{b_r}{d}\rfloor}$ and this subbramble is a subbramble of~$\Beta_i'$.

    Let~${G' := G - \bigcup_{i=1}^{d}V(P_i)}$. 
    Now, any set~${Y \subseteq V(G)}$ of size at most~${2d-1}$ intersects some~$P_i'$ in at most one vertex. 
    Since the order of~$\Beta'_i$ is at least~$2d$, there is some~${B \in \Beta'_i}$ disjoint from~$Y$.
    Some endpoint of~$P'_i$ is in the same component~$K$ of~${G' - Y}$ as~$B$, because~$B$ intersects~$V(P_i')$. 
    Since $\Beta'$ is a bramble, $K$ is the unique component of~${G' - Y}$ containing an element of~$\Beta'$.
    Thus $G'$ has no cutset~$Y$ of size less than~$2d$ that  such that the unique component of~${G' - Y}$ containing an element of~$\Beta'$ is disjoint from~$Z$. 
    
    Now, if there is a $K_{2d}$-model in~${G'}$ such that each element of~$Z$ is contained in a distinct tree of the model, we can find~$H$ as a minor of~$G$ as follows.
    We first contract each tree of the model so that~$Z$ becomes a clique. 
    We then embed the internal vertices of each path~$Q_i$ using the path~$P_i$, and contract appropriate subcliques of~$Z$ to obtain the vertices in~${V(H')}$. 

    Note that~$\Beta'$ is a bramble of~$G'$ and 
    the order of~$\Beta'$ is at least~${b_r \geq 8r \geq 8d}$.
    Hence, by \Cref{coro} applied to~$\Beta'$ and~$2d$, we may assume that~$G'$ has a minor~$G''$ such that~$G''$ has treewidth at least~${\frac{r-1}{2}\abs{V(H)}+b_r}$ and~$G''$ does not contain~$K_{8r}$ as a minor. 
    Now, since 
    \(
        \frac{r-1}{2}\abs{V(H)} + b_r \geq c_{8r} \lceil 2\sqrt{\abs{V(H)}}  \rceil g_{2r}
    \),
    \cref{Klfreetw} implies that~$G''$ has a ${(\lceil2\sqrt{\abs{V(H)}} \rceil g_{2r})\times (\lceil2\sqrt{\abs{V(H)}} \rceil g_{2r})}$ grid as a minor.
    By \cref{corollary} and the definition of~$g_{2r}$, it follows that~$G''$ contains~$H$ as a minor, a contradiction. 
\end{proof}

\section{Open problems}\label{sec:open}

\cref{smallcomponents,prop:fewcycles}
suggest the following.
\begin{question}
    Is it true that for every constant~$d$, ${f(H) = O(\abs{V(H)})}$ for the family of graphs~$H$ containing at most~${\frac{d\abs{V(H)}}{\log \abs{V(H)}}}$ vertex-disjoint cycles? 
\end{question}

By \cref{removingtree}, if~$H$ is the disjoint union of~$H_1$ and~$H_2$ where~$H_2$ is a forest, then~${f(H) \leq f(H_1) + f(H_2)}$. 
Just by considering disjoint unions of cycles, we see that there exist graphs~$H_1$ and~$H_2$ such that for the disjoint union~$H$ of~$H_1$ and~$H_2$, we have~${f(H) > f(H_1) + f(H_2)}$. 

It is natural to ask if there is a constant~$c$ such that~${f(H) \leq c(f(H_1)+f(H_2))}$. 
This would follow immediately if the following conjecture were to be proven true: 

\begin{conjecture}
    There is a constant~${\epsilon > 0}$ such that the vertex set of every graph of positive treewidth~$w$ can be partitioned into two sets each inducing a subgraph of treewidth at least~${\lfloor \epsilon w \rfloor}$. 
\end{conjecture}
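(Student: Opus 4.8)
The plan is to first replace the conjecture by the equivalent statement that there is a constant ${\epsilon>0}$ such that every graph $G$ with ${\tw(G)=w}$ contains two vertex-disjoint subgraphs $G_1$, $G_2$ with ${\tw(G_i)\ge\lfloor\epsilon w\rfloor}$ for each ${i\in\{1,2\}}$. One direction is immediate: if ${V(G)=A\cup B}$ with ${A\cap B=\emptyset}$ is a partition as in the conjecture, then $G[A]$ and $G[B]$ are vertex-disjoint and have the required treewidth. For the converse, given such $G_1$, $G_2$, set ${A:=V(G_1)}$ and ${B:=V(G)\setminus V(G_1)}$; then $G_1$ is a subgraph of $G[A]$ and $G_2$ is a subgraph of $G[B]$, so by monotonicity of treewidth under taking subgraphs both parts have treewidth at least ${\lfloor\epsilon w\rfloor}$. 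From here on I would work with the two-disjoint-subgraphs form, since that is the shape in which the relevant tools produce output.

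A weak version of this reformulation is essentially free, and it pinpoints where the real difficulty lies. By the polynomial grid-minor theorem, ${\tw(G)\ge w}$ yields a ${g\times g}$ grid minor with ${g=w^{\Omega(1)}}$, presented by pairwise-disjoint connected branch sets together with the required edges between adjacent ones. The branch sets of the first ${\lceil g/2\rceil}$ columns and those of the remaining columns then span two vertex-disjoint subgraphs of $G$, each containing a grid minor of treewidth at least ${\lfloor g/2\rfloor}$; alternatively, \cref{eplem} with ${k=2}$ already gives two vertex-disjoint subgraphs of treewidth $r$ for any fixed $r$ once $w$ is large enough. So the conjecture is precisely the upgrade of this polynomial (or fixed-constant) dependence to a linear one.

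For the linear bound I would argue through brambles, reusing the machinery developed for \cref{twistedprism1}. By \cref{duality} take a bramble $\Beta$ of order ${w+1}$, and by \cref{cyclebramble} a cycle $C$ meeting every element of $\Beta$. Fixing an edge ${f\in E(C)}$ and applying \cref{lem:pathpartition} to ${C-f}$ cuts $C$ into two arcs $A_1$, $A_2$ so that for each ${i\in\{1,2\}}$ the subbramble $\Beta_i$ of elements of $\Beta$ meeting $V(A_i)$ has order roughly ${w/2}$. One then wants to \emph{localise}: to find disjoint vertex sets ${U_1\supseteq V(A_1)}$ and ${U_2\supseteq V(A_2)}$ such that $G[U_i]$ still has treewidth ${\Omega(w)}$ for each $i$. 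The natural dichotomy, in the spirit of the proof of \cref{menger}, is on whether some set $X$ with ${\abs X<\epsilon w}$ separates $V(A_1)$ from $V(A_2)$: in the non-separated case Menger's theorem gives ${\Omega(w)}$ vertex-disjoint $A_1$--$A_2$ paths, whose $A_1$-halves and $A_2$-halves one tries to grow into the two dense parts; in the separated case $X$ is a small interface through which one should pass to a smaller instance and induct.

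The main obstacle is exactly this localisation step, and it is genuine. Subbrambles of a common bramble cannot have disjoint supports, since any two of their elements touch, so the cycle-and-path-partition argument does not by itself hand over two brambles living in disjoint parts of $G$: one must \emph{synthesise} new brambles inside $U_1$ and $U_2$ out of the spatially separated structure, while a single vertex may be indispensable to both intended parts. Bounding the number of such shared vertices, and absorbing them into one side without collapsing the treewidth of the other, is the step at which the Erd\H{o}s--P\'osa-type reasoning behind \cref{eplem} loses a polynomial factor. In effect, the conjecture asks for a linear ``two vertex-disjoint highly linked pieces'' theorem, and I expect progress to require an idea beyond the grid-minor and well-linked-set toolkit, plausibly a direct surgery on a lean tree decomposition of $G$.
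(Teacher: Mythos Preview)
The statement you are addressing is a \emph{conjecture} posed in the paper's open-problems section; the paper offers no proof, and indeed the authors present it precisely because they do not know how to prove it. So there is no ``paper's own proof'' to compare against.

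Your proposal is not a proof either, and to your credit you are explicit about this: you give a correct and useful reformulation (the equivalence with finding two vertex-disjoint subgraphs of treewidth at least $\lfloor \epsilon w\rfloor$), you note that the polynomial version follows from the grid-minor theorem or from \cref{eplem} with $k=2$, and you outline a bramble-splitting approach via \cref{cyclebramble} and \cref{lem:pathpartition}. But you then correctly identify the genuine obstacle---that subbrambles of a common bramble cannot be supported on disjoint vertex sets, so the localisation step is the entire content of the conjecture---and you conclude by saying you ``expect progress to require an idea beyond the grid-minor and well-linked-set toolkit.'' That is an honest assessment of the state of affairs, not a proof.

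In short: the paper does not prove this statement, and neither does your proposal. What you have written is a reasonable survey of why the conjecture is nontrivial and where known techniques fall short, which is exactly why the authors list it as open.
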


We ask two further questions about how~$f(H)$ grows with small changes to~$H$. 

\begin{question}
    Given an $n$-vertex planar graph~$H$ and a vertex~${v \in V(H)}$, what is the maximum possible difference between~$f(H)$ and~${f(H-v)}$? 
\end{question}

\begin{question}
    Given an $n$-vertex planar graph~$H$ and an edge~${e \in E(H)}$, what is the maximum possible difference between~$f(H)$ and~${f(H-e)}$? 
\end{question}

\bibliographystyle{amsplain}
\bibliography{treewidth}
\end{document}